\newcommand{\abs}[1]{\lvert #1\rvert}
\newcommand{\lmd}[1]{#1\text{-}\operatorname{mod}}
\newcommand{\KX}{K_0(\XX)}
\newcommand{\Add}{\operatorname{Add}}
\newcommand{\Hom}{\operatorname{Hom}} 
\newcommand{\Ext}{\operatorname{Ext}} 
\newcommand{\rk}{\operatorname{rk}}   
\newcommand{\slo}{\operatorname{slope}} 
\newcommand{\lcm}{\operatorname{lcm}}
\newcommand{\ql}{\operatorname{ql}}
\newcommand{\Iff}{\Leftrightarrow}
\newcommand{\df}{\colon}
\newcommand{\del}{\delta}
\newcommand{\lam}{\lambda}
\newcommand{\blam}{{\boldsymbol{\lambda}}}
\newcommand{\Lam}{\Lambda}
\newcommand{\vph}{\varphi}
\newcommand{\Sig}{\Sigma}
\newcommand{\be}{\mathbf{e}}
\newcommand{\bff}{\mathbf{f}}
\newcommand{\bh}{\mathbf{h}}
\newcommand{\bv}{\mathbf{v}}
\newcommand{\bp}{\mathbf{p}}
\newcommand{\base}[1]{\operatorname{B}(#1)}
\newcommand{\CC}{\mathbb{C}}
\newcommand{\scC}{\underline{\mathcal{C}}}
\newcommand{\PP}{\mathbb{P}}
\newcommand{\QQ}{\mathbb{Q}}
\newcommand{\cC}{\mathcal{C}}
\newcommand{\cD}{\mathcal{D}}
\newcommand{\cDb}{\mathcal{D}^b} 
\newcommand{\cO}{\mathcal{O}}
\newcommand{\cT}{\mathcal{T}}
\newcommand{\sD}{\mathsf{D}}
\newcommand{\sE}{\mathsf{E}}
\newcommand{\vc}{\vec{c}}
\newcommand{\vn}{\vec{w}}  
\newcommand{\vx}{\vec{x}}
\newcommand{\oM}{{\bar{M}}}
\newcommand{\tQ}{\widetilde{Q}}
\newcommand{\hQ}{\widehat{Q}}
\newcommand{\dimvec}[6]{\text{\small$\left[
    \begin{matrix}
      #1&#3&#5\\#2&#4&#6
    \end{matrix}\right]$}}
\newcommand{\euler}[1]{\langle #1\rangle}
\newcommand{\transp}{\top}
\newcommand{\ZZ}{\mathbb{Z}}
\newcommand{\NN}{\mathbb{N}}
\newcommand{\XX}{\mathbb{X}}
\newcommand{\QQi}{\mathbb{Q}_\infty}
\newcommand{\unfold}[1]{\mu_{#1}}
\newcommand{\innerfunc}[1]{#1^{(\Inner)}}
\newcommand{\innerfunckl}[1]{\left(#1\right)^{(\Inner)}}
\newcommand{\outerfunc}[1]{#1^{(\Outer)}}
\newcommand{\outerfunckl}[1]{\left(#1\right)^{(\Outer)}}
\newcommand{\iofunc}[2]{#2^{(#1)}}
\newcommand{\typ}[1]{\mathrm{t}(#1)}
\newcommand{\typc}[1]{\mathrm{t}'(#1)}
\newcommand{\dimv}{\underline{\mathrm{dim}}}
\newcommand{\Exc}{\mathcal{E}}
\newcommand{\End}{\operatorname{End}}
\newcommand{\slope}{\operatorname{slope}}
\newcommand{\coh}{\operatorname{coh}}
\newcommand{\qdist}[1]{\Delta(#1)}
\newcommand{\tAC}{\Delta^{\bowtie}}
\newcommand{\tEG}{\mathbf{E}^{\bowtie}}
\newcommand{\mSet}{\mathcal{B}}
\newcommand{\Inner}{+}
\newcommand{\Outer}{-}
\newcommand{\complexity}[1]{\gamma(#1)}
\newcommand{\Her}{\mathcal{H}}
\newcommand{\cA}{\mathcal{A}}
\newcommand{\genus}[1]{g_{#1}}
\newcommand{\surf}{\operatorname{\mathbf{S}}}
\newcommand{\marked}{\operatorname{\mathbf{M}}}
\newcommand{\HVCenter}[1]{\setbox 0=\hbox{#1}%
        \dimen0=\wd0%
        \dimen1=\ht0%
        \divide\dimen0 by 2%
        \divide\dimen1 by 2%
        \hskip -\dimen0%
        \lower \dimen1%
        \box0%
        \hskip -\dimen0}
\newcommand{\HBCenter}[1]{\setbox 0=\hbox{#1}%
        \dimen0=\wd0%
        \dimen1=\ht0%
        \divide\dimen0 by 2%
        \hskip -\dimen0%
        \box0%
        \hskip -\dimen0}
\newcommand{\HTCenter}[1]{\setbox 0=\hbox{#1}%
        \dimen0=\wd0%
        \dimen1=\ht0%
        \divide\dimen0 by 2%
        \hskip -\dimen0%
        \lower \dimen1%
        \box0%
        \hskip -\dimen0}
\newcommand{\RVCenter}[1]{\setbox 0=\hbox{#1}%
        \dimen0=\wd0%
        \dimen1=\ht0%
        \divide\dimen1 by 2%
        \hskip -\dimen0%
        \lower \dimen1%
        \box0%
        \hskip -\dimen0}
\newcommand{\RBCenter}[1]{\setbox 0=\hbox{#1}%
        \dimen0=\wd0%
        \dimen1=\ht0%
        \hskip -\dimen0%
        \box0%
        \hskip -\dimen0}
\newcommand{\RTCenter}[1]{\setbox 0=\hbox{#1}%
        \dimen0=\wd0%
        \dimen1=\ht0%
        \hskip -\dimen0%
        \lower \dimen1%
        \box0%
        \hskip -\dimen0}
\newcommand{\LVCenter}[1]{\setbox 0=\hbox{#1}%
        \dimen1=\ht0%
        \divide\dimen1 by 2%
        \lower \dimen1%
        \box0%
        \hskip -\dimen0}
\newcommand{\LTCenter}[1]{\setbox 0=\hbox{#1}%
        \dimen1=\ht0%
        \lower \dimen1%
        \box0%
        \hskip -\dimen0}
\newtheorem{theorem}{Theorem}[section]
\newtheorem{proposition}[theorem]{Proposition}
\newtheorem{corollary}[theorem]{Corollary}
\newtheorem{lemma}[theorem]{Lemma}
\theoremstyle{definition}
\newtheorem{remark}[theorem]{Remark}
\newtheorem{example}[theorem]{Example}
\newtheorem{definition}[theorem]{Definition}
\begin{document}
\title{Tubular cluster algebras I: categorification}
\author{Michael Barot}
\address{\noindent Instituto de Matem\'aticas,  Universidad Nacional Aut\'onoma de M\'exico,
Ciudad Universitaria, 04510 Mexico D.F., Mexico}
\email{barot@matem.unam.mx}

\author{Christof Geiss}
\address{\noindent Instituto de Matem\'aticas,  Universidad Nacional Aut\'onoma de M\'exico,
Ciudad Universitaria, 04510 Mexico D.F., Mexico}
\email{christof@matem.unam.mx}
\date{}


\begin{abstract}
We present a categorification of four mutation finite cluster algebras by 
the cluster category of the category of coherent sheaves over a weighted 
projective line of tubular weight type. Each of these cluster algebras which we call tubular is associated to an elliptic root system.
We show that via a cluster character the cluster variables are in bijection 
with the positive real Schur roots associated to the weighted projective line. 
In one of the four cases this is achieved by the approach to 
cluster algebras of Fomin-Shapiro-Thurston using a 2-sphere with 4 marked points whereas in the remaining cases it is done  by the approach of Geiss-Leclerc-Schr\"oer using
preprojective algebras.
\end{abstract}

\thanks{Both authors thankfully acknowledge support from grant PAPIIT IN103507.}

\maketitle


\section{Introduction}

Cluster algebras were introduced around 2001 by Fomin and 
Zelevinsky~\cite{FZ1} as a tool to study questions concerning dual canonical 
bases and total positivity. Though mainly combinatorial in their conception,
many important questions about cluster algebras were
answered  by introducing some extra structure like for example a 
``categorification''~\cite{FK08},\cite{DWZ09} or some Poisson geometric 
context~\cite{GSV03}.
 
Somehow simplifying, a cluster algebra $\cA(B)$ of rank $n$ with trivial 
coefficients is determined by the  skew symmetrizable exchange matrix 
$B\in\ZZ^{n\times n}$. If $B'$ is mutation equivalent~\cite[Def.~4.2]{FZ1} 
to $B$, the cluster algebra $\cA(B')$ is isomorphic to $\cA(B)$.

The cluster algebras we will study in this article are those which are given by
 a matrix $B$ whose associated diagram is one in Figure~\ref{fig:ellipticR} 
(the symbol close to the diagram denotes the associated elliptic root system, 
see Point (c) below). We call these four cluster algebras \emph{tubular}, 
due to their categorifaction by a tubular cluster category which we
discuss in the present paper.

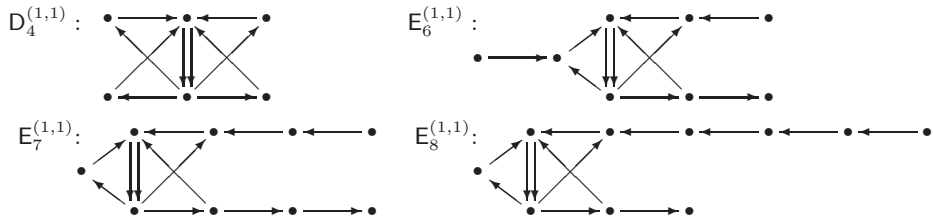
\begin{figure}[h]
\begin{center}
  \begin{picture}(300,40)
    \put(10,20){
      \put(-10,15){\RVCenter{\small $\sD_4^{(1,1)}:$}}
      \multiput(0,-15)(30,0){3}{\circle*{3}}
      \multiput(0,15)(30,0){3}{\circle*{3}}
      \multiput(26,-15)(30,30){2}{\vector(-1,0){22}}
      \multiput(4,15)(30,-30){2}{\vector(1,0){22}}
      \multiput(28.5,11)(3,0){2}{\vector(0,-1){22}}
      \multiput(3,-12)(30,0){2}{\vector(1,1){24}}
      \multiput(27,-12)(30,0){2}{\vector(-1,1){24}}
    }
    \put(150,20){
      \put(0,15){\RVCenter{\small $\sE_6^{(1,1)}:$}}
      \multiput(0,0)(30,0){2}{\circle*{3}}
      \multiput(50,-15)(0,30){2}{
      	\multiput(0,0)(30,0){3}{\circle*{3}}
      }
      \multiput(76,15)(30,0){2}{\vector(-1,0){22}}
      \multiput(54,-15)(30,0){2}{\vector(1,0){22}}
      \put(4,0){\vector(1,0){22}}
      \multiput(48.5,11)(3,0){2}{\vector(0,-1){22}}
      \put(46.4,-12.3){\vector(-4,3){12}}
      \put(53,-12){\vector(1,1){24}}
      \put(34.6,2.7){\vector(4,3){12}}
      \put(77,-12){\vector(-1,1){24}}
    }
  \end{picture}
\end{center} 
\begin{center}
  \begin{picture}(300,40)
    \put(0,20){
      \put(0,15){\RVCenter{\small $\sE_7^{(1,1)}$:}}
      \put(0,0){\circle*{3}}
      \multiput(20,-15)(30,0){4}{\circle*{3}}
      \multiput(20,15)(30,0){4}{\circle*{3}}
      \multiput(46,15)(30,0){3}{\vector(-1,0){22}}
      \multiput(24,-15)(30,0){3}{\vector(1,0){22}}
      \multiput(18.5,11)(3,0){2}{\vector(0,-1){22}}
      \put(16.4,-12.3){\vector(-4,3){12}}
      \put(23,-12){\vector(1,1){24}}
      \put(4.6,2.7){\vector(4,3){12}}
      \put(47,-12){\vector(-1,1){24}}
    }
    \put(150,20){
      \put(0,15){\RVCenter{\small $\sE_8^{(1,1)}$:}}
      \put(0,0){\circle*{3}}
      \multiput(20,-15)(30,0){3}{\circle*{3}}
      \multiput(20,15)(30,0){6}{\circle*{3}}
      \multiput(46,15)(30,0){5}{\vector(-1,0){22}}
      \multiput(24,-15)(30,0){2}{\vector(1,0){22}}
      \multiput(18.5,11)(3,0){2}{\vector(0,-1){22}}
      \put(16.4,-12.3){\vector(-4,3){12}}
      \put(23,-12){\vector(1,1){24}}
      \put(4.6,2.7){\vector(4,3){12}}
      \put(47,-12){\vector(-1,1){24}}
    }
  \end{picture}
\end{center} 
\caption{Quivers associated to some elliptic root systems}
\label{fig:ellipticR} 
\end{figure}

In~\cite{FZ2} Fomin and Zelevinsky showed that a cluster algebra
$\cA(B)$ is of finite type if and only if $B$ is mutation equivalent to 
the skew-symmetrization of a Cartan matrix of finite type. 
Moreover, they obtained in this case
a natural bijection between the cluster variables and almost positive roots in 
the corresponding root system. A broader class of cluster algebras are those
with a mutation finite exchange matrix. Each cluster algebra associated 
with the 
signed adjacency matrix of arcs of a triangulation on a (possibly bordered) 
two-dimensional surface, studied by Fomin, Shapiro and Thurston~\cite{FST}, 
belongs to this class. Note that in this case cluster variables are 
parametrized by tagged arcs on the corresponding surface.
In a recent paper by Felikson, Shapiro and 
Tumarkin~\cite{FeShTu08} it is shown that  there are
besides the above mentioned family of mutation finite exchange matrices 
only 11 further (exceptional) mutation classes of skew-symmetric exchange  
matrices of rank $\geq 3$, namely those given by: 
\begin{itemize}
\item[(a)]  
Cartan matrices of type $\sE_6$, $\sE_7$, $\sE_8$. 
The corresponding  cluster algebras are  of finite type.
\item[(b)] 
generalized Cartan matrices of type $\tilde{\sE}_6$, $\tilde{\sE}_7$, 
$\tilde{\sE}_8$. The corresponding
cluster algebras are categorified by the cluster category of a 
tame hereditary algebra of the corresponding type, and cluster variables
correspond naturally to almost positive Schur roots.
\item[(c)]
certain orientations of the diagrams describing elliptic root systems
of type $\sE_6^{(1,1)}, \sE_7^{(1,1)}$ resp.~$\sE_8^{(1,1)}$, see
Figure~\ref{fig:ellipticR} above.
\item[(d)] the Diagrams $\mathsf{X}_6$ and $\mathsf{X}_7$ recently discovered
by Derksen and Owen~\cite{DO08}.
\end{itemize}

Note also that the exchange matrix associated to the quiver of type 
$\sD_{4}^{(1,1)}$ in Figure~\ref{fig:ellipticR} is  mutation finite. In fact, 
it corresponds in the setup of~\cite{FST} to the 2-sphere with 4 punctures.

We  
give in this paper a uniform categorification of 
the cluster algebras associated to the Diagrams in 
Figure~\ref{fig:ellipticR}. To this end, 
we denote by $\coh \XX$ the category of coherent sheaves over a 
weighted projective line $\XX$ in the sense of Geigle-Lenzing \cite{GL87}, 
see also Section~\ref{sec:coh}. 
The orbit category $\cC_\XX :=\cDb(\coh\XX)/\euler{\tau^{-1}[1]}$, called the \emph{cluster category} associated to $\XX$,
of the derived category $\cDb(\coh\XX)$ is a 
triangulated 2-Calabi-Yau category \cite{Ke05}
admitting a cluster structure in the sense of \cite{BIRS}. Moreover, it
is easy to see that $\coh\XX$ and $\cC_\XX$ have the same Auslander-Reiten 
quiver. An object $X$ in $\cC_\XX$ is called \emph{rigid} if $\Ext^1_\cC(X,X)=0$.

The weighted projective line $\XX$ comes with a weight sequence 
$\bp=(p_1,\ldots,p_t)$. The complexity 
of $\coh\XX$ depends essentially on the value of the
virtual genus 
$\genus{\XX}=1+\frac{1}{2}\left((t-2)p-\sum_{i=1}^t \frac{p}{p_i}\right)$,
where $p=\lcm(p_1,\ldots,p_t)$. More precisely, $\coh\XX$ is tame if
and only if $\genus{\XX}\leq 1$. 
If $\genus{\XX}=1$ the weighted projective line $\XX$ is said to be of 
\emph{tubular type} and we call the category $\cC_\XX$ 
\emph{tubular}. 
In this case each connected component of the Auslander-Reiten 
quiver of $\cC_\XX$ is a tube. The following is the main result of this paper.

\begin{theorem}
\label{thm:main1}
  Each  cluster algebra $\cA$ of type $\sD_4^{(1,1)}, \sE_6^{(1,1)}, 
  \sE_7^{(1,1)}$ resp.~$\sE_8^{(1,1)}$ can be categorified by $\cC_\XX$, where 
  $\XX$ is of tubular type with weight sequence $\bp=(2,2,2,2)$, $(3,3,3)$, 
  $(4,4,2)$ 
  and $(6,3,2)$ respectively. More precisely, there exists a cluster character 
  in the sense of Palu \cite{Pa08}, which induces a bijective map from the set 
  of  (isomorphism classes of) indecomposable 
  objects $E$ of $\cC_\XX$ which are rigid to the set of cluster variables of 
  $\cA$ 
  and a bijection between the (isomorphism classes of) basic 
  cluster-tilting objects and  the clusters.
\end{theorem}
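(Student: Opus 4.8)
The plan is to realize the asserted cluster character as an instance of Palu's construction \cite{Pa08} attached to a well-chosen initial cluster-tilting object, and then to split the two bijections into an (essentially formal) injectivity part and a (genuinely hard) reachability part.

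\emph{Step 1: an initial seed.} Since $\coh\XX$ is hereditary, the indecomposable objects of $\cC_\XX=\cDb(\coh\XX)/\euler{\tau^{-1}[1]}$ are a fundamental domain of the indecomposable sheaves, and by the $2$-Calabi--Yau property $\Ext^1_{\cC}(X,X)\cong\Ext^1_{\coh\XX}(X,X)\oplus D\Ext^1_{\coh\XX}(X,X)$; hence the indecomposable rigid objects of $\cC_\XX$ are exactly the exceptional sheaves, i.e.\ the objects realizing the positive real Schur roots associated to $\XX$. First I would fix a canonical tilting bundle $T_0$ on $\XX$ and verify, by a direct computation of the $\Hom$- and $\Ext^1$-spaces in $\cC_\XX$, that the Gabriel quiver of $\End_{\cC_\XX}(T_0)$ is the prescribed orientation of the elliptic diagram of Figure~\ref{fig:ellipticR} for the weight sequence $\bp$ in question. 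This realizes the initial exchange matrix of $\cA$ inside $\cC_\XX$.

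\emph{Step 2: the cluster character and mutation compatibility.} With $T_0$ fixed, Palu's cluster character $X\mapsto X^{T_0}$ is defined on $\cC_\XX$ and satisfies the multiplication formula, so it intertwines mutation of cluster-tilting objects with seed mutation. Because $\cC_\XX$ carries a cluster structure in the sense of \cite{BIRS}, every cluster-tilting object can be mutated in each direction and its quiver mutates accordingly; thus the cluster character sends each cluster-tilting object reachable from $T_0$ to a cluster of $\cA$ and its indecomposable summands to cluster variables, compatibly with mutation. In particular every cluster variable of $\cA$ already lies in the image, which gives surjectivity onto the set of cluster variables. For injectivity I would invoke the general fact (Dehy--Keller) that, in a $2$-Calabi--Yau category, the index with respect to $T_0$ is a complete invariant of a rigid object and is read off from the leading monomial of its cluster character; hence distinct rigid indecomposables give distinct cluster variables, and distinct cluster-tilting objects give distinct clusters.

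\emph{Step 3: reachability, the main obstacle.} The crux is to show that \emph{every} indecomposable rigid object of $\cC_\XX$ occurs as a summand of a cluster-tilting object in the mutation class of $T_0$; without this, the cluster character of a non-reachable rigid object need not be a cluster variable and the first bijection would not even be well defined. This is hard precisely in the tubular case: since every component of the Auslander--Reiten quiver of $\cC_\XX$ is a tube and these tubes are indexed by all slopes in $\QQ\cup\{\infty\}$, one must verify that iterated mutation reaches exceptional sheaves of \emph{every} slope. Here I would bring in the two external models. For $\bp=(2,2,2,2)$ I would identify $\cA$ with the cluster algebra of the $2$-sphere with four marked points and use the Fomin--Shapiro--Thurston combinatorics of tagged arcs \cite{FST} to match cluster variables with positive real Schur roots; for $\bp=(3,3,3),(4,4,2),(6,3,2)$ I would instead use the Geiss--Leclerc--Schr\"oer realization of $\cA$ through a category of modules over a preprojective algebra to obtain the same matching. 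In each case the resulting bijection between cluster variables and positive real Schur roots, combined with the identification of rigid indecomposables with positive real Schur roots from Step~1, forces every rigid indecomposable to be reachable. Together with the injectivity of Step~2 this yields both bijections and completes the categorification.
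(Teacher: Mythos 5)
Your overall architecture coincides with the paper's: Palu's cluster character, the Fomin--Shapiro--Thurston surface model for $(2,2,2,2)$ and the Geiss--Leclerc--Schr\"oer preprojective-algebra model for the three $\sE$-types, with the two models assigned to the cases exactly as in the paper. Your injectivity argument via Dehy--Keller indices is a legitimate alternative to what the paper actually uses (linear independence of the family $(\varphi_X)$ over all rigid $X$ in the GLS setting for the $\sE$-cases, and the explicit arc dictionary for the $\sD$-case). You also correctly isolate reachability as the crux.

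The genuine gap is in your Step 3. For the $\sE$-cases the GLS categorification matches cluster variables only with the \emph{reachable} indecomposable rigid objects of $\scC_M\simeq\cC_\XX$: by construction the cluster variables are the $\varphi_X$ for $X$ an indecomposable summand of a cluster-tilting object in the mutation class of $T_M$. So ``the resulting bijection between cluster variables and positive real Schur roots'' is precisely what is not yet available, and deducing reachability from it is circular. The paper closes the loop with an independent input that your proposal never invokes: the exchange graph of cluster-tilting objects of a tubular cluster category is connected, \cite[Thm.~8.8]{BKL08}. Given this, every cluster-tilting object is reachable from $T_M$, every indecomposable rigid object completes to a cluster-tilting object, and the bijection follows from the linear independence statement. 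In the $\sD$-case a version of your plan can be carried out without that theorem, but only after constructing the full compatibility-preserving dictionary between tagged arcs and positive real Schur roots (the content of Sections 4--5 of the paper, which you treat as a black box) and after checking that flips of tagged triangulations transport to mutations of tilting sheaves, which rests on the fact that an almost complete tilting sheaf has exactly two complements \cite[Prop.~5.14]{Hueb96}; the paper itself again obtains the surjection onto cluster variables in this case from \cite[Thm.~8.8]{BKL08}. Without either the connectedness theorem or a completed arc/root dictionary, your Step 3 does not go through.
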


\begin{remark}\ \
\label{rem:main}
{\bf (a)}\ 
The parametrization of the indecomposable rigid objects in $\cC_\XX$ 
reduces to the description of positive real Schur roots in the 
Grothendieck group $\KX$ of $\coh\XX$. In the tubular case this can be
done in a closed form.
The condition for two 
indecomposable rigid objects of being Ext-orthogonal can also be translated 
into a  handy criterion in terms of Schur roots.
This facilitates a combinatorial description of the 
exchange graph and the cluster complex. 
We shall carry out these description in a forthcoming paper \cite{BG2}.

{\bf (b)}
For the proof in the D-case we produce an explicit bijection 
between positive real Schur roots and tagged arcs on the 2-sphere with 
4 punctures, which respects  the respective notions of compatibility. 
See Proposition~\ref{thm:bijection} for the precise statement.
We believe that this bijection is of independent interest.
\end{remark}

The following result is an immediate consequence of Theorem~\ref{thm:main1}.

\begin{corollary}
The cluster complex of each tubular cluster algebra $\cA$ is the 
clique complex of the compatibility relation.
\end{corollary}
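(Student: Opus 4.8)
The plan is to push everything through the bijections supplied by Theorem~\ref{thm:main1} and thereby reduce the statement to an elementary additivity property of $\Ext^1$ in the $2$-Calabi-Yau category $\cC_\XX$. Recall that the \emph{cluster complex} of $\cA$ is the simplicial complex whose vertices are the cluster variables and whose faces are the subsets of a common cluster, that two cluster variables are \emph{compatible} exactly when they lie in a common cluster, and that the \emph{clique complex} of this relation has as its faces the pairwise compatible sets of cluster variables. Thus the claim is precisely that the cluster complex is a flag complex: a set of cluster variables that is pairwise compatible must already lie in a single cluster. The inclusion ``face $\Rightarrow$ clique'' is formal and holds for any cluster complex, since two elements of a common cluster are by definition compatible; all the content lies in the reverse inclusion.

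First I would use Theorem~\ref{thm:main1} to replace cluster variables by indecomposable rigid objects of $\cC_\XX$ and clusters by basic cluster-tilting objects, so that a set of cluster variables is a face of the cluster complex if and only if the corresponding indecomposable rigid objects occur simultaneously as summands of one cluster-tilting object. Under this dictionary two cluster variables are compatible if and only if the associated indecomposables $X$, $Y$ satisfy $\Ext^1_{\cC_\XX}(X,Y)=0$: indeed, if they are summands of a common cluster-tilting object $T$, then $\Ext^1_{\cC_\XX}(T,T)=0$ forces $\Ext^1_{\cC_\XX}(X,Y)=0$, while the converse is exactly the point addressed next.

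The key step is the observation that, since $\Ext^1_{\cC_\XX}(-,-)$ is additive in each argument, a direct sum $\bigoplus_i X_i$ of indecomposable rigid objects is itself rigid if and only if $\Ext^1_{\cC_\XX}(X_i,X_j)=0$ for all $i,j$, which by the $2$-Calabi-Yau symmetry is the same as pairwise Ext-orthogonality of the summands. Hence a pairwise compatible set of indecomposable rigid objects has rigid total sum. Invoking the cluster structure on $\cC_\XX$ in the sense of \cite{BIRS}, under which every basic rigid object is a direct summand of a cluster-tilting object, such a set extends to a cluster-tilting object and is therefore a face of the cluster complex. Combining this with the trivial inclusion from the first paragraph yields the claimed equality of complexes.

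The only non-formal ingredient, and hence the main point, is this completion property: a basic rigid object can always be enlarged to a cluster-tilting object. It is exactly what the cluster structure on the $2$-Calabi-Yau category $\cC_\XX$ guarantees, and it is what converts the pairwise condition into simultaneous membership in a single cluster. Everything else is a direct consequence of the additivity of $\Ext^1$ together with the bijections of Theorem~\ref{thm:main1}, which is why the corollary is immediate.
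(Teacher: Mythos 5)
Your argument is correct and is exactly the unpacking the paper intends when it calls the corollary an ``immediate consequence'' of Theorem~\ref{thm:main1}: transport everything through the bijections, use additivity of $\Ext^1$ to turn pairwise compatibility into rigidity of the direct sum, and complete to a cluster-tilting object. The only cosmetic point is that the completion property is better sourced from Remark~\ref{rem:tilt-clutilt} (cluster-tilting objects in $\cC_\XX$ are precisely the maximal rigid objects, so any rigid object sits inside a maximal one) than from the cluster-structure axioms of \cite{BIRS} themselves.
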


In the $\sE$-cases we obtain some addtional information from the proof of
our result.
\begin{corollary}
\label{cor:1.4}
Each tubular cluster algebra $\cA$ of type $\sE_6^{(1,1)},   \sE_7^{(1,1)}$ or 
$\sE_8^{(1,1)}$ is finitely generated and the cluster 
monomials form a linearly   independent family.
\end{corollary}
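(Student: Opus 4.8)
The plan is to leverage Theorem~\ref{thm:main1}, which for the $\sE$-cases is established via the Geiss--Leclerc--Schr\"oer approach using preprojective algebras, and to extract from that machinery two standard consequences: finite generation and linear independence of cluster monomials. I would organize the argument around the cluster character itself. Since Theorem~\ref{thm:main1} provides a cluster character in the sense of Palu that restricts to a bijection between indecomposable rigid objects and cluster variables, the cluster monomials are exactly the images under this character of the rigid objects (not necessarily indecomposable) of $\cC_\XX$, with the multiplicities dictated by the direct-sum decomposition into indecomposables.

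For \emph{finite generation}, the first step is to recall that in the GLS framework the cluster algebra $\cA$ is realized inside (or identified with) a subalgebra of the coordinate ring attached to the preprojective algebra, and that the relevant construction produces a finite seed structure. The key point I would use is that mutation finiteness alone does not give finite generation, but the categorification does: because $\cC_\XX$ is tubular, every connected component of its Auslander--Reiten quiver is a tube, and the indecomposable rigid objects are parametrized by the positive real Schur roots, which in the tubular case admit a closed-form description (Remark~\ref{rem:main}(a)). I would then argue that the cluster variables lie in the image of a finitely generated algebra coming from the GLS construction; concretely, one shows $\cA$ coincides with a cluster algebra whose upper cluster algebra is finitely generated and equals $\cA$ itself, which in the GLS setting follows from the identification with a finitely generated subalgebra of functions on the associated unipotent cell.

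For \emph{linear independence} of the cluster monomials, the central tool is again Palu's cluster character: the character sends distinct rigid objects to distinct Laurent monomials in a fixed initial cluster (after expansion), and the leading terms, controlled by the $g$-vectors or the dimension vectors of the corresponding modules, are all distinct. The step I would carry out is to show that the $g$-vectors (equivalently, the indices in $\KX$) of the indecomposable rigid objects are linearly independent enough to separate cluster monomials: distinct cluster monomials have distinct $g$-vectors, and a family of Laurent polynomials whose leading terms are distinct monomials is linearly independent. This is the standard argument that the cluster character is multiplicative on direct sums and that its values are determined by the index, so that the $g$-vector of a cluster monomial recovers the multiset of indecomposable summands of the underlying rigid object.

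The main obstacle I expect is verifying that the GLS-type categorification genuinely yields these two properties in the present tubular (hence tame but not finite-type) setting, rather than merely asserting them by analogy with finite type. Concretely, the delicate point is that linear independence of cluster monomials is known in general for skew-symmetric cluster algebras admitting a 2-Calabi--Yau categorification with a cluster character, but one must confirm that the injectivity of the $g$-vector map (the separation of cluster monomials by their $g$-vectors) holds here; this is exactly where the bijection between rigid objects and positive real Schur roots, together with the tubular structure, is used to rule out coincidences. For finite generation, the subtlety is that the four elliptic root systems give rise to cluster algebras of infinite type, so finite generation is not automatic and must be deduced from the specific realization via preprojective algebras, where the coordinate-ring description supplies an explicit finite set of generators. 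I would therefore present the proof as a reduction to the established GLS results, citing the construction in the body of the paper that produces the cluster character, and then invoke the standard $g$-vector/index argument to conclude linear independence.
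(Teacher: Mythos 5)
Your overall strategy coincides with the paper's: the corollary is obtained by identifying each tubular cluster algebra of type $\sE$ with one of the algebras $A_M$ constructed in Section~\ref{ssec:TubE} from a terminal $\CC Q$-module $M$, and then invoking the theorem quoted in Section~\ref{ssec:GLS}, whose parts (b)(i) and (b)(ii) assert precisely that $A_M$ is finitely generated and that the family $(\vph_X)$, for $X$ rigid in $\scC_M$, is linearly independent. The paper stops there -- the proof is purely this reduction plus citation of \cite{GLS08}. Where you diverge is in how you propose to justify the two cited properties. For finite generation, the mechanism in \cite{GLS08} is the explicit presentation of $A_M$ as the algebra generated by the finitely many cluster characters $\vph_{M_1},\dots,\vph_{M_r}$ (equivalently its identification with a coordinate ring of a unipotent cell); your upper-cluster-algebra phrasing is in that spirit but is not needed once the citation is made. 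For linear independence, you propose the $g$-vector/index separation argument (distinct rigid objects have distinct indices, hence the expansions have distinct leading terms); this is a genuinely different route from the one underlying the cited result, which rests on the fact that the $\vph_X$ for rigid $X$ are elements of the dual semicanonical basis and hence linearly independent for structural reasons, with no leading-term analysis. Your route is viable in this 2-Calabi--Yau setting (it requires the Dehy--Keller result that rigid objects are determined by their indices, which you correctly flag as the delicate input), and it has the advantage of not depending on semicanonical bases; the paper's route has the advantage of giving linear independence of \emph{all} $\vph_X$ for rigid $X$ at once, with no case analysis. Also note a small point of hygiene: cluster monomials correspond to $\vph_X$ for rigid $X$ whose summands lie in a common cluster-tilting object; since in $\cC_\XX$ every rigid object completes to a cluster-tilting object, your identification of cluster monomials with images of arbitrary rigid objects is harmless here, but it is worth saying explicitly.
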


In fact we show in section \ref{ssec:TubE} that these cluster algebras are instances of the cluster algebras categorified in~\cite{GLS08}, see also~\cite{GLS10}. 
All cluster algebras of this kind have the
above mentioned properties, see Section~\ref{ssec:GLS}.


\section{Coherent sheaves over weighted projective lines}
\label{sec:coh}
\subsection{Basic Notions}
Let $k$ be an algebraically closed field and denote by $\PP^1$ the projective
line over $k$. Let $\blam=(\lam_1,\ldots,\lam_t)$ be a collection of
pairwise different points of $\PP^1$ and $\bp=(p_1,\ldots,p_t)$ a weight
sequence with $p_i\in\NN_{\geq 2}$. Then the tuple
$\XX=(\PP^1,\blam,\bp)$ is called a weighted projective line.
The weight function is defined by

\[
p_\lam\df\PP^1\rightarrow\NN,\mu\mapsto\begin{cases}
p_i &\text{if } \mu=\lam_i \text{ for some } i\\
1   &\text{else.}\end{cases} 
\]
For later use we set $p:=\lcm(p_1,\ldots,p_t)$.

Geigle and Lenzing~\cite{GL87} associate to $\XX$ a category of coherent 
sheaves as follows: Let $L(\bp)$ be the rank 1 additive group
\[
L(\bp):= \euler{\vx_1,\ldots,\vx_t\mid p_1\vx_1=\cdots=p_t\vx_t=\vc}
\]
and $S(\bp,\blam)$ the $L(\bp)$-graded commutative algebra
\[
S(\bp,\blam) = k[u,v,x_1,\ldots,x_t]/
(x_i^{p_i}-\lam'_iu-\lam''_iv\mid i=1,\ldots,t) 
\]
where $\deg x_i=\vx_i$ and $\lam_i=[\lam'_i:\lam''_i]\in\PP^1$.
Now, $\coh\XX$ is the quotient category of finitely generated 
$L(\bp)$-graded $S(\bp,\blam)$-modules by the Serre subcategory of
finite length modules.

Geigle and Lenzing showed that $\coh\XX$ is a hereditary abelian category with
finite dimensional $\Hom$ and $\Ext$ spaces. The free module gives a structure
sheaf $\cO$, and shifting the grading gives twists $E(\vx)$ for any sheaf
$E\in\coh\XX$ and $\vx\in L(\bp)$. Moreover, they showed that, putting
$\vn := \sum_{i=1}^t(\vc-\vx_i)-2\vc$  the following version of
Serre duality
\[
D\Ext^1_\XX(E,F)\cong\Hom_\XX(F,E(\vn))
\]
holds. As a consequence, $\coh\XX$ has Auslander-Reiten sequences with the
shift $E\mapsto E(\vn)$ acting as Auslander-Reiten translation.


Every sheaf is a direct sum of a `vector bundle' which has a filtration
with factors of the form $\cO(\vx)$, and a sheaf of finite length. The 
indecomposable sheaves of finite length are readily described:
For each $\mu\in\PP^1\setminus\blam$ there is a unique simple sheaf $S_\mu$
`concentrated at $\mu$' and for $\lam_i\in\PP^1$ there are simple sheaves
$S_{i,1},\ldots, S_{i,p_i}$ `concentrated at $\lam_i$' and the only non-trivial 
extension between them are
\[
\Ext^1_\XX(S_\mu,S_\mu)\cong k \text{ and } \Ext^1_\XX(S_{i,j},S_{i,j'})\cong k
\text{ if } j-j'\equiv 1\pmod{p_i}.
\]
As a consequence, for each simple sheaf $S$ and $l\in\NN$ there exists a unique
indecomposable sheaf $S^{(l)}$ of length $l$ and socle $S$, and up to 
isomorphism
all indecomposable sheaves of finite length are of this form. Summarizing:

\begin{proposition} \label{coh:prp0} 
The category $\coh_0\XX$ of sheaves of finite length is an exact abelian, 
uniserial subcategory of $\coh\XX$ which is stable under Auslander-Reiten 
translation. The components of the Auslander-Reiten quiver of $\coh_0\XX$
form a family of standard tubes $(\cT_\mu)_{\mu\in\PP^1}$ with rank
$\rk\cT_\mu=p_\lam(\mu)$, see \cite{Ri84} for definitions.
\end{proposition}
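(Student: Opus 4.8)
The plan is to reduce the statement to the local structure at each point $\mu\in\PP^1$, almost all of which has already been recorded in the preceding paragraphs (the simples, their extensions, and the classification of the indecomposables as the objects $S^{(l)}$), and then to invoke the standard theory of tubes from \cite{Ri84}.

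First I would check that $\coh_0\XX$ is a Serre subcategory of $\coh\XX$. Since $\coh\XX$ is abelian and length is additive on short exact sequences, the class of finite length sheaves is closed under subobjects, quotients and extensions; hence $\coh_0\XX$ is an abelian subcategory on which the inclusion is exact. Next I would prove the block decomposition $\coh_0\XX=\coprod_{\mu\in\PP^1}\cT_\mu$, where $\cT_\mu$ denotes the full subcategory of sheaves all of whose composition factors are concentrated at $\mu$. The input here is that $\Hom_\XX(S,S')=0$ for non-isomorphic simples (automatic, as a nonzero map of simples is an isomorphism), and that the list of non-trivial $\Ext^1$'s between simples involves only simples concentrated at the same point. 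A dévissage from the simple case then shows $\Ext^1_\XX$ vanishes between any two finite length sheaves supported at distinct points, so every object of $\coh_0\XX$ splits according to the support of its composition factors.

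I would then identify each block. For $\mu\notin\blam$ there is a single simple $S_\mu$ with $\Ext^1_\XX(S_\mu,S_\mu)\cong k$, so $\cT_\mu$ is equivalent to the finite length nilpotent representations of the one-loop quiver, a uniserial category whose AR quiver is a homogeneous (rank $1$) standard tube. For $\mu=\lam_i$ the $p_i$ simples $S_{i,1},\ldots,S_{i,p_i}$ together with the cyclic pattern $\Ext^1_\XX(S_{i,j},S_{i,j'})\cong k\iff j-j'\equiv 1\pmod{p_i}$ exhibit $\cT_{\lam_i}$ as the nilpotent representations of the cyclic quiver on $p_i$ vertices, which by \cite{Ri84} is uniserial with a standard tube of rank $p_i$ as AR quiver. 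In both cases uniseriality is also visible directly from the already established fact that each indecomposable is the unique $S^{(l)}$ with prescribed socle and length, whose subobject lattice is the chain $S^{(1)}\subset S^{(2)}\subset\cdots\subset S^{(l)}$.

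Finally I would treat AR-translation stability and the rank. The translation is the twist $E\mapsto E(\vn)$, an exact autoequivalence that preserves length and permutes the simples, hence restricts to $\coh_0\XX$ and preserves each block $\cT_\mu$. Reading off the almost split sequence $0\to\tau S\to E\to S\to 0$ as a non-split class in $\Ext^1_\XX(S,\tau S)$ and applying the extension formula gives $\tau S_\mu=S_\mu$ and $\tau S_{i,j}=S_{i,j-1}$, so $\tau$ acts with period $p_\lam(\mu)$ on the simples of $\cT_\mu$, which is precisely $\rk\cT_\mu$. The only step requiring genuine argument rather than bookkeeping is the block decomposition: verifying that no morphisms or extensions connect sheaves concentrated at different points, so that the category really splits as the claimed coproduct. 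Everything else is either a formal consequence of finiteness of length or a direct appeal to the classification of tubes over cyclic quivers in \cite{Ri84}.
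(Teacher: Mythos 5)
Your argument is correct and follows exactly the route the paper intends: the proposition is stated as a summary (``Resuming:'') of the immediately preceding facts about simples, their extensions, and the indecomposables $S^{(l)}$, with the tube formalism delegated to \cite{Ri84}. Your block decomposition by support, the identification of each block with nilpotent representations of a cyclic quiver, and the computation of the $\tau$-period of the simples are precisely the details the paper leaves implicit, so nothing further is needed.
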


\subsection{Discrete invariants}
The Grothendieck group $\KX$ of $\coh\XX$ is a free abelian group of 
rank $n={2+\sum_{i=1}^t(p_i-1)}$. 
Since $\coh\XX$ is hereditary, the 
homological form 
\[
\euler{E,F} = \dim\Hom_\XX(E,F)-\dim\Ext^1_\XX(E,F)
\]
descends to an integral bilinear form on $\KX$. For the same reason,
the Auslander-Reiten translate induces a linear transformation $\tau$ on
$\KX$ such that $[E(\vn)]=\tau[E]$ for all $E\in\coh\XX$. 
From Serre duality follows
\[
\euler{\tau\be,\tau\bff}=\euler{\be,\bff}=-\euler{\bff,\tau\be}
\]
for all $\be,\bff\in \KX$. Thus $\tau\be=\be$ implies $\euler{\be,\be}=0$.

There exists $\bh_\infty\in\KX$ such that $\bh_\infty=[S_\mu]$ for all 
$\mu\in\PP^1\setminus\blam$. We define
the \emph{rank} of a sheaf $E$ by
$$
\rk E=\euler{[E],\bh_\infty}.
$$
It is easy to see that
$\rk \cO(\vx)=1$ for all $\vx\in L(\bp)$ and $\rk S=0$ for all simple sheaves.

Next, putting $\bh_0 :=\sum_{k=0}^{p-1}[\cO(k\vn)]$ we may define the 
\emph{degree} of $E$ as
\[
\deg E =\euler{\bh_0,[E]}-(\rk E)\euler{\bh_0,[\cO]},
\]
and it turns out that
$\deg S=p/p_\lam(\mu)$ if $S$ is a simple concentrated at $\mu\in\PP^1$,
and $\deg\cO(\vx)=\del(\vx)$ with
$\del\df L(\bp)\rightarrow \ZZ$ defined by $\del(\vx_i)=p/p_i$.

Clearly, rank and degree are linear functionals on $\KX$. 
Thus, we may define for $\be\in\KX$ 
\[
0\prec\be :\Iff \rk\be>0 \text{ or } (\rk\be=0 \text{ and } \deg\be>0). 
\]
This converts $\KX$ into an  ordered group and we say  $\be$ is
{\em positive} if $0\prec\be$.
Note that for each $E\in\coh\XX$ we
have $0\prec [E]$.

\begin{definition} Let $\XX$ be a  weighted projective line. We say
that $\be\in \KX$ is a {\em positive root} if there exists an 
{\em indecomposable} $E\in\coh\XX$ with $[E]=\be$. Such a root 
is called a {\em Schur root} if $E$ can be chosen such that 
$\End_\XX(E)\cong k$, it is called {\em real} if $\euler{\be,\be}=1$ and
{\em isotropic} if $\euler{\be,\be}=0$.
\end{definition}

Recall that a sheaf $E\in\coh\XX$ is called \emph{rigid} if
$\Ext^1_\XX(E,E)=0$. Remarkably, we have the following alternative
characterization of Schur roots~\cite[Prop.~4.4.1]{Mel04}.

\begin{proposition}
\label{prop:real-pos-Schur}
The map $E\mapsto [E]$ induces a bijection between
the isomorphism classes of indecomposable rigid sheaves
and the real positive Schur roots.
\end{proposition}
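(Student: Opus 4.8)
The plan is to treat well-definedness, surjectivity, and injectivity separately, the last being the crux.

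\textbf{Well-definedness and the brick property.} First I would record the standard heredity lemma (Happel--Ringel): if $X,Y$ are indecomposable and $\Ext^1_\XX(Y,X)=0$, then every nonzero morphism $X\to Y$ is a monomorphism or an epimorphism. Applying it with $X=Y=E$ for an indecomposable rigid $E$ shows that every nonzero endomorphism is mono or epi, hence (finite length over the algebraically closed $k$) an isomorphism; thus $\End_\XX(E)\cong k$. Consequently $\euler{[E],[E]}=\dim\End_\XX(E)-\dim\Ext^1_\XX(E,E)=1$, and since $0\prec[X]$ for every nonzero sheaf $X$, the class $[E]$ is a real positive Schur root. So $E\mapsto[E]$ does land in the set of real positive Schur roots. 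For surjectivity, let $\beta$ be such a root: by definition there is an indecomposable $E$ with $[E]=\beta$ and $\End_\XX(E)\cong k$, and then $1=\euler{\beta,\beta}=1-\dim\Ext^1_\XX(E,E)$ forces $\Ext^1_\XX(E,E)=0$, i.e.\ the chosen brick is automatically rigid. Hence $\beta$ is in the image.

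\textbf{Reducing injectivity to one case.} Suppose $E,F$ are indecomposable rigid with $[E]=[F]=\beta$; by the above both are bricks and $\euler{\beta,\beta}=1$. From $\dim\Hom_\XX(E,F)=\euler{[E],[F]}+\dim\Ext^1_\XX(E,F)=1+\dim\Ext^1_\XX(E,F)\geq 1$ we get $\Hom_\XX(E,F)\neq 0$, and symmetrically $\Hom_\XX(F,E)\neq 0$. If some composite $g\circ f$ with $f\colon E\to F$ and $g\colon F\to E$ is nonzero, it is an isomorphism of the brick $E$, so $f$ is a split monomorphism and $E\cong F$ by indecomposability (likewise for $f\circ g$); so we may assume all $E$--$F$ composites vanish. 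If moreover $\Ext^1_\XX(F,E)=0$, then Happel--Ringel makes a nonzero $f\colon E\to F$ mono or epi, and since $[E]=[F]$ the cokernel (resp.\ kernel) has class $0$, hence vanishes (again using $0\prec[\,\cdot\,]$ on nonzero sheaves), so $f$ is an isomorphism; the case $\Ext^1_\XX(E,F)=0$ is symmetric. This leaves exactly the configuration
\[
\Ext^1_\XX(E,F)\neq 0\quad\text{and}\quad\Ext^1_\XX(F,E)\neq 0,
\]
with all $E$--$F$ composites zero.

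\textbf{The obstacle.} Excluding this last configuration is the hard part. It is precisely the situation where $E\oplus F$ fails to be rigid, so the two objects cannot be completed to a common tilting object and no purely formal $\Hom$/$\Ext$ count can finish the job. I would resolve it by the geometric principle behind uniqueness of exceptional objects: a rigid object is the \emph{unique} indecomposable of its class because its automorphism orbit is open and dense in the irreducible variety of objects with that class, and an irreducible variety has at most one open orbit. Concretely I would transport the problem to a finite-dimensional algebra via a tilting bundle $T$ with $A=\End_\XX(T)$ canonical and the derived equivalence $\cDb(\coh\XX)\simeq\cDb(\operatorname{mod} A)$, using that in a hereditary category every indecomposable of the derived category is a shift of a sheaf, so that the two classes can be compared inside a representation variety; alternatively one appeals directly to Schofield's theory of real Schur roots, or runs an induction using universal extensions along $\Ext^1_\XX(E,F)$ to reduce $\euler{\beta,\beta}$-type data. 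This final exclusion is the one genuinely nontrivial step, and it is carried out in \cite{Mel04}.
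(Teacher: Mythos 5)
The paper does not actually prove this proposition: it quotes it directly from \cite[Prop.~4.4.1]{Mel04} with no argument. Measured against that, your proposal does strictly more work, and the parts you do carry out are correct: the Happel--Ringel heredity lemma gives $\End_\XX(E)\cong k$ for indecomposable rigid $E$, hence $\euler{[E],[E]}=1$ and well-definedness; surjectivity is the immediate converse computation; and your reduction of injectivity (using $\euler{[E],[F]}=1$ to force nonzero morphisms both ways, then disposing of the cases where a composite is nonzero or where one of the two $\Ext^1$-groups vanishes, via the vanishing of kernels and cokernels of class $0$) is clean and correct.

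The one genuine gap is exactly where you say it is: excluding the configuration $\Ext^1_\XX(E,F)\neq 0\neq\Ext^1_\XX(F,E)$ with all composites zero. None of the three strategies you sketch works as stated. The open-orbit argument does not transport readily: the derived equivalence $\mathbf{R}\Hom_\XX(T,-)$ with the canonical algebra $A=\End_\XX(T)$ (which has global dimension $2$, not $1$) sends a sheaf to a two-term complex in general, not to a module, so the two indecomposables need not land in a common irreducible representation variety; and Schofield's theory of real Schur roots is a statement about quivers, not about $\coh\XX$. Note also that the proposition is stated for arbitrary weight type, so tubular-specific arguments (tubes, quasi-lengths) are not available. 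The actual proof in \cite{Mel04} (going back to H\"ubner) is an induction on rank using mutations of exceptional pairs, i.e.\ it shows an exceptional sheaf is determined by its class by reducing to exceptional sheaves of smaller rank. Since you explicitly flag the step and cite the same source the paper does, your write-up is consistent with the paper's treatment; just be aware that the ``alternatives'' you list are not drop-in replacements for that reference.
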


\subsection{Stability} Let $\QQi=\mathbb{Q}\cup\{\infty\}$.
For each positive $\be\in\KX$  we define 
its \emph{slope} by $\slo(\be)=\frac{\deg\be}{\rk\be}\in\QQi$. If $E\in\coh\XX$ we write
$\slo(E)=\slo([E])$.

We say that $E$ es \emph{stable} (resp.~\emph{semistable}) if for each non-trivial subbundle
$E'$ of $E$ we have $\slo(E')<\slo(E)$ (resp.~$\slo(E')\leq\slo(E)$).
We have the following result~\cite[Prop.~5.2]{GL87}:

\begin{proposition} \label{coh:prp-stab}
For each $q\in\QQi$ let $\cC_q$ be the subcategory of $\coh\XX$ consisting
of all semistable coherent sheaves of slope $q$. In particular, $\cC_\infty$
is the subcategory of finite length sheaves.  Then the following holds:
\begin{itemize}
\item[(a)]
$\cC_q$ is an extension closed exact abelian finite length subcategory of
$\coh\XX$ with the simple objects being precisely the stable vector bundles.
\item[(b)]
$\Hom_\XX(\cC_q,\cC_{q'})=0$ if $q'<q$.
 \end{itemize}
\end{proposition}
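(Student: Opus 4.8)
The plan is to establish Proposition~\ref{coh:prp-stab} as a standard Harder-Narasimhan/stability package for the hereditary abelian category $\coh\XX$, reducing everything to the behaviour of $\slope$ on short exact sequences. First I would record the key linearity fact: since $\rk$ and $\deg$ are linear functionals on $\KX$, for any short exact sequence $0\to E'\to E\to E''\to 0$ in $\coh\XX$ we have additivity $\rk E=\rk E'+\rk E''$ and $\deg E=\deg E'+\deg E''$. From this and the definition of $\prec$, one checks the classical ``interpolation'' property of slopes: if all three terms are positive, then $\slope(E)$ lies (weakly) between $\slope(E')$ and $\slope(E'')$, and more precisely $\slope(E')<\slope(E)\iff\slope(E)<\slope(E'')$ (with the analogous equivalences for $\le$ and $=$). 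This elementary lemma is the engine for both (a) and (b).

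For part (a), I would first argue that $\cC_q$ is closed under extensions: given semistable $E',E''$ of slope $q$ and an extension $E$, any subbundle $E_1\subseteq E$ fits into a sequence relating it to subbundles of $E'$ and $E''$, and the slope-interpolation lemma forces $\slope(E_1)\le q$, so $E$ is semistable of slope $q$. Closure under kernels and cokernels of morphisms in $\cC_q$ follows because a nonzero map between semistables of equal slope has image a subbundle of slope $\ge q$ in the target and $\le q$ in the source, hence exactly $q$ and semistable; this makes $\cC_q$ abelian, with the inclusion into $\coh\XX$ exact. That each object has finite length in $\cC_q$ is inherited from the Harder-Narasimhan/Jordan-H\"older theory: there is no infinite strictly descending chain of subbundles of the same slope because ranks and degrees are bounded. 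Finally, the simple objects of $\cC_q$ are exactly the objects with no proper subobject of slope $q$, which is precisely the definition of \emph{stable} of slope $q$; and these are vector bundles whenever $q\ne\infty$, the finite-length case giving $\cC_\infty=\coh_0\XX$ as already noted via Proposition~\ref{coh:prp0}.

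For part (b), suppose $q'<q$ and let $f\df E\to F$ be a nonzero morphism with $E\in\cC_q$, $F\in\cC_{q'}$. Let $I=\operatorname{im}f$, so $I$ is a quotient of $E$ and a subobject of $F$. Semistability of $E$ gives $\slope(I)\ge\slope(E)=q$ (a nonzero quotient of a semistable object has slope at least the slope of the object, by applying the subbundle condition to the kernel and the interpolation lemma), while semistability of $F$ gives $\slope(I)\le\slope(F)=q'$. Hence $q\le q'$, contradicting $q'<q$; therefore $f=0$.

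The main obstacle I anticipate is not the slope combinatorics, which is formal, but the \emph{finiteness} claims in (a): that $\cC_q$ is a genuine finite-length (length-finite) abelian category rather than merely extension-closed. Controlling this requires knowing that the possible $(\rk,\deg)$ values of subbundles of a fixed $E$ are discrete and bounded, and that subobjects of $E$ in $\cC_q$ are automatically subbundles (saturated) so that one cannot refine indefinitely. Since $\coh\XX$ is hereditary with finite-dimensional $\Hom$ and $\Ext$ and $\KX$ is a lattice on which $\rk,\deg$ take integer values, this boundedness is available, but it is the step where I would lean most heavily on the Geigle-Lenzing structure theory rather than on purely formal slope arguments; indeed the cleanest route is simply to invoke \cite[Prop.~5.2]{GL87} for the finiteness while supplying the slope lemmas above for the homological vanishing in (b).
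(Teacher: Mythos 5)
The paper does not prove this statement at all: it is quoted verbatim from Geigle--Lenzing, \cite[Prop.~5.2]{GL87}, so there is no internal argument to compare against. Your sketch supplies the standard proof that the citation outsources, and it is essentially sound: the mediant/interpolation property of $\slope$ on short exact sequences, extension-closure and abelianness of $\cC_q$ via images of morphisms between semistables of equal slope, and the $\Hom$-vanishing in (b) by squeezing the slope of the image. Two small refinements are worth making. First, in (b) the image $I=\operatorname{im}f$ need not be a \emph{subbundle} (saturated subsheaf) of $F$ as required by the paper's definition of semistability; you should pass to the saturation $\bar I\subseteq F$, noting $\slope(I)\leq\slope(\bar I)$ because $\bar I/I$ has rank $0$ and nonnegative degree, and dually note that any subsheaf of a vector bundle on $\XX$ is itself a vector bundle (hereditariness), so the kernel of $E\twoheadrightarrow I$ is a legitimate subbundle. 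Second, the finiteness issue you flag as the main obstacle is in fact immediate and needs no appeal to boundedness of $(\rk,\deg)$: for finite $q$ every nonzero object of $\cC_q$ has $\rk>0$ (a positive class of rank $0$ has slope $\infty$), so along any strictly descending chain in $\cC_q$ the ranks strictly decrease, and for $q=\infty$ the category $\cC_\infty=\coh_0\XX$ is of finite length by Proposition~\ref{coh:prp0}. With those two points patched, your argument is a complete proof and not merely a reduction to \cite{GL87}.
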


\subsection{Tubular weighted projective lines}
The complexity of the classification of indecomposables in $\coh\XX$ depends
essentially on the value of the virtual genus
$\genus{\XX}=1+\frac{1}{2}\left((t-2)p-\sum_{i=1}^t \frac{p}{p_i}\right)$,
If $\genus\XX\leq 1$
the category
$\coh\XX$ is derived equivalent to the module category of a tame hereditary 
algebra. For $\genus{\XX}>1$ the classification problem is known to be wild.
It is elementary to see that $\genus{\XX}=1$ if and only if
\[
\bp\in\{(6,3,2),\ (4,4,2),\ (3,3,3),\ (2,2,2,2)\}.
\]
In this case $\XX$ is called \emph{tubular}. We have then
$p\vn=0$ so that $\euler{\bh_0,[\cO]}=0$ and the degree simplifies to
$\deg\be=\euler{\bh_0,\be}$. The type $(2,2,2,2)$
(resp. $(3,3,3)$, $(4,4,2)$ and $(6,3,2)$) is closely related to the
Dynkin diagram $\sD_4$ (resp. $\sE_6$, $\sE_7$ and $\sE_8$), see
Remark \ref{rem:related-type}, and therefore we shall say that $\XX$ is of
\emph{$\sD$-type} (resp. of \emph{$\sE$-type}).

Lenzing and his collaborators~\cite[Thm.~5.6]{GL87}
and~\cite[Thm.~4.6]{LM93},  showed the following fundamental result:
\begin{theorem} \label{coh:thm-tub}
Let $\XX$ be a tubular weighted projective line. Then:
\begin{itemize}
\item[(a)] 
For any $q\in\QQi$ the subcategory $\cC_q\subset\coh\XX$ 
(see Proposition~\ref{coh:prp-stab}) is stable under Auslander-Reiten 
translation and it is equivalent to $\cC_\infty$.
\item[(b)]
If $E\in\coh\XX$ is indecomposable then $E\in\cC_q$ for some $q\in\QQi$ and
$[E]\in \KX$ is a positive real or isotropic positive root.
\item[(c)] If $0\prec \be\in\KX$ and $\euler{\be,\be}\in\{0,1\}$ then 
$\be$ is a real or isotropic root.
\item[(d)] 
If $\be\in \KX$ is a positive real root there exists up to isomorphism
a unique indecomposable $E\in\coh\XX$ with $[E]=\be$. If $\bff\in \KX$
is a positive isotropic root, there exists a $\PP^1$-family of indecomposable
sheaves $(F_\mu)_{\mu\in\PP^1}$ with $[F_\mu]=\bff$.
\end{itemize}
\end{theorem}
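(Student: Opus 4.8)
The plan is to exploit the large group of exact self-equivalences of $\cDb(\coh\XX)$ available in the tubular case. These act on slopes through the natural action of $\mathrm{SL}_2(\ZZ)$ on $\QQi$, and they allow one to transport every question to the already-understood finite length category $\cC_\infty=\coh_0\XX$ of Proposition~\ref{coh:prp0}.

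\textbf{Autoequivalences and transitivity.} First I would assemble the group $G$ of exact autoequivalences of $\cDb(\coh\XX)$ generated by the grading twists $E\mapsto E(\vx)$ (in particular $\tau=(\vn)$ and the shift $[1]$) together with the tubular mutations of Lenzing--Meltzer~\cite{LM93}. Each $F\in G$ acts $\ZZ$-linearly on $\KX$, preserves the form $\euler{-,-}$, and transforms the pair of functionals $(\rk,\deg)$ by an $\mathrm{SL}_2(\ZZ)$-matrix; hence $F$ induces a M\"obius transformation $\phi_F$ of $\QQi$ with $\slo(F\be)=\phi_F(\slo\be)$. Two points matter here: (i) the resulting map $G\to\mathrm{SL}_2(\ZZ)$ is surjective, which rests on the tubular identities $p\vn=0$ and $\del(\vn)=0$ (so the twists act unipotently on $(\rk,\deg)$) while a tubular mutation supplies the remaining generator; and (ii) after composing with $[1]$ if necessary, $F$ may be taken to restrict to an exact equivalence $\coh\XX\to\coh\XX$ carrying $\cC_q$ to $\cC_{\phi_F(q)}$, since such functors preserve subbundles, slopes and hence semistability. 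As $\mathrm{SL}_2(\ZZ)$ acts transitively on $\QQi$, any slope can be moved to $\infty$.

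\textbf{Part (a).} Stability of $\cC_q$ under $\tau$ is immediate: $\tau=(\vn)$ preserves rank and, because $\del(\vn)=0$, also degree, hence slope, and being an exact autoequivalence it preserves subbundles, so it preserves semistability of slope $q$. For the equivalence $\cC_q\simeq\cC_\infty$ I would pick $F\in G$ with $\phi_F(q)=\infty$, which by the previous paragraph restricts to an exact equivalence $\cC_q\to\cC_\infty$ of abelian subcategories.

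\textbf{Parts (b)--(d).} The crux is the claim in (b) that every indecomposable $E$ is semistable, i.e.\ lies in some $\cC_q$. I would argue by a Harder--Narasimhan/induction argument: using a tubular mutation to normalise the maximal HN-slope and inducting on the rank, one shows that a non-trivial HN filtration forces a direct decomposition, contradicting indecomposability; the vanishing $\Hom_\XX(\cC_q,\cC_{q'})=0$ for $q'<q$ from Proposition~\ref{coh:prp-stab} is exactly what rigidifies the filtration. Granting this, $E$ lies in some $\cC_q\simeq\cC_\infty$, a union of standard tubes, so $E$ is an indecomposable of quasi-length $\ell$ in a tube of rank $r$; then $\euler{[E],[E]}=1$ if $r\nmid\ell$ and $\euler{[E],[E]}=0$ if $r\mid\ell$, which yields the real-or-isotropic dichotomy of (b). For (c), given positive $\be$ with $\euler{\be,\be}\in\{0,1\}$, I would apply $F\in G$ sending $\slo(\be)$ to $\infty$; since $F$ preserves the form and (restricting to $\coh\XX$) positivity, this reduces the statement to $\coh_0\XX$, where the positive classes with $\euler{-,-}\in\{0,1\}$ are precisely the real and isotropic roots of the affine root systems of the exceptional tubes $\cT_{\lam_i}$, a classical computation. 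Part (d) follows the same route: at slope $\infty$ a real root of a tube determines its quasi-socle and quasi-length uniquely, giving a unique indecomposable, while the minimal isotropic class $\bh_\infty$ is realised by exactly one indecomposable in each tube $\cT_\mu$, producing the $\PP^1$-family indexed by $\mu\in\PP^1$; transporting back along $F^{-1}$ gives the assertions for arbitrary $\be$ and $\bff$.

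\textbf{Main obstacle.} The principal difficulty is constructing the tubular mutations and verifying that $G$ surjects onto $\mathrm{SL}_2(\ZZ)$ with the stated action on $(\rk,\deg)$, together with the semistability claim opening part (b); once these are in hand, the remaining assertions are transport of structure along $G$ combined with the standard combinatorics of tubes.
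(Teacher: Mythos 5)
This theorem is not proved in the paper at all: it is imported verbatim from the literature, with the text attributing it to \cite[Thm.~5.6]{GL87} and \cite[Thm.~4.6]{LM93}, so there is no in-paper proof to compare against. Your sketch is essentially a reconstruction of the Lenzing--Meltzer argument from those references: build the group of autoequivalences from grading twists and tubular mutations, observe that it acts on slopes through $\mathrm{SL}_2(\ZZ)$ transitively on $\QQi$, and transport everything to $\cC_\infty=\coh_0\XX$, where the tube combinatorics give (b)--(d). Two small points where the sketch can be tightened: for the semistability claim in (b) no induction on rank or prior mutation is needed --- if $0=E_0\subset\cdots\subset E_r=E$ is the HN filtration with bottom factor $F_r$ of minimal slope, then $\Ext^1_\XX(F_r,E_{r-1})\cong D\Hom_\XX(E_{r-1},F_r(\vn))=0$ because $\vn$ preserves slope in the tubular case and $\Hom_\XX(\cC_q,\cC_{q'})=0$ for $q'<q$, so the filtration splits in one step; and in (d) the class $\bh_\infty$ is realized by \emph{exactly} one indecomposable only in the homogeneous tubes (an exceptional tube of rank $p_i$ contains $p_i$ indecomposables of quasi-length $p_i$ and class $\bh_\infty$), which still yields the asserted $\PP^1$-family. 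You should also keep track of signs when transporting a positive class $\be$ along an autoequivalence, since the order $\prec$ is not preserved by a general element of $\mathrm{SL}_2(\ZZ)$; this is routine but worth a line.
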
 

\begin{remark}
If $\genus{\XX}<1$ it is well-known that similar statements 
to (b), (c) and~(d) hold true. For $\genus{\XX}>1$ a description of the
positive roots, paralleling Kac's theorem for representations of quivers,
was found recently by Crawley-Boevey~\cite{CB05}. 
\end{remark}

\subsection{Positive Schur roots in the tubular case}
Let $\XX$ be a tubular weighted projective line.  
\begin{definition}
For $q\in\QQi$ we define 
$a(q)\in\ZZ$ and  $b(q)\in\ZZ_{\geq 0}$ such that $\gcd(a(q),b(q))=1$ and 
$q=\frac{a(q)}{b(q)}$. Furthermore we define
$\bh_q=b(q)\bh_0 +a(q)\bh_\infty$.
\end{definition}
 
Note that $\bh_q$ is a positive isotropic root with $\slo(\bh_q)=q$.

For $\be\in\KX$ we write $\tau^\ZZ\be=\{\tau^i\be\mid i\in\ZZ\}$. Since
$\XX$ is tubular $\tau^\ZZ\be$ is a finite set of cardinality
$\abs{\tau^\ZZ\be}$ which is a divisor of $p$ (recall $p\vn=0$), and
$
\Sig\tau^\ZZ\be := \sum_{\be'\in\tau^\ZZ\be}\be'
$
is well-defined.

\begin{lemma} For $0\prec\be\in\KX$ with $\slope(\be)=q\in\QQi$ there exists
a positive integer $\ql(\be)$ such that $\Sig\tau^\ZZ\be =\ql(\be)\bh_q$.
\end{lemma}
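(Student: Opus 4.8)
The plan is to show that the vector $\Sig\tau^\ZZ\be$ is a positive multiple of the isotropic root $\bh_q$, and the natural strategy is to verify that $\Sig\tau^\ZZ\be$ is $\tau$-invariant, has slope $q$, and is itself a positive element of $\KX$. First I would observe that the set $\tau^\ZZ\be$ is by construction a single $\tau$-orbit, so applying $\tau$ merely permutes its elements; hence $\tau\bigl(\Sig\tau^\ZZ\be\bigr)=\Sig\tau^\ZZ\be$, i.e.\ $\Sig\tau^\ZZ\be$ is fixed by $\tau$. By the identity $\euler{\tau\be',\tau\bff'}=\euler{\be',\bff'}$ recorded after Serre duality, together with the fact that $\tau$ preserves rank and degree (these are determined by the homological form against the $\tau$-fixed classes $\bh_\infty$ and $\bh_0$, and one checks $\tau\bh_\infty=\bh_\infty$, $\tau\bh_0=\bh_0$ in the tubular case where $p\vn=0$), each summand $\be'$ in the orbit has the same rank and the same degree as $\be$, hence the same slope $q$. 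Consequently $\Sig\tau^\ZZ\be$ is $\tau$-invariant with slope $q$.

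Next I would use the structural description of the $\tau$-fixed part of $\KX$. Since $\Sig\tau^\ZZ\be$ is fixed by $\tau$, the remark after Serre duality gives $\euler{\Sig\tau^\ZZ\be,\Sig\tau^\ZZ\be}=0$, so it is an isotropic class. I would then argue that in the tubular situation the $\tau$-invariant isotropic positive classes of a fixed slope $q$ are precisely the positive integer multiples of the primitive isotropic root $\bh_q$. Concretely, the subgroup of $\tau$-fixed elements of $\KX$ is spanned by $\bh_0$ and $\bh_\infty$ (a rank-two radical sublattice, since $\tau$ acts with finite order $p$ and the fixed classes are determined by rank and degree alone), and an element $c\bh_0+d\bh_\infty$ of this plane has slope $a(q)/b(q)$ exactly when $(c,d)$ is proportional to $(b(q),a(q))$; primitivity of $\bh_q$ then forces $\Sig\tau^\ZZ\be=m\,\bh_q$ for some integer $m$.

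It remains to check that $m=\ql(\be)$ is a \emph{positive} integer. Positivity follows because each $\be'\in\tau^\ZZ\be$ satisfies $0\prec\be'$: indeed every summand has the same strictly positive rank as $\be$ (or, if $\rk\be=0$, the same positive degree), so the sum of finitely many positive classes with common slope $q$ is again positive, and a positive multiple of $\bh_q$ must have strictly positive coefficient $m$. Integrality of $m$ is automatic since $\bh_q$ is primitive and $\Sig\tau^\ZZ\be$ lies in $\KX$.

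The step I expect to be the main obstacle is pinning down that the $\tau$-fixed sublattice of $\KX$ is exactly the rank-two plane $\euler{\bh_0,\bh_\infty}$, rather than something larger; this is where tubularity is essential (through $p\vn=0$, which forces $\tau^p=\mathrm{id}$ and collapses the fixed space to the radical of the homological form). Once the fixed lattice is identified with this plane, the slope computation that isolates the $\bh_q$-direction is routine linear algebra using $\slo(\bh_0)=0$ and $\slo(\bh_\infty)=\infty$, and the positivity argument is immediate from $0\prec\be'$ for each orbit element.
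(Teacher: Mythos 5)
Your argument coincides with the paper's for the first half: both establish that $\Sig\tau^\ZZ\be$ is positive, has slope $q$, is fixed by $\tau$, and hence (by the identity $\euler{\tau\be,\tau\bff}=-\euler{\bff,\tau\be}$) is isotropic. The routes then diverge. The paper closes by citing \cite[Lemma 2.6]{LM93}, which supplies the fact that a positive isotropic class of slope $q$ is a positive integer multiple of $\bh_q$ (via the identification of the isotropic classes with the rank-two radical $\ZZ\bh_0\oplus\ZZ\bh_\infty$ of the positive semidefinite form). You instead argue directly that a $\tau$-fixed class must lie in $\ZZ\bh_0\oplus\ZZ\bh_\infty$ and then do linear algebra with rank and degree; note that with this route the isotropy observation becomes decorative rather than load-bearing. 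Your version is a legitimate and essentially self-contained unpacking of the citation, but it rests on two lattice-theoretic facts that you assert rather than prove, and your justification of the first is close to circular: (i) that the eigenvalue-$1$ eigenspace of $\tau$ on $\KX\otimes\QQ$ is exactly two-dimensional --- saying ``the fixed classes are determined by rank and degree alone'' is a restatement of this, not a proof; one needs $\tau^p=\mathrm{id}$ (diagonalizability) together with the fact that $1$ occurs with multiplicity $2$ in the Coxeter polynomial of a tubular type; and (ii) that $\ZZ\bh_0+\ZZ\bh_\infty$ is \emph{saturated} in $\KX$, so that $\bh_q$ is primitive in $\KX$ and not merely in that plane. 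Point (ii) is exactly where the integrality of $\ql(\be)$ lives: rank and degree alone (both of which take the value $p$ on $\bh_0$ resp.\ $\bh_\infty$) only force $\ql(\be)\in\tfrac1p\ZZ$, so ``integrality is automatic'' is too quick. Both facts are true for tubular weight types and can be checked in coordinates or extracted from the same lemma of Lenzing--Meltzer; what your approach buys is independence from positive semidefiniteness of the quadratic form, at the cost of having to verify the structure of the $\tau$-fixed lattice, which is precisely what the paper outsources.
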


\begin{proof}
Since $\XX$ is tubular, $\slo(\be)=\slo(\tau\be)=q$, 
thus $\slo(\Sig\tau^\ZZ\be)=q$. Moreover $\Sig\tau^\ZZ\be$ is positive and
$\tau(\Sig\tau^\ZZ\be)=\Sig\tau^\ZZ\be$, so 
$\euler{\Sig\tau^\ZZ\be,\Sig\tau^\ZZ\be}=0$. Our claim follows now from
\cite[Lemma 2.6]{LM93}.
\end{proof}

\begin{proposition}\label{prp:SchurRt}
Let $\XX$ be a tubular weighted projective line. The positive isotropic
Schur roots are precisly of the form $\bh_q$ with $q\in\QQi$. Moreover,
for $0\prec\be\in\KX$ the following are equivalent:
\begin{itemize}
\item[(a)] $\be$ is a real positive Schur root,
\item[(b)] there exists an indecomposable $E\in\coh\XX$ with 
$\Ext^1_\XX(E,E)=0$ and $[E]=\be$,
\item[(c)] $\euler{\be,\be}=1$ and $\ql(\be)<\abs{\tau^\ZZ\be}$,
\end{itemize}
\end{proposition}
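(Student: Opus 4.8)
The plan is to separate the claim into an essentially formal part and a computational part. The equivalence (a)$\Leftrightarrow$(b) is immediate: since ``rigid'' means $\Ext^1_\XX(E,E)=0$, Proposition~\ref{prop:real-pos-Schur} already asserts that $E\mapsto[E]$ is a bijection between indecomposable rigid sheaves and real positive Schur roots, so (a) and (b) are merely the two descriptions of the same set of classes. All the genuine content therefore lies in matching the homological condition (b) with the numerical condition (c), and in identifying the isotropic Schur roots. My strategy for both is to use Theorem~\ref{coh:thm-tub} to transport the problem into a single standard tube, where everything becomes elementary combinatorics.

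First I would reduce to a tube. Given $0\prec\be$ with $\slo(\be)=q$ and an indecomposable $E$ with $[E]=\be$, Theorem~\ref{coh:thm-tub}(a),(b) places $E$ in $\cC_q$, which is equivalent to $\cC_\infty=\coh_0\XX$ and hence, by Proposition~\ref{coh:prp0}, is a family of standard tubes. Let $\cT$ be the tube containing $E$, let $r=\rk\cT$, let $\ell$ be the quasi-length of $E$, and let $S_1,\dots,S_r$ be the quasi-simples of $\cT$ (so $\tau$ permutes them cyclically). Writing $\ell=ar+s$ with $0\le s<r$, the quasi-composition series shows that $[E]=\sum_j n_j[S_j]$, where the multiplicity vector $(n_j)$ equals $a$ everywhere except on one contiguous arc of length $s$ where it equals $a+1$. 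From $\euler{S_j,S_k}=\delta_{jk}-\delta_{k,j-1}$ one gets $\euler{[E],[E]}=\sum_j n_j^2-\sum_j n_jn_{j-1}$, and a short computation yields $\euler{\be,\be}=1$ when $r\nmid\ell$ and $\euler{\be,\be}=0$ when $r\mid\ell$. I would combine this with the standard facts (see \cite{Ri84}) that $E$ is a brick iff $\ell\le r$ and that $E$ is rigid iff $\ell\le r-1$.

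Next I would compute the two quantities in (c). When $r\nmid\ell$ the arc satisfies $0<s<r$, so $(n_j)$ has no nontrivial cyclic symmetry and the $r$ rotations $\tau^i[E]$ are pairwise distinct, giving $\card{\tau^\ZZ\be}=r$; when $r\mid\ell$ all $n_j$ are equal, $\be$ is $\tau$-invariant, and $\card{\tau^\ZZ\be}=1$. Summing over the orbit gives $\Sig\tau^\ZZ\be=\ell\,\bh_\cT$ in the real case (each quasi-simple acquiring total multiplicity $\ell$) and $\Sig\tau^\ZZ\be=(\ell/r)\,\bh_\cT$ in the isotropic case, where $\bh_\cT=\sum_j[S_j]$ is the $\tau$-invariant isotropic class of $\cT$. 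The decisive point is that $\bh_\cT=\bh_q$: the equivalence of Theorem~\ref{coh:thm-tub}(a) is induced by a slope-changing autoequivalence of $\cDb(\coh\XX)$ carrying $\cC_q$ to $\cC_\infty$, and it sends both $\bh_q$ and $\bh_\cT$ to the primitive isotropic class $\bh_\infty$, which is the sum of the quasi-simples around any tube of $\coh_0\XX$. Comparing with the preceding lemma then gives $\ql(\be)=\ell$ in the real case and $\ql(\be)=\ell/r$ in the isotropic case.

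With this dictionary the three conclusions drop out. A positive isotropic Schur root is the class of a brick with $r\mid\ell$, which forces $\ell=r$ and hence $\be=\bh_\cT=\bh_q$; conversely each $\bh_q$ is the class of a stable sheaf of slope $q$, namely the quasi-simple of a homogeneous tube in $\cC_q$ (a brick by Proposition~\ref{coh:prp-stab}(a)), so the positive isotropic Schur roots are exactly the $\bh_q$. For (b)$\Leftrightarrow$(c): if (c) holds, then $\euler{\be,\be}=1$ together with $0\prec\be$ produces, via Theorem~\ref{coh:thm-tub}(c),(d), an indecomposable $E$ with $[E]=\be$; since $r\nmid\ell$ we have $\ql(\be)=\ell$ and $\card{\tau^\ZZ\be}=r$, so $\ql(\be)<\card{\tau^\ZZ\be}$ reads exactly $\ell\le r-1$, i.e. $E$ is rigid, which is (b). Reading the same computation backwards proves (b)$\Rightarrow$(c). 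I expect the one genuine obstacle to be the identification $\bh_\cT=\bh_q$, that is, controlling how the primitive isotropic class of an individual tube embeds in $\KX$ under the slope-changing equivalences; the remaining tube numerics are routine.
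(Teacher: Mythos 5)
Your argument follows the same route as the paper's: reduce to a single standard tube via Proposition~\ref{coh:prp0} and Theorem~\ref{coh:thm-tub}, use the standard facts that an indecomposable of quasi-length $\ell$ in a tube of rank $r$ is a brick iff $\ell\le r$ and rigid iff $\ell<r$, and translate condition (c) by identifying $\ql(\be)$ with the quasi-length and $\abs{\tau^\ZZ\be}$ with $r$. You make explicit the tube combinatorics (computing $\euler{\be,\be}$ from multiplicity vectors, the orbit count, and $\Sig\tau^\ZZ\be=\ell\,\bh_{\cT}$) that the paper leaves implicit, which is a genuine improvement in self-containedness. The one step you flag as an obstacle --- $\bh_{\cT}=\bh_q$, i.e.\ that the primitive isotropic class of the tube containing $E$ is $\bh_q$ itself and not a proper multiple --- is indeed the crux and is not yet closed: if $\bh_{\cT}=m\bh_q$ with $m>1$, then comparing with the Lemma preceding the proposition gives $\ql(\be)=m\ell$, and conditions (b) and (c) would no longer match (take $\ell=r-1$). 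This is exactly the point the paper disposes of by citing \cite[Lemma~2.6]{LM93} and \cite[Thm.~4.6(iv)]{LM93}. Your sketch via slope-changing autoequivalences can be completed in two steps: (i) for a tube of $\coh_0\XX$ the sum of the quasi-simple classes is $\tau$-invariant, hence lies in the radical of the symmetrized Euler form, which for tubular $\XX$ is the rank-two lattice spanned by $\bh_0$ and $\bh_\infty$; comparing rank and degree forces this sum to equal $\bh_\infty$; (ii) the telescopic autoequivalences realizing $\cC_q\simeq\cC_\infty$ induce automorphisms of this radical lattice, hence carry the primitive positive isotropic class of slope $q$, namely $\bh_q$, to $\bh_\infty$. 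With those two points supplied (or with the $\ql(\be)=\ell$ identification taken from \cite{LM93} as the paper does), your proof is complete and is essentially an expanded version of the paper's.
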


\begin{proof}
The equivalence of (a) and (b) was stated in 
Proposition~\ref{prop:real-pos-Schur}.
By Proposition~\ref{coh:prp0} and Theorem~\ref{coh:thm-tub}(a) the 
Auslander-Reiten quiver of $\coh\XX$ consists only of standard tubes. It is
well known, that any indecomposable $E$ in a standard tube of rank $r$ 
has trivial endomorphism rings if and only if the quasi-length of $E$ is less
or equal to $r$. 

Now, $\bh_q$ is by \cite[Lemma 2.6]{LM93} and Theorem~\ref{coh:thm-tub}(c) 
the smallest positive isotropic root of slope $q$. 
By the above observation it is a Schur root and all other isotropic roots of 
slope $q$ are not Schur. 

It remains to show that in case $[E]$ is a real root
$\ql([E])$ is the quasi-length of $E$. This follows 
from~\cite[Thm.~4.6(iv)]{LM93} applied to the family representing $\bh_q$.
\end{proof}

\subsection{Ext-orthogonality in the tubular case}
Let $\XX$ be a tubular weighted projective line and $E$, $F$ be two 
indecomposable rigid sheaves. We call $E$ and $F$ \emph{Ext-orthogonal} or 
\emph{compatible} if $E\oplus F$ is rigid. We denote by $\be=[E]$ and 
$\bff=[F]$ their classes in the Grothendieck group. 
We will show that the condition for $E$ and $F$ to be Ext-orthogonal 
can be expressed  only in terms of $\be$ and $\bff$  and the homological 
form.

First of all, $\slope(\be)$ and $\slope(\bff)$ are calculated using
the homological form.  If $\slope(\be)<\slope(\bff)$ then $E\oplus F$
is rigid if and only if $\euler{\bff,\be}=0$.  It remains to consider
the case where $q:=\slope(\be)=\slope(\bff)$. Since the indecomposable
sheaves with slope $q$ form a family of pairwise orthogonal tubes
$E\oplus F$ is rigid whenever $E$ and $F$ belong to different
tubes. The latter happens if and only if $\euler{\be,\tau^{i}\bff}=0$
for all $i=0,\ldots,r-1$ where $r=\abs{\tau^\ZZ\bff}$.

It remains to characterize when two indecomposable rigid sheaves of
the same tube are Ext-orthogonal. For this we recall that each tube is
an abelian category whose quasi-simples form an orbit $\tau^\ZZ S$
under the Auslander-Reiten translation $\tau$ of a single quasi-simple
object $S$.  Now, each indecomopsable rigid sheaf $E$ defines
$\base{E}$, the set of quasi-simples which occur as composition factor
of $E$, more precisely, if $i$ is minimal with $\tau^i S\in\base{E}$
then $\base{E}=\{\tau^{i+j} S\mid j=0,\ldots,\ql(\be)-1\}$. It is not
hard to see that $E$ and $F$ are Ext-orthogonal if and only if one of
the following three conditions is satisfied: (i) $\base{E}\subseteq
\base{F}$, (ii) $\base{F}\subseteq\base{E}$ or (iii) $\base{E}\cap
\tau^h \base{F}=0$ for $h=-1,0,1$.

In the following we give equivalent conditions on the vectors $\be$
and $\bff$ for each of these cases. We first show how the values of
$\euler{\bff,-}$ and $\euler{-,\bff}$ vary in the tube. We
have shown this in Figure \ref{fig:ext-orthogonality} where the bottom
dotted line indicates the $\tau$-orbit of the quasi-simples and the
dotted line on the top the $\tau$-orbit of indecomposable rigid
objects of maximal quasi-length. We have indicated the values as
vectors
$\left[\begin{matrix}\euler{-,\bff}\\ 
\euler{\bff,-}\end{matrix}\right]$.
The region of indecomposable rigid objects which are Ext-orthogonal to
$F$ is shown gray.
\begin{figure}[!h]
\begin{center}
  \includegraphics[scale=1,viewport=140 566 470 720,clip]{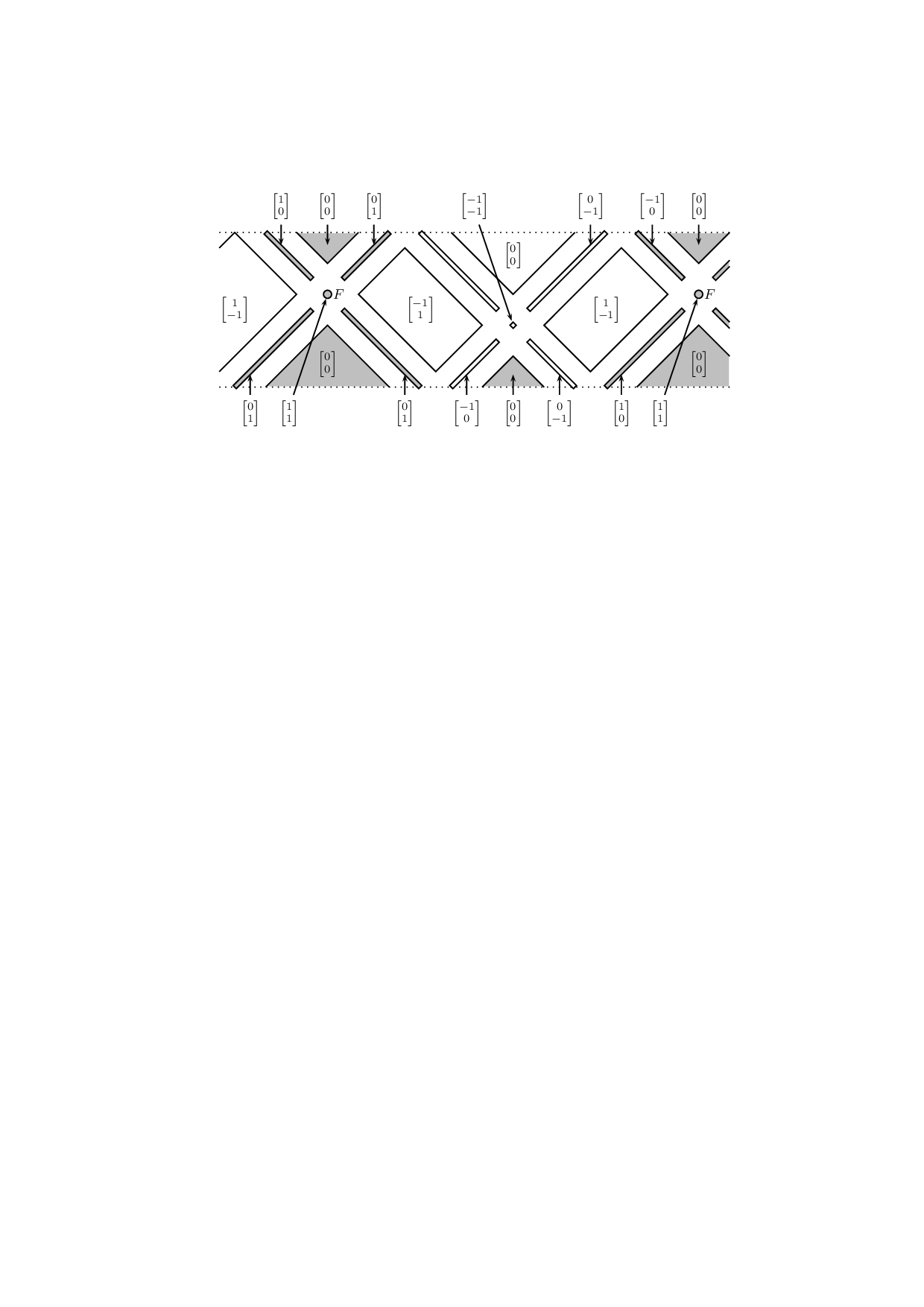}
\end{center}
\caption{Ext-orthogonality in a tube}
\label{fig:ext-orthogonality} 
\end{figure}

 Note that in the cases (i) and (ii) we have $\euler{\be,\bff}\geq 0$
 and $\euler{\bff,\be}\geq 0$ and in case (iii) we have
 $\euler{\be,\bff}=0=\euler{\bff,\be}$.  Now, if $\euler{\be,\bff}>0$
 and $\euler{\bff,\be}=0$ (or $\euler{\be,\bff}=0$ and
 $\euler{\bff,\be}>0$) then $E$ and $F$ lie on the same ray or coray
 and (i) or (ii) is satisfied. If
 $\euler{\be,\bff}=0=\euler{\bff,\be}$ then let $j$ be minimal with
 $\euler{\tau^j\be,\bff}\neq 0$. If $\euler{\tau^j \be,\bff}>0$ then
 again (i) or (ii) is satisfied whereas if $\euler{\tau^j \be,\bff}<0$
 then (iii) holds if and only if $\ql(\be)+\ql(\bff)<\abs{\tau^\ZZ
   \bff}$.

Altogether we have proved the following statement.
\begin{proposition}
  Let $E$ and $F$ be two indecomposable rigid sheaves with classes
  $\be=[E]$ and $\bff=[F]$. Then $E$ and
  $F$ are Ext-orthogonal if and only if one of the following
  conditions is satisfied.
  \begin{itemize}
  \item[{\rm (a)}]
    $\slope(\be)<\slope(\bff)$ and $\euler{\bff,\be}=0$,
  \item[{\rm (b)}]
    $\slope(\be)>\slope(\bff)$ and $\euler{\be,\bff}=0$,
  \item[{\rm (c)}] $\slope(\be)=\slope(\bff)$ and
    $\euler{\tau^j\be,\bff}=0$ for $j=0,\ldots,\ql(\be)-1$,
  \item[{\rm (d)}] $\slope(\be)=\slope(\bff)$ and $\euler{\be,\bff}\geq 0$, $\euler{\bff,\be}\geq 0$ but not both zero,
  \item[{\rm (e)}] $\slope(\be)=\slope(\bff)$ and
    $\euler{\be,\bff}=0=\euler{\bff,\be}$ and there exists a $j$ such
    that $\euler{\tau^j\be,\bff}\neq 0$. If this $j$ is minimal then 
    either
    \begin{itemize}
    \item[{\rm (e1)}] $\euler{\tau^j\be,\bff}>0$, or
    \item[{\rm (e2)}] $\euler{\tau^j\be,\bff}<0$ and $\ql(\be)+\ql(\bff)<\abs{\tau^\ZZ\be}$.
    \end{itemize}
  \end{itemize}
\end{proposition}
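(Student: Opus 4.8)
The plan is to unwind the definition: since $E$ and $F$ are individually rigid, $E\oplus F$ is rigid if and only if $\Ext^1_\XX(E,F)=0=\Ext^1_\XX(F,E)$. I would translate each of these two vanishing conditions into a statement about $\be$, $\bff$ and the homological form, treating the three slope regimes $\slope(\be)<\slope(\bff)$, $\slope(\be)>\slope(\bff)$ and $\slope(\be)=\slope(\bff)$ separately, and using Serre duality $\Ext^1_\XX(E,F)\cong D\Hom_\XX(F,\tau E)$ to move between $\Ext$ and $\Hom$.

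First I would dispose of the unequal-slope cases (a) and (b). Assume $\slope(\be)<\slope(\bff)$, so that $E\in\cC_q$ and $F\in\cC_{q'}$ with $q<q'$ by Theorem~\ref{coh:thm-tub}(b). Proposition~\ref{coh:prp-stab}(b) gives $\Hom_\XX(F,E)=0$, and since each $\cC_q$ is $\tau$-stable by Theorem~\ref{coh:thm-tub}(a) we also have $\tau E\in\cC_q$, whence $\Ext^1_\XX(E,F)\cong D\Hom_\XX(F,\tau E)=0$ holds automatically. Thus rigidity of $E\oplus F$ is equivalent to $\Ext^1_\XX(F,E)=0$; but $\euler{\bff,\be}=\dim\Hom_\XX(F,E)-\dim\Ext^1_\XX(F,E)=-\dim\Ext^1_\XX(F,E)$, so this is exactly $\euler{\bff,\be}=0$. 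This is condition (a), and the symmetric argument (interchanging the roles of $E$ and $F$) yields (b).

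Next I would treat the equal-slope case $q:=\slope(\be)=\slope(\bff)$. By Theorem~\ref{coh:thm-tub}(a) the subcategory $\cC_q$ is equivalent to $\cC_\infty$, hence a family of pairwise orthogonal standard tubes. If $E$ and $F$ lie in different tubes, then $\Hom$ and $\Ext^1$ vanish in both directions, so $E\oplus F$ is automatically rigid, a situation detected by the vanishing of all the numbers $\euler{\tau^j\be,\bff}$. The substantial case is that $E$ and $F$ lie in a common tube of rank $r$. Here I would use that, by Proposition~\ref{prp:SchurRt}(c), a rigid indecomposable in such a tube has quasi-length strictly less than $r$, so that $r=\abs{\tau^\ZZ\be}=\abs{\tau^\ZZ\bff}$ and the sets $\base{E}$, $\base{F}$ are arcs of lengths $\ql(\be)$, $\ql(\bff)$ in the cyclic $\tau$-orbit of quasi-simples. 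The uniserial ray-and-coray structure of a standard tube then shows that $E\oplus F$ is rigid precisely in the three configurations (i) $\base{E}\subseteq\base{F}$, (ii) $\base{F}\subseteq\base{E}$, or (iii) $\base{E}\cap\tau^h\base{F}=\emptyset$ for $h\in\{-1,0,1\}$.

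The hard part will be the explicit evaluation of the homological form inside the tube, that is, producing the value distribution recorded in Figure~\ref{fig:ext-orthogonality}. Writing $[U_a]$ for the quasi-simples, an appropriate labeling gives $\euler{[U_a],[U_b]}=\delta_{a,b}-\delta_{b,a-1}$, and I would expand $\be$ and $\bff$ as sums of consecutive classes $[U_a]$ and sum these telescoping contributions, keeping careful track of the wrap-around modulo $r$ and of the constraints $\ql(\be),\ql(\bff)<r$. From the resulting signs one reads off the translations: the nested configurations (i) and (ii) force $\euler{\be,\bff}\ge 0$ and $\euler{\bff,\be}\ge 0$ with not both zero, which is condition (d); the separated configuration (iii) forces $\euler{\be,\bff}=0=\euler{\bff,\be}$, and then the sign of the form at the first index $j$ with $\euler{\tau^j\be,\bff}\neq 0$ distinguishes a genuine separation from a near-overlap, giving (e1) versus (e2), while the uniform vanishing over the window $j=0,\dots,\ql(\be)-1$ is condition (c). The only real subtlety, and where I expect the bookkeeping to be delicate, is verifying that the buffer condition $h\in\{-1,0,1\}$ in (iii) corresponds exactly to the numerical inequality $\ql(\be)+\ql(\bff)<\abs{\tau^\ZZ\be}$ appearing in (e2); matching these constants requires the precise arc-length count combined with the sign computation above.
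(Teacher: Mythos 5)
Your proposal follows essentially the same route as the paper: dispose of the unequal-slope cases via Serre duality and the vanishing of $\Hom$ between slope categories, reduce the equal-slope case to a single standard tube, characterize Ext-orthogonality there by the three configurations of $\base{E}$ and $\base{F}$, and then translate these into the sign pattern of $\euler{\tau^j\be,\bff}$ (the paper records this value distribution in Figure~\ref{fig:ext-orthogonality}, where you propose to derive it by a telescoping computation over quasi-simple classes, which amounts to the same thing). The argument is correct and no essential step differs from the paper's.
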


\subsection{Rigid objects and cluster-tilting objects}
\label{subsec:rigid-cc}

A rigid sheaf $E\in\coh \XX$ is called a \emph{tilting sheaf} (or \emph{maximal
  rigid}) if it is maximal among the rigid sheaves in the following
sense:  
any sheaf $F$ such that $E\oplus F$ is again rigid belongs
to the additive closure $\Add E$.  Note that the number of
(pairwise non-isomorphic) indecomposable direct summands of any tilting sheaf
equals the rank $n={2+\sum_{i=1}^{t}(p_i-1)}$ of the Grothendieck
group.  A rigid sheaf with $n-1$ (pairwise non-isomorphic) indecomposable
direct summands is called an \emph{almost complete tilting sheaf}. 
An indecomposable sheaf $E'$ such that $E\oplus E'$ is a tilting sheaf is
called a {\em complement} of the almost complete tilting sheaf $E$.
Two indecomposable rigid objects $E$ and $E'$ are called \emph{compatible}
if $E\oplus E'$ is rigid.
 
Following Keller~\cite{Ke05}, the orbit category
$\cC_\XX:=\cDb(\coh\XX)/\euler{\tau^{-1}[1]}$ associated to a weighted projective
line is a triangulated 2-Calabi-Yau category. 

\begin{remark}
\label{rem:tilt-clutilt}
As explained in~\cite{BKL08},
the composition of the canonical functors
\[
\coh\XX\xrightarrow{\rm incl.}\cDb(\coh\XX)\xrightarrow{\rm proj.}\cC_\XX
\]
allows to think of $\coh\XX$ as a non-full subcategory of $\cC_\XX$ which has
the same isoclasses of indecomposable resp.~rigid objects. 
It follows that the tilting objects in $\coh \XX$ correspond bijectively 
with the \emph{cluster-tilting objects} in $\cC_\XX$, that is, the maximal rigid objects in $\cC_\XX$.

By~\cite[Prop.~5.14]{Hueb96} each almost complete tilting sheaf in 
$\coh\XX$ has precisely two complements. It follows that the exchange graph for
tilting objects in $\coh\XX$ and the exchange graph for cluster tilting
objects can be identified.
By~\cite{BKL08}, the cluster category
$\cC_\XX$ has a cluster structure in the sense of~\cite{BIRS}.

By Proposition~\ref{prop:real-pos-Schur}  
the positive real Schur roots 
parametrize the indecomposable rigid objects in~$\cC_\XX$.
\end{remark}

\section{Tubular cluster categories}

In case the weighted projective line $\XX$ is tubular  
we call $\cC_\XX$ the corresponding
{\em tubular cluster category}.  Moreover, we note, that 
by~\cite[Thm.~8.8]{BKL08}
in this case the exchange graph of cluster-tilting objects is connected.

\subsection{GLS-cluster categories and -character} \label{ssec:GLS}
Let us review some of the results of~\cite{Am08} and~\cite{GLS08}. 
Note that in \cite{GLS10} a more general theory was developed. 
However, the description in~\cite{GLS08} is more convenient for our purpose:  
Let $Q$ be a quiver without oriented cycles and $n$ vertices and denote by 
$\CC Q$ the path algebra of $Q$.
Moreover, let $M_1, \ldots, M_r$ be a family of indecomposable, pairwise 
non-isomorphic preinjective representations of $Q$, closed under successors 
and such that $M=\oplus_{i=1}^r M_i=\oM\oplus I$ for an injective cogenerator 
$I$ of $\lmd{\CC Q}$.
Note, that $E_\oM=\End_Q(\oM)$ is a basic algebra of
global dimension~2. The Gabriel quiver $\tQ_\oM$ of $E_\oM$ is given
by the full subquiver of the preinjective component of $\CC Q$ which is
supported in the summands of $\oM$.

Let $\Lam$ be the  preprojective algebra associated to the path algebra $\CC Q$.
Since  $\CC Q$ is a subalgebra of $\Lam$
we have the restriction functor 
${?\rvert_Q\df\lmd{\Lam}\rightarrow\lmd{\CC Q}}$. 
Recall from \cite{HRS96} that a finite dimensional algebra $A$ is called 
\emph{piecewise hereditary} if $\cDb(\lmd{A})$ is triangle-equivalent to 
$\cDb(\Her)$ for $\Her$ a connected abelian hereditary $k$-category 
and a tilting object. Note that by \cite{Ha2001}, the 
only relevant cases are where $\Her=\lmd{H}$ for some hereditary algebra $H$ 
or $\Her=\coh\XX$ for some weighted projective line $\XX$.

\begin{theorem} For the subcategory
$\cC_M:=\{X\in\lmd{\Lam}\mid X\rvert_Q\in\Add(M)\}$ the following holds:
\begin{itemize}
\item[(a)]
$\cC_M$ is  Frobenius category. The stable category $\scC_M$
is a triangulated 2-Calabi-Yau category with a  basic cluster-tilting
object $T_M=\oplus_{i=1}^{r-n}T_i$ such that the quiver $\hQ_\oM$ of 
$\End_{\scC_M}(T_M)$ is obtained from
$\tQ_\oM$ by inserting extra arrows $M_i\rightarrow M_j$ whenever 
$M_j\cong\tau_Q M_i$.
\item[(b)]
We have a cluster character 
$\vph_?\df\operatorname{obj}(\scC_M)\rightarrow A_M$ 
in the sense of~\cite[Def.~2]{Pa08}
with the following additional properties:
\begin{itemize}
\item[(i)] $A_M$ is a finitely generated cluster algebra with trivial
coefficients and initial seed $((\vph_{T_i})_{i=1,\ldots,r-n},\hQ_\oM)$.
\item[(ii)]
The family $(\vph_X)$ where $X$ runs over the isoclasses of rigid objects in
$\scC_M$, is linearly independent in $A_M$.
\end{itemize}
\item[(c)]
If $E_\oM$ is piecewise hereditary, then 
$\scC_M$ is triangle equivalent to the cluster category
$\cDb(\lmd{E_\oM})/\euler{\tau_\cD^{-1}[1]}$.
\end{itemize}
\end{theorem}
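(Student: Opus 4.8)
The plan is to verify that the hypotheses we have imposed on the family $M_1,\dots,M_r$ are precisely those required by the construction of Geiss--Leclerc--Schr\"oer~\cite{GLS08} and, for part~(c), by Amiot~\cite{Am08}, and then to read off each assertion from the corresponding result there. The exact structure on $\lmd{\Lam}$ is the one induced by short exact sequences of $\Lam$-modules; the crucial input is that the condition ``$M$ is preinjective and closed under successors'' keeps $\Add(M)$ stable under the relevant operations, so that $\cC_M$ is closed under extensions and hence inherits an exact structure.

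For part~(a) I would first establish that $\cC_M$ is Frobenius. The key is that $\cC_M$ has enough relative-projective and enough relative-injective objects and that these two classes coincide, the coincidence being governed by the injective cogenerator $I$; this is exactly the situation analysed in~\cite{GLS08}, so both the Frobenius property and the resulting $2$-Calabi--Yau structure on $\scC_M$ follow from the general theory there. The basic cluster-tilting object is $T_M=\oplus_{i=1}^{r-n}T_i$, whose $r-n$ summands correspond to the non-injective indecomposable summands of $\oM$. The description of the quiver $\hQ_\oM$ of $\End_{\scC_M}(T_M)$ is then an endomorphism-algebra computation in the stable category: the arrows of $\tQ_\oM$ survive as the irreducible maps in the preinjective component supported on $\oM$, while each $\Ext^1$ in $\scC_M$ created by the translate $\tau_Q$ contributes exactly one extra arrow $M_i\to M_j$ whenever $M_j\cong\tau_Q M_i$.

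For part~(b), since $\scC_M$ is a $2$-Calabi--Yau triangulated category equipped with the cluster-tilting object $T_M$, Palu's construction~\cite{Pa08} yields a cluster character $\vph_?$, whose initial seed is read off from $T_M$ and equals $((\vph_{T_i})_{i=1,\dots,r-n},\hQ_\oM)$. That $A_M$ is a \emph{finitely generated} cluster algebra and that the family $(\vph_X)$ over the isoclasses of rigid objects is linearly independent are the additional module-theoretic facts proved in~\cite{GLS08}; I would simply invoke them, the linear independence following there from the explicit expansion of the cluster monomials.

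For part~(c), which carries the main difficulty, one uses that $E_\oM$ has global dimension $2$, so Amiot's generalized cluster category $\cC_{E_\oM}$ is defined~\cite{Am08}. I would identify $\scC_M$ with $\cC_{E_\oM}$ by a recognition argument for $2$-Calabi--Yau categories with cluster-tilting object: both contain a cluster-tilting object whose endomorphism algebra has Gabriel quiver $\hQ_\oM$, and matching the remaining relational (potential) data gives a triangle equivalence. If moreover $E_\oM$ is piecewise hereditary, then $\cDb(\lmd{E_\oM})\simeq\cDb(\Her)$ with $\Her=\coh\XX$, and by Keller~\cite{Ke05} the orbit category $\cDb(\lmd{E_\oM})/\euler{\tau_\cD^{-1}[1]}$ is triangulated and coincides with Amiot's cluster category in this hereditary case; composing the identifications yields the asserted equivalence. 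The hardest point is exactly this recognition step: one must ensure that the two $2$-Calabi--Yau categories are equivalent by a functor \emph{respecting} the cluster-tilting objects, which means controlling not only the quiver $\hQ_\oM$ but the full higher-structure data, rather than concluding a merely abstract equivalence.
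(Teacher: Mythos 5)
Your proposal is correct and follows essentially the same route as the paper, which proves this theorem purely by citation in Remark~\ref{rem:A-GLS}: parts (a) and (b) are read off from Theorems~2.1--2.3 and Sections~3.2, 3.4, 3.6 of \cite{GLS08} together with \cite{Pa08}, and part (c) from Amiot's recognition theorem \cite[Theorem~5.15]{Am08} combined with Keller's observation \cite{Ke05} that the orbit category is already triangulated (hence equals its triangulated hull) in the piecewise hereditary case. Your identification of the recognition step as the delicate point of (c) matches exactly what the cited theorem of Amiot supplies.
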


\begin{remark} \label{rem:A-GLS}
Part~(a) of the theorem is a consequence of Theorems~2.1, 2.2
and~2.3 in~\cite{GLS08}. Part~(b) follows easily from the discussion in the 
Sections 3.2, 3.4. and~3.6 of~\cite{GLS08}. Part~(c) follows 
from~\cite[Theorem~5.15]{Am08}. Note, that Amiot's category $\cC_A$ is
by construction the triangulated hull of the orbit category 
$\cDb(\lmd{E_\oM})/\euler{\tau_\cD^{-1}[1]}$, however if $A$ is piecewise
hereditary,  the orbit category is already triangulated by~\cite{Ke05}.
\end{remark}

\begin{remark}
The above theorem implies the following: Suppose $E_\oM$ is derived equivalent
to $\coh\XX$ for a tubular weighted projective line $\XX$, 
then by~(c) the stable category $\scC_M$ is triangle equivalent to the cluster 
category $\cC_\XX$. It follows from~\cite[Thm.~8.8]{BKL08} that
in this case the exchange graph of the cluster 
tilting objects of $\scC_M$ is connected. Thus,
we obtain by~(b) a bijection between the positive 
real Schur roots and cluster variables. We will see in the next subsection that
this situation is given for the tubular E-types $(3,3,3)$, $(4,4,2)$ 
and~$(6,3,2)$.

Moreover, two cluster variables 
belong to a common cluster if and only if the corresponding positive real 
Schur roots are Ext-orthogonal.
\end{remark}

\subsection{The tubular E-cases} \label{ssec:TubE}

We are now ready to give an explicit proof in each of the three E-cases that 
the tubular cluster algebra is categorified by $\coh \XX$ for $\XX$ a weighted 
projective line of weight type $(3,3,3)$, $(4,4,2)$ and $(6,3,2)$ respectively. 

\subsubsection{} Let $Q$ be the quiver as shown in the following figure on the 
left. The Auslander-Reiten quiver $\Gamma_Q$ of $Q$ has then the shape as 
shown in the middle of following picture.
\begin{center}
  \begin{picture}(300,50)
    \put(0,25){
      \put(-5,20){\RVCenter{\small $Q:$}}
      \put(0,0){\HVCenter{\small $1$}}
      \put(20,20){\HVCenter{\small $2$}}
      \put(20,0){\HVCenter{\small $3$}}
      \put(20,-20){\HVCenter{\small $4$}}
      \put(16,16){\vector(-1,-1){12}}
      \put(15,0){\vector(-1,0){10}}
      \put(16,-16){\vector(-1,1){12}}
    }
    \put(80,25){
      \put(0,0){
      \put(-5,20){\RVCenter{\small $\Gamma_Q:$}}
        \put(0,0){\HVCenter{\small $1$}}
        \put(20,20){\HVCenter{\small $2$}}
        \put(20,0){\HVCenter{\small $3$}}
        \put(20,-20){\HVCenter{\small $4$}}
        \put(4,4){\vector(1,1){12}}
        \put(5,0){\vector(1,0){10}}
        \put(4,-4){\vector(1,-1){12}}
        \put(24,16){\vector(1,-1){12}}
        \put(25,0){\vector(1,0){10}}
        \put(24,-16){\vector(1,1){12}}
      }
      \put(40,0){
        \put(0,0){\HVCenter{\small $5$}}
        \put(20,20){\HVCenter{\small $6$}}
        \put(20,0){\HVCenter{\small $7$}}
        \put(20,-20){\HVCenter{\small $8$}}
        \put(4,4){\vector(1,1){12}}
        \put(5,0){\vector(1,0){10}}
        \put(4,-4){\vector(1,-1){12}}
        \put(24,16){\vector(1,-1){12}}
        \put(25,0){\vector(1,0){10}}
        \put(24,-16){\vector(1,1){12}}
      }
      \put(80,0){
        \put(0,0){\HVCenter{\small $9$}}
        \put(20,20){\HVCenter{\small $10$}}
        \put(20,0){\HVCenter{\small $11$}}
        \put(20,-20){\HVCenter{\small $12$}}
        \put(4,4){\vector(1,1){12}}
        \put(5,0){\vector(1,0){10}}
        \put(4,-4){\vector(1,-1){12}}
      }
    }
    \put(230,0){
    \put(0,30){
      \put(0,0){
        \put(-5,20){\RVCenter{\small $\tQ_\oM:$}}
        \put(0,0){\HVCenter{\small $1$}}
        \put(25,25){\HVCenter{\small $2$}}
        \put(25,-6.25){\HVCenter{\small $3$}}
        \put(25,-25){\HVCenter{\small $4$}}
        \put(4,4){\vector(1,1){17}}
        \put(5,-1.25){\vector(4,-1){15}}
        \put(4,-4){\vector(1,-1){17}}
        \put(29,21){\vector(1,-1){17}}
        \put(30,-6){\vector(4,1){15}}
        \put(29,-21){\vector(1,1){17}}
      }
      \put(50,0){
        \put(0,0){\HVCenter{\small $5$}}
        \put(25,25){\HVCenter{\small $6$}}
        \put(25,-6.25){\HVCenter{\small $7$}}
        \put(25,-25){\HVCenter{\small $8$}}
        \put(4,4){\vector(1,1){17}}
        \put(5,-1.25){\vector(4,-1){15}}
        \put(4,-4){\vector(1,-1){17}}
      }
      \multiput(25,-25)(0,50){2}{
        \qbezier[20](6,0)(25,0)(46,0)
      }
      \multiput(0,0)(25,-6.25){2}{
        \qbezier[20](6,0)(25,0)(46,0)
      }
    }
    }
  \end{picture}
\end{center}
We choose $M=\oplus_{i=1}^{12}M_i$ the direct sum of all indecomposable modules.
Thus, $E_\oM$ is given by the quiver $\tQ_\oM$ with relations as shown in the 
previous picture on the right.
Use \cite[Thm. A]{BdP99} to check that this algebra is derived equivalent to a 
tubular algebra of type $(3,3,3)$. Note that condition (ii) can be verified 
computationally, see \cite{BdP99b}. 
The algebra $E_\oM$ is in fact  tubular as can be seen using the techniques of 
tubular extensions (which later also were called branch-enlargements) of tame 
concealed algebras described in \cite[Sect.~4.7]{Ri84}, although we don't need that. 
It follows that the bounded derived category of $E_\oM$ is triangle-equivalent 
to $\coh \XX$ for $\XX$ of weight type $(3,3,3)$ by \cite[Thm.~3.2, Prop.~4.1]{GL87}.
In particular $E_\oM$ is piecewise hereditary.
Recall that $\hQ_\oM$ denotes the quiver of $\End_{\cC_\XX}(\oM)$. A 
straightforward check shows that  
$\mu_2\mu_3\mu_4(\hQ_\oM)=\Delta$, where $\Delta$ is the quiver in 
Figure~\ref{fig:ellipticR} associated to $\sE_6^{(1,1)}$. 

\subsubsection{}Let $Q$ be the quiver as shown in the following figure on the 
left. The Auslander-Reiten quiver $\Gamma_Q$ of $Q$ has then the shape as 
shown in the following picture on the right.
\begin{center}
  \begin{picture}(280,130)
    \put(0,65){
      \put(-10,60){\RVCenter{\small $Q:$}}
      \put(0,60){\HVCenter{\small $1$}}
      \put(20,40){\HVCenter{\small $2$}}
      \put(40,20){\HVCenter{\small $3$}}
      \put(60,0){\HVCenter{\small $4$}}      
      \put(40,-20){\HVCenter{\small $5$}}
      \put(20,-40){\HVCenter{\small $6$}}
      \put(0,-60){\HVCenter{\small $7$}}
      \multiput(17,43)(20,-20){3}{\vector(-1,1){14}}
      \multiput(17,-43)(20,20){3}{\vector(-1,-1){14}}
    }
    \put(100,65){
      \put(-10,60){\RVCenter{\small $\Gamma_Q:$}}
      \multiput(0,60)(40,0){3}{\circle*{3}}
      \multiput(0,-60)(40,0){3}{\circle*{3}}
      \multiput(20,40)(40,0){2}{\circle*{3}}
      \multiput(20,-40)(40,0){2}{\circle*{3}}
      \put(40,20){\circle*{3}}
      \put(40,-20){\circle*{3}}
      \put(60,0){\HVCenter{\small $1$}}
      \put(80,20){\HVCenter{\small $2$}}
      \put(80,-20){\HVCenter{\small $3$}}
      \put(100,40){\HVCenter{\small $4$}}      
      \put(100,0){\HVCenter{\small $5$}}
      \put(100,-40){\HVCenter{\small $6$}}
      \put(120,60){\HVCenter{\small $10$}}
      \put(120,20){\HVCenter{\small $7$}}
      \put(120,-20){\HVCenter{\small $8$}}
      \put(120,-60){\HVCenter{\small $11$}}
      \put(140,40){\HVCenter{\small $12$}}      
      \put(140,0){\HVCenter{\small $9$}}
      \put(140,-40){\HVCenter{\small $13$}}
      \put(160,20){\HVCenter{\small $14$}}
      \put(160,-20){\HVCenter{\small $15$}}
      \put(180,0){\HVCenter{\small $16$}}
      \put(3,57){\vector(1,-1){14}}
      \multiput(23,37)(20,20){2}{\vector(1,-1){14}}
      \multiput(43,17)(20,20){3}{\vector(1,-1){13.3}}
      \put(3,-57){\vector(1,1){14}}
      \multiput(23,-37)(20,-20){2}{\vector(1,1){14}}
      \multiput(43,-17)(20,-20){3}{\vector(1,1){13.3}}
      \multiput(23,43)(20,-20){2}{\vector(1,1){14}}
      \multiput(23,-43)(20,20){2}{\vector(1,-1){14}}
      \put(63,43){\vector(1,1){14}}
      \put(63,-43){\vector(1,-1){14}}
      \multiput(0,0)(20,-20){3}{
        \multiput(63.7,3.7)(20,20){2}{\vector(1,1){12.6}}
      }
      \multiput(0,0)(20,20){3}{
        \multiput(63.7,-3.7)(20,-20){2}{\vector(1,-1){12.6}}
      }
      \multiput(103.7,43.7)(20,-20){3}{\vector(1,1){11.8}}
      \multiput(103.7,-43.7)(20,20){3}{\vector(1,-1){11.8}}
      \multiput(124.9,55.1)(20,-20){3}{\vector(1,-1){10.2}}
      \multiput(124.9,-55.1)(20,20){3}{\vector(1,1){10.2}}
    }
  \end{picture}
\end{center}
We choose $M=\oplus_{i=1}^{16}M_i$ with $M_i$ corresponding to the vertex marked
by $i$ in the Auslander-Reiten quiver. Then $E_\oM$ is given by the following
quiver with relations:
\begin{center}
  \begin{picture}(80,90)
    \put(0,45){
      \put(0,0){\HVCenter{\small $1$}}
      \put(20,20){\HVCenter{\small $2$}}
      \put(20,-20){\HVCenter{\small $3$}}
      \put(40,40){\HVCenter{\small $4$}}      
      \put(40,0){\HVCenter{\small $5$}}
      \put(40,-40){\HVCenter{\small $6$}}
      \put(60,20){\HVCenter{\small $7$}}
      \put(60,-20){\HVCenter{\small $8$}}
      \put(80,0){\HVCenter{\small $9$}}
      \multiput(0,0)(20,-20){3}{
        \multiput(3.7,3.7)(20,20){2}{\vector(1,1){12.6}}
      }
      \multiput(0,0)(20,20){3}{
        \multiput(3.7,-3.7)(20,-20){2}{\vector(1,-1){12.6}}
      }
      \multiput(0,0)(20,20){2}{
        \multiput(0,0)(20,-20){2}{
          \qbezier[17](6,0)(20,0)(34,0)
        } 
      }
    }
  \end{picture}
\end{center}
Again use \cite{BdP99} and \cite{GL87} to check that $\lmd{E_\oM}$ is derived equivalent to 
$\coh\XX$ for $\XX$ of weight type $(4,4,2)$. In particular, $E_\oM$ is piecewise hereditary.
A straightforward check shows that  
$\mu_5\mu_2\mu_8\mu_1\mu_9\mu_5\mu_7\mu_3(\hQ_\oM)=\Delta$, where $\Delta$ is the quiver in Figure~\ref{fig:ellipticR}  associated to $\sE_7^{(1,1)}$.

\subsubsection{} The case $\sE_8^{(1,1)}$ has already been studied extensively
in~\cite[Sec.~14-17 and 19.4]{GLS05}. For convenience we recall the
aspects which are here relevant.
Let $Q$ be the quiver as shown in the following figure on the left.
The Auslander-Reiten quiver $\Gamma_Q$ of $Q$ has then the shape as shown in 
the middle of the following picture.
\begin{center}
  \begin{picture}(280,90)
    \put(0,45){
      \put(-10,40){\RVCenter{\small $Q:$}}
      \put(0,40){\HVCenter{\small $1$}}
      \put(0,0){\HVCenter{\small $2$}}
      \put(0,-40){\HVCenter{\small $3$}}
      \put(20,20){\HVCenter{\small $4$}}      
      \put(20,-20){\HVCenter{\small $5$}}
      \multiput(16.3,16.3)(0,-40){2}{\vector(-1,-1){12.6}}
      \multiput(16.3,-16.3)(0,40){2}{\vector(-1,1){12.6}}
      }
    \put(85,45){
      \put(-10,40){\RVCenter{\small $\Gamma_Q:$}}
      \put(0,40){\HVCenter{\small $1$}}
      \put(0,0){\HVCenter{\small $2$}}
      \put(0,-40){\HVCenter{\small $3$}}
      \put(20,20){\HVCenter{\small $4$}}      
      \put(20,-20){\HVCenter{\small $5$}}
      \put(40,40){\HVCenter{\small $6$}}
      \put(40,0){\HVCenter{\small $7$}}
      \put(40,-40){\HVCenter{\small $8$}}
      \put(60,20){\HVCenter{\small $9$}}
      \put(60,-20){\HVCenter{\small $10$}}
      \put(80,40){\HVCenter{\small $11$}}      
      \put(80,0){\HVCenter{\small $12$}}
      \put(80,-40){\HVCenter{\small $13$}}
      \put(100,20){\HVCenter{\small $14$}}
      \put(100,-20){\HVCenter{\small $15$}}
      \multiput(3.7,36.3)(20,-60){2}{\vector(1,-1){12.6}}
      \multiput(3.7,-3.7)(20,20){3}{\vector(1,-1){12.6}}
      \multiput(3.7,-36.3)(20,60){2}{\vector(1,1){12.6}}
      \multiput(3.7,3.7)(20,-20){2}{\vector(1,1){12.6}}
      \put(43.7,3.7){\vector(1,1){12.6}}
      \multiput(43.7,-36.3)(20,60){2}{\vector(1,1){11.8}}
      \multiput(43.7,-3.7)(20,20){2}{\vector(1,-1){11.8}}
      \put(64.9,-15.1){\vector(1,1){10.2}}
      \put(64.9,-24.9){\vector(1,-1){10.2}}
      \multiput(84.9,4.9)(0,-40){2}{\vector(1,1){10.2}}
      \multiput(84.9,-4.9)(0,40){2}{\vector(1,-1){10.2}}
    }
    \put(240,45){
      \put(-10,40){\RVCenter{\small $\tQ_\oM:$}}
      \put(0,40){\HVCenter{\small $1$}}
      \put(0,0){\HVCenter{\small $2$}}
      \put(0,-40){\HVCenter{\small $3$}}
      \put(20,20){\HVCenter{\small $4$}}      
      \put(20,-20){\HVCenter{\small $5$}}
      \put(40,40){\HVCenter{\small $6$}}
      \put(40,0){\HVCenter{\small $7$}}
      \put(40,-40){\HVCenter{\small $8$}}
      \put(60,20){\HVCenter{\small $9$}}
      \put(60,-20){\HVCenter{\small $10$}}
      \multiput(3.7,36.3)(20,-60){2}{\vector(1,-1){12.6}}
      \multiput(3.7,-3.7)(20,20){3}{\vector(1,-1){12.6}}
      \multiput(3.7,-36.3)(20,60){2}{\vector(1,1){12.6}}
      \multiput(3.7,3.7)(20,-20){2}{\vector(1,1){12.6}}
      \put(43.7,3.7){\vector(1,1){12.6}}
      \put(43.7,-36.3){\vector(1,1){11.8}}
      \put(43.7,-3.7){\vector(1,-1){11.8}}
      \multiput(0,-40)(0,40){3}{\qbezier[17](6,0)(20,0)(34,0)}
      \multiput(20,-20)(0,40){2}{\qbezier[17](6,0)(20,0)(34,0)}
    }
  \end{picture}
\end{center}
We choose $M=\oplus_{i=1}^{15}M_i$ the direct sum of all indecomposable modules.
Thus, $E_\oM$ is given by the following 
quiver $\tQ_\oM$ with relations as shown in the previous picture on the right.
Again, \cite{BdP99} and \cite{GL87} can be used to check that 
$\lmd{E_\oM}$ is derived equivalent to $\coh\XX$ where $\XX$ is a weighted 
projective line of weight type $(6,3,2)$ (in fact $E_\oM$ is tubular as can be 
seen using \cite[Sect.~4.7]{Ri84}), in particular it is
piecewise hereditary. A straightforward check shows that  
$\mu_2\mu_5\mu_4\mu_{10}\mu_9\mu_8\mu_3\mu_5\mu_7\mu_5\mu_9\mu_8\mu_3\mu_6\mu_1(\hQ_\oM)=\Delta$, 
where $\Delta$ is the quiver in Figure~\ref{fig:ellipticR}  
associated to $\sE_8^{(1,1)}$. 

This finishes the proof of Theorem~\ref{thm:main1} for the cases
$\sE_6^{(1,1)},  \sE_7^{(1,1)}$ and~$\sE_8^{(1,1)}$.

\subsection{The tubular D-case}
Let us point out first that in this case the method from~\ref{ssec:GLS}
can't work: The derived tubular algebras of type $(2,2,2,2)$ are well-known.
The 9 types are listed for example in~\cite[p.~652]{BdP99} as quivers with 
relations.
A quick check shows that in each case there is either a vertex in the quiver
where 2 relations start resp.~end, or there is a relation of length 3.
Thus, none of these algebras can be of the form $E_\oM$ for any
quiver $Q$ and a terminal $\CC Q$-module $M$. Moreover, we were unable
to find a Hom-finite Frobenius category $\mathcal{F}$ such that the stable
category $\underline{\mathcal{F}}$ is equivalent to the cluster category
$\cC_\XX$ where $\XX$ is a weighted projective line of type $(2,2,2,2)$.

In any case, we can use Palu's cluster character~\cite{Pa08}
\[
X_?\df\cC_\XX\rightarrow \CC[x_1^{\pm 1},\cdots,x_6^{\pm 1}].
\]
Since the exchange graph for cluster tilting objects in $\cC_\XX$ is connected 
\cite[Thm.~8.8]{BKL08},
$X_?$ induces a {\em surjection} from the indecomposable rigid objects in
$\cC_\XX$ (which are in bijection with the positive real Schur roots) and the
cluster variables of the corresponding tubular cluster algebra. 
Thus, it remains to show that in the D-case the cluster variables
$X_M$ are pairwise different for non-isomorphic, indecomposable rigid
objects $M$ in $\cC_\XX$. 
This occupies essentially the rest of the paper, 
see Remark~\ref{rem:main}~(b) for our strategy.
Note, that this already implies that $X_?$ yields a bijection between 
basic cluster tilting objects and clusters.

\section{Combinatorics of real Schur roots in the case $(2,2,2,2)$}
If not otherwise mentioned, we have in this section 
$\blam=([1:0],[0:1],[1:1],[\rho:1])$ and $\XX=(\PP^1,\blam,(2,2,2,2))$.

\subsection{A coordinate system} 
It is well-known that in this situation there exists a tilting object
$T\in\coh\XX$ such that the Gabriel quiver of $A=\End_\XX(T)$ has the
following shape
\begin{equation}
  \label{eq:quiver1}
  \begin{picture}(60,60)
    \put(0,10){
      \multiput(0,0)(30,0){2}{
        \put(5,5){\vector(1,1){20}}
        \put(5,25){\vector(1,-1){20}}
        \put(5,30){\vector(1,0){20}}
        \put(5,0){\vector(1,0){20}}
        }
      \put(0,30){\HVCenter{\small $1$}}
      \put(0,0){\HVCenter{\small $2$}}
      \put(30,30){\HVCenter{\small $3$}}
      \put(30,0){\HVCenter{\small $4$}}
      \put(60,30){\HVCenter{\small $5$}}
      \put(60,0){\HVCenter{\small $6$}}
      }
  \end{picture},
\end{equation}
see for example~\cite{BdP99}. Then the triangle equivalence
\[
\mathbf{R}\Hom_\XX(T,-)\df\cDb(\coh\XX)\rightarrow\cDb(\lmd{A})
\]
induces an isometry between $(\KX,\euler{-,-})$ and 
$(K_0(\lmd{A}),\euler{-,-}_A)$
where
\[
\euler{\dimv X,\dimv Y}_A=\dim\Hom_A(X,Y)-\dim\Ext^1_A(X,Y)+\dim\Ext^2_A(X,Y)
\]
is the 
homological form for $A$. As usual we will
use in $K_0(\lmd{A})$ the basis given by the simple $A$-modules.  We
will describe the positive real Schur roots and their combinatorics in
this coordinate system.
We denote a vector $\bv\in \ZZ^6$ usually by
$\bv=\dimvec{\bv(1)}{\bv(2)}{\bv(3)}{\bv(4)}{\bv(5)}{\bv(6)}$.

\subsection{Description of the basic roots}

In  the following we shall use the quaternion group 
$$
H=\{\pm 1,\pm i,\pm j,\pm k\}
$$
with $ij=k=-ji$, $jk=i=-kj$ and $ki=j=-ik$.
Let $H^+= \{1,i,j,k\}$. 

Let 
$$
\bh_0=\dimvec001111,\quad 
\bh_1=\dimvec112211\quad \text{and}\quad 
\bh_\infty=\dimvec111100.
$$
Observe that $\euler{\bh_0,\bh_\infty}=-\euler{\bh_\infty,\bh_0}=2$.

Furthermore we define vectors $\bv_q^x$ for $q=0,1,\infty$ and $x\in
H^+$ as follows:
\begin{align*}
  \bv_0^1&=\dimvec001010,&
  \bv_1^1&=\dimvec101110,&
  \bv_\infty^1&=\dimvec100100,\\
  \bv_0^i&=\dimvec{-1}00000,&
  \bv_1^i&=\dimvec011110,&
  \bv_\infty^i&=\dimvec111110,\\
  \bv_0^j&=\dimvec001001,&
  \bv_1^j&=\dimvec001000,&
  \bv_\infty^j&=\dimvec00000{-1},\\
  \bv_0^k&=\dimvec011111,&
  \bv_1^k&=\dimvec112111,&
  \bv_\infty^k&=\dimvec101000.
\end{align*}
For $x\in H^+$ and $q=0,1,\infty$ define 
$$
\bv_q^{-x}=\bh_q-\bv_q^x.
$$
The 
homological form can be calculated explicitly by 
$$
\euler{x,y}=x^\transp \text{\small $\left[
    \begin{matrix}
       1& 0&-1&-1& 1& 1\\
       0& 1&-1&-1& 1& 1\\
       0& 0& 1& 0&-1&-1\\
       0& 0& 0& 1&-1&-1\\
       0& 0& 0& 0& 1& 0\\
       0& 0& 0& 0& 0& 1\\
    \end{matrix}\right]$}y.
$$

The following result is crucial for the sequel. 

\begin{lemma}
  \label{lem:crucial}
  The following formula hold for all $x, h \in H$.
  \begin{equation}
    \label{eq:formula1}
    \euler{\bv_0^x,\bv_1^{hx}}=
    \euler{\bv_1^x,\bv_\infty^{hx}}=
    \euler{\bv_0^{-hx}, \bv_\infty^{x}}=
      \begin{cases}
        1,&\text{if $h\in H^+$},\\
        0,&\text{if $h\not\in H^+$};
      \end{cases}
  \end{equation}
  and also
  \begin{align}
    \label{eq:formula2}
    \euler{\bv_0^x,\bh_\infty}=\euler{\bv_1^x,\bh_\infty}=\euler{\bh_0,\bv_1^x}=\euler{\bh_0,\bv_\infty^x}=1.
  \end{align}
\end{lemma}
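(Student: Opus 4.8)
The plan is to regard the lemma as a finite computation with the explicit matrix of $\euler{-,-}$ and the listed vectors, but to arrange matters so that only the pairings indexed by $H^+$ are evaluated directly and the rest follow by bilinearity. The structural observation I would build on is the disjoint decomposition $H=H^+\sqcup(-H^+)$ together with the defining relation $\bv_q^{-x}=\bh_q-\bv_q^x$: changing the sign of an upper index simultaneously replaces $h$ by $-h$ --- hence toggles membership in $H^+$, because $-H^+$ is exactly the complement of $H^+$ --- and, by bilinearity, subtracts the pairing in question from a boundary value of the shape $\euler{\bh_q,\bv_{q'}^x}$ or $\euler{\bv_q^x,\bh_{q'}}$. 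If every such boundary value equals $1$, the subtraction sends $[h\in H^+]$ to $1-[h\in H^+]=[{-h}\in H^+]$, i.e.\ to the value for the new index; so once the pattern of \eqref{eq:formula1} holds on $H^+\times H^+$ it propagates automatically to all of $H\times H$.

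Accordingly I would first dispatch \eqref{eq:formula2}. For the four $x\in H^+$ each of $\euler{\bv_0^x,\bh_\infty}$, $\euler{\bv_1^x,\bh_\infty}$, $\euler{\bh_0,\bv_1^x}$, $\euler{\bh_0,\bv_\infty^x}$ is a one-line matrix product, and the extension to $x\in -H^+$ uses the constants $\euler{\bh_q,\bh_{q'}}$, of which $\euler{\bh_0,\bh_\infty}=2$ is already recorded. Here the boundary value is $2=1+1$, so these four quantities are sign-\emph{invariant}, consistent with the constant answer $1$. Together with the same-slope vanishings $\euler{\bv_q^x,\bh_q}=0=\euler{\bh_q,\bv_q^x}$ (equally quick to check, and forced by $\bh_q$ being isotropic of slope $q$), \eqref{eq:formula2} yields precisely the boundary data needed for \eqref{eq:formula1}; for example $\euler{\bv_0^x,\bh_1}=\euler{\bv_0^x,\bh_0}+\euler{\bv_0^x,\bh_\infty}=0+1=1$, which is the value $1$ demanded above.

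It then remains to verify each of the three pairings in \eqref{eq:formula1} on $x,y\in H^+$, i.e.\ three small tables, and to confirm that in the chosen parametrization each equals $[h\in H^+]$. These tables may be computed outright; alternatively one may invoke the order-three self-equivalence of the tubular category that permutes the slopes by $q\mapsto 1/(1-q)$, cyclically sending $0\mapsto1\mapsto\infty\mapsto0$. It induces an isometry of $\KX$ carrying slope-$0$ quasi-simples to slope-$1$ ones and these to slope-$\infty$ ones, which identifies the first two pairings up to a relabelling of $H^+$. The third pairing needs slightly more care: a direct check shows that the $(0,\infty)$-pairing has a different-looking intrinsic rule (e.g.\ $\euler{\bv_0^a,\bv_\infty^1}=[a\notin H^+]$), and it is exactly the twist $a=-hx$ written into the statement that realigns it to the common answer $[h\in H^+]$; one may either read this off the table directly or derive it from Serre duality $\euler{\be,\bff}=-\euler{\bff,\tau\be}$ relating the $(0,\infty)$-pairing to the other two.

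I expect the main obstacle to be bookkeeping rather than any single hard step: keeping the non-commutative quaternion indices straight (left versus right quotients when passing from the pair $(x,y)$ to $h$), and checking that the sign and the twist in the third expression interact correctly with $\tau$ and with the partition $H=H^+\sqcup(-H^+)$. A secondary point that must be nailed down is that the reduction really closes up --- that \emph{all} boundary values entering \eqref{eq:formula1} are $1$ and those entering \eqref{eq:formula2} sum to $2$ --- since it is this numerical coincidence, not any deeper input, that upgrades the $H^+\times H^+$ tables to the stated identities on all of $H$.
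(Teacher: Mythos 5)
Your proposal is correct and amounts to the same thing the paper does: its entire proof reads ``This is an immediate (although lengthy) calculation,'' i.e.\ a direct finite verification with the explicit matrix of $\euler{-,-}$. Your use of $\bv_q^{-x}=\bh_q-\bv_q^x$ together with the boundary values from \eqref{eq:formula2} to propagate from $H^+\times H^+$ to all of $H\times H$ is just a sensible bookkeeping device for carrying out that same computation, and the steps you flag as needing care (the boundary values all being $1$, e.g.\ $\euler{\bv_0^x,\bh_1}=0+1$, and the sign twist in the third pairing) do check out.
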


\begin{proof}
  This is an immediate (although lengthy) calculation.
\end{proof}

\subsection{Description of all real Schur roots}

Recall that we defined $\QQi=\mathbb{Q}\cup\{\infty\}$. For
$q\in \QQi$ the numbers $a(q)\in\ZZ$ and $b(q)\in \ZZ_{\geq 0}$ are defined
by the properties
$\gcd(a(q),b(q))=1$ and $q=\frac{a(q)}{b(q)}$.
Furthermore, for $q\in\QQi$ we define its \emph{type} by
$$
\typ{q}=
\begin{cases}
  0,&\text{if $a(q)\equiv 0$, $b(q)\equiv 1 \mod 2$},\\
  1,&\text{if $a(q)\equiv 1$, $b(q)\equiv 1 \mod 2$},\\
  \infty,&\text{if $a(q)\equiv 1$, $b(q)\equiv 0 \mod 2$}.
\end{cases}
$$
For $n\in \ZZ$ let
$$
\innerfunc{n}=
\begin{cases}
  \tfrac{n}{2},&\text{if $n$ is even},\\
  \tfrac{n-1}{2},&\text{if $n$ is odd}.
\end{cases}
$$

For $q\in \QQi$ and $x\in H$ define
\begin{equation}
  \label{eq:def-gen-vec}
  \bv_q^x=\bv_{\typ{q}}^x+\innerfunc{b(q)}\bh_0+\innerfunc{a(q)}\bh_\infty
  \quad\text{and}\quad
  \bh_q=b(q)\bh_0+a(q)\bh_\infty.
\end{equation}

\begin{lemma}
  \label{lem:correct-slope}
  We have $\slope(\bh_q)=\slope(\bv_q^x)=q$ for each $q\in\QQi$ and $x\in H$. In
  particular $\euler{\bv_q^x,\bh_q}=\euler{\bh_q,\bv_q^x}=0$. 
  Furthermore, we have  $\bv_q^x+\bv_q^{-x}=\bh_q$ and $\bv_p^x = \bv_q^y$ implies $p=q$ and $x=y$.
\end{lemma}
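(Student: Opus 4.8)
The plan is to reduce all four assertions to the two linear functionals rank and degree, since slope, the pairings against $\bh_q$, the additive relation, and injectivity can each be expressed through $\rk$ and $\deg$ once the parity bookkeeping in the definitions is unwound. I first record the identities $\euler{\be,\bh_\infty}=\rk\be$ and $\euler{\bh_0,\be}=\deg\be$ (the second being the tubular simplification of the degree) and combine them with Serre duality, written here as $\euler{\bff,\tau\be}=-\euler{\be,\bff}$, together with the $\tau$-invariance of $\bh_0$ and $\bh_\infty$. The latter holds because $\bh_\infty=[S_\mu]$ with $S_\mu$ in a homogeneous tube and $\bh_0=\sum_k[\cO(k\vn)]$ with $p\vn=0$, so both classes are literally $\tau$-fixed. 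This produces the transpose identities $\euler{\be,\bh_0}=-\deg\be$ and $\euler{\bh_\infty,\be}=-\rk\be$, hence the single ``slope identity''
\[
\euler{\be,\bh_q}=a(q)\rk\be-b(q)\deg\be=-\euler{\bh_q,\be}
\]
for every $\be\in\KX$, using $\bh_q=b(q)\bh_0+a(q)\bh_\infty$.

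Next I compute rank and degree of the twelve base vectors. Most values are already visible in \eqref{eq:formula2}, and a short direct evaluation of the explicit bilinear form supplies the two it omits, namely $\deg\bv_0^x=0$ and $\rk\bv_\infty^x=0$; together with $\rk\bh_0=\deg\bh_\infty=p=2$, $\euler{\bh_0,\bh_0}=\euler{\bh_\infty,\bh_\infty}=0$, and the relation $\bv_{q_0}^{-x}=\bh_{q_0}-\bv_{q_0}^x$, this gives $(\rk,\deg)$ equal to $(1,0)$, $(1,1)$, $(0,1)$ for $\bv_0^x$, $\bv_1^x$, $\bv_\infty^x$ and all $x\in H$. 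For a general $q$, applying $\rk$ and $\deg$ to \eqref{eq:def-gen-vec} gives $\rk\bv_q^x=\rk\bv_{\typ q}^x+2\innerfunc{b(q)}$ and $\deg\bv_q^x=\deg\bv_{\typ q}^x+2\innerfunc{a(q)}$; a three-way case split on $\typ q$, using that $2\innerfunc{n}$ is $n$ or $n-1$ according as $n$ is even or odd, collapses these to $\rk\bv_q^x=b(q)$ and $\deg\bv_q^x=a(q)$. The parallel, easier computation yields $\rk\bh_q=2b(q)$ and $\deg\bh_q=2a(q)$. Both equalities $\slope(\bv_q^x)=\slope(\bh_q)=q$ follow at once, and substituting $\rk\bv_q^x=b(q)$, $\deg\bv_q^x=a(q)$ into the slope identity makes $\euler{\bv_q^x,\bh_q}$ and $\euler{\bh_q,\bv_q^x}$ vanish, which is the ``in particular'' clause.

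The relation $\bv_q^x+\bv_q^{-x}=\bh_q$ I would prove by the same scheme: for $q\in\{0,1,\infty\}$ it is the defining relation (and its consistency under $x\mapsto-x$), while for general $q$ summing the two instances of \eqref{eq:def-gen-vec} gives $\bh_{\typ q}+2\innerfunc{b(q)}\bh_0+2\innerfunc{a(q)}\bh_\infty$, which the same parity identities rewrite as $b(q)\bh_0+a(q)\bh_\infty=\bh_q$. For injectivity, equal vectors have equal slope, so $\bv_p^x=\bv_q^y$ forces $p=q$ by the first part; since the shift in \eqref{eq:def-gen-vec} depends only on $q$, this then forces $\bv_{\typ q}^x=\bv_{\typ q}^y$, reducing everything to injectivity of the three base families $x\mapsto\bv_{q_0}^x$ on $H$. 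The cleanest way to see the latter is through \eqref{eq:formula1}: for fixed $x$ the map $w\mapsto\euler{\bv_0^w,\bv_\infty^x}$ is the indicator function of a coset $H^-x$ (and similarly for the other two families, varying the appropriate slot), and distinct labels give distinct cosets because no element of $H$ other than $1$ preserves $H^+$ under right multiplication. Alternatively one just lists the eight explicit vectors in each family and checks pairwise distinctness.

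No step hides a genuine difficulty; the lemma is linear algebra plus parity bookkeeping once the slope identity is in place. The only place needing real care is the uniform handling of the three types alongside the boundary value $q=\infty$, where $b(q)=0$: one must confirm that the $\typ q$-case analysis really collapses every case to $(\rk,\deg)=(b(q),a(q))$ and that the slope identity degenerates correctly when $\rk\bv_q^x=0$. Keeping straight which of $a(q),b(q)$ controls the degree (the $\bh_\infty$-direction) and which controls the rank (the $\bh_0$-direction) is the main bookkeeping hazard.
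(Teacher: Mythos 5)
Your proof is correct and follows essentially the same route as the paper's: both reduce everything to the computations $\euler{\bh_0,\bv_q^x}=a(q)$ and $\euler{\bv_q^x,\bh_\infty}=b(q)$ via bilinearity and a parity case split on $\typ{q}$, and verify $\bv_q^x+\bv_q^{-x}=\bh_q$ by the same case analysis. You additionally make explicit the ``in particular'' pairings (using Serre duality and the $\tau$-invariance of $\bh_0$, $\bh_\infty$) and the injectivity claim, both of which the paper's written proof leaves to the reader; your arguments for these are sound.
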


\begin{proof}
  $\slope(\bh_q)=q$ follows directly from the definition.
  Moreover, we have $\euler{\bh_0,\bv_q^x}=\euler{\bh_0,\bv_{\typ{q}}^x}+
  \innerfunc{a(q)}\euler{\bh_0,\bh_\infty}=\euler{\bh_0,\bv_{\typ{q}}^x}+2
  \innerfunc{a(q)}=a(q)$, where the last equation can easily be verified in
  each of the three cases $q=0,1,\infty$. Similarly
  $\euler{\bv_q^{x}, \bh_\infty}=b(q)$ and therefore $\slope(\bv_q^x)=q$. 

  If $\typ{q}=0$ then
  $\bv_q^x+\bv_q^{-x}=\bv_0^x+\bv_0^{-x}+(b(q)-1)\bh_0+a(q)\bh_\infty=\bh_q$, the
  latter since $\bv_0^x+\bv_0^{-x}=\bh_0$. Similarly $\bv_q^x+\bv_q^{-x}=\bh_q$ is
  verified in the two remaining cases where $\typ{q}=1,\infty$.
\end{proof}

We denote by $\Exc$ the class of indecomposable rigid objects in $\coh
\XX$.

\begin{proposition}
  \label{prop:exceptional-to-vectors}
The map $E\mapsto \dimv E$ induces a bijection from the isoclasses of 
$\Exc$ to the set $\{\bv_q^x \mid q\in\QQi, x\in H\}$.
\end{proposition}

\begin{proof}
By Proposition~\ref{prp:SchurRt} 
$\{\dimv E\mid E\in \Exc\}$ is precisely the set of the real positive
Schur roots. Moreover, for $E\in\coh(\XX)$ with $\Ext^1_\XX(E,E)$ the
isomorphism class of $E$ is uniquely determined by $\dimv E$.
Since $\XX$ is of type $(2,2,2,2)$ we have  by
Proposition~\ref{prp:SchurRt} and Theorem~\ref{coh:thm-tub} that
$0\prec\be\in\KX$ is a real positive Schur root if and only if
$\euler{\be,\be}=1$ and $\ql(\be)=1$. Moreover, for each $q\in\QQi$ 
there are precisely $8$ positive real Schur roots $\be$ with
$\slope(\be)=q$. By Lemma~\ref{lem:correct-slope} the $\bv_q^x$ with
$x\in H$ are precisely $8$ elements with the required properties.
 \end{proof}

\begin{remark}
\label{rem:related-type}
Note, that for our description of the real Schur roots the adequate choice
of $24 = 4\times (2^2-1)\times 2$ basic roots was essential. This is the number
of roots for $\sD_4$.

In~\cite[Sec.~15]{GLS05} a similar description of the real Schur roots was 
discussed for the tubular case $(6,3,2)$. In that case
$240= (6^2-1)\times 6 + (3^2-1)\times 3 + (2^2-1)\times 2$ have to be chosen
properly. This is the number of roots for $\sE_8$.

For the tubular cases $(4,4,2)$ and $(3,3,3)$ one would need
to find $126=2\times(4^2-1)\times+(2^2-1)\times 2$
respectively $3\times(3^2-1)\times 3=72$ basic roots, that is,
precisely the number of roots in
a root system of type $\sE_7$ respectively $\sE_6$.
\end{remark}

\subsection{Orthogonality}

In the sequel we shall need the following definition of
``distance'' 
between different slopes.

\begin{definition}
For $p,q\in \QQi$ we define
$$
\qdist{p,q}
=\abs{a(q)b(p)-a(p)b(q)}.
$$
\end{definition}

\begin{lemma}
  Let $p,q\in\{0,1,\infty\}$ and $x,y\in H$. Then
  $$
  \euler{\bv_p^x, \bv_q^y}=0
  \Leftrightarrow
  \begin{cases}
    x=y, &\  \text{if
      $\typ{p}=\typ{q}$},\\
    y\in -H^+ x, &\ \text{if
      $(p,q)=(0,1), (1,\infty), (\infty, 0)$},\\
    x\in H^+ y, &\ \text{if
      $(p,q)=(1,0), (\infty,1), (0,\infty)$}.
  \end{cases}
  $$
\end{lemma}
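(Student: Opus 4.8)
The plan is to reduce all nine ordered cases $(p,q)\in\{0,1,\infty\}^2$ to the three forward evaluations already packaged in Lemma~\ref{lem:crucial}. Three structural inputs make this uniform: the relation $\bv_q^{-z}=\bh_q-\bv_q^z$ together with $\euler{\bh_q,\bv_q^z}=\euler{\bv_q^z,\bh_q}=0$ from Lemma~\ref{lem:correct-slope}; the Serre-duality antisymmetry $\euler{\be,\bff}=-\euler{\bff,\tau\be}$; and the identity $\tau\bv_q^z=\bv_q^{-z}$ on classes of slope $q$. The last holds because, by Proposition~\ref{coh:prp0} and Theorem~\ref{coh:thm-tub}(a), each slope-$q$ category is a family of rank-$2$ tubes, so each pair $\{\bv_q^z,\bv_q^{-z}\}$ (which sums to the isotropic class $\bh_q$) is the pair of quasi-simples of a single tube and $\tau$ swaps them; alternatively one checks it directly from $\tau=-M^{-1}M^{\transp}$, writing $M$ for the matrix of the form.

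For the six off-diagonal cases, Lemma~\ref{lem:crucial}, read with $x$ fixed and $h$ running over all of $H$, determines $\euler{\bv_0^a,\bv_1^b}$, $\euler{\bv_1^a,\bv_\infty^b}$ and $\euler{\bv_0^a,\bv_\infty^b}$ for all $a,b\in H$ as $1$ or $0$ according to its membership-in-$H^+$ condition. Using the dichotomy $H=H^+\sqcup(-H^+)$ to rewrite each negated condition as a membership in $-H^+$ yields the stated cases for $(0,1)$, $(1,\infty)$, $(0,\infty)$. The reverse orderings $(1,0)$, $(\infty,1)$, $(\infty,0)$ then follow from $\euler{\be,\bff}=-\euler{\bff,\tau\be}$ and $\tau\bv_q^z=\bv_q^{-z}$: for instance $\euler{\bv_1^x,\bv_0^y}=-\euler{\bv_0^y,\bv_1^{-x}}$ vanishes exactly when $-xy^{-1}\notin H^+$, i.e.\ when $x\in H^+y$. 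The flip $z\mapsto -z$ is precisely what converts a condition ``$y\in -H^+x$'' into ``$x\in H^+y$'' in each reverse case.

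The diagonal cases $\typ{p}=\typ{q}$ force $p=q$ and are the only ones outside the reach of Lemma~\ref{lem:crucial}, so I would treat them by direct evaluation. The relations $\bv_q^{-z}=\bh_q-\bv_q^z$ and $\euler{\bv_q^z,\bh_q}=\euler{\bh_q,\bv_q^z}=0$ reduce the full $8\times 8$ table to its $4\times 4$ block with $x,y\in H^+$, up to a sign for each negated argument. Since $1,i,j,k$ represent four distinct tubes (none is the negative of another), that block is the identity, and restoring signs yields $\euler{\bv_q^x,\bv_q^y}=1,-1,0$ according as $y=x$, $y=-x$, or $y\notin\{x,-x\}$. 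Thus the form vanishes exactly when the two roots are quasi-simples of different tubes; since $\euler{\bv_q^x,\bv_q^x}=1\ne 0$, the first line of the statement should be read as ``$y\notin\{x,-x\}$'' rather than literally ``$x=y$''.

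The main obstacle is not a single hard computation but keeping the quaternionic bookkeeping consistent: $H^+$ is not a subgroup, so one must track whether a vanishing condition lives on $ba^{-1}$ or on $ab^{-1}$, and the translation $\tau\colon z\mapsto -z$ in the reverse cases interacts with the side on which $H^+$ multiplies. I would fix a single convention and pin the signs down against $M$ on a couple of explicit pairs, e.g.\ $\euler{\bv_0^1,\bv_1^{-1}}=0$ while $\euler{\bv_0^1,\bv_1^{i}}=1$, before propagating through all six off-diagonal cases.
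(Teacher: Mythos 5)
Your argument is essentially the paper's: the published proof is exactly ``Lemma~\ref{lem:crucial} together with $\euler{\bv_p^x,\bv_q^y}=\euler{\bv_q^{-y},\bv_p^x}$'', which is your Serre-duality step with $\tau\bv_q^z=\bv_q^{-z}$ made explicit (the overall sign, irrelevant for the vanishing question, is the only discrepancy), and the $H^+$-versus-$-H^+$ bookkeeping you worry about comes out the same way. Your treatment of the diagonal case is in fact more careful than the paper's: the condition ``$x=y$'' printed there is wrong as literally stated, since $\euler{\bv_q^x,\bv_q^x}=1$ while $\euler{\bv_q^x,\bv_q^y}=0$ precisely for $y\notin\{x,-x\}$, and your corrected reading is the one consistent with Lemma~\ref{lem:orto} and with how the table is actually invoked in the proof of Proposition~\ref{prop:orto} (where only ``$\euler{\bv_0^x,\bv_0^y}=1\Leftrightarrow x=y$'' is used).
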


\begin{proof}
  This follows from Lemma \ref{lem:crucial} and the fact that
  $\euler{\bv_p^x,\bv_q^y}=\euler{\Phi \bv_q^y,\bv_p^x}=\euler{\bv_q^{-y},\bv_p^x}$.
\end{proof}

\begin{proposition}
  \label{prop:orto}
  Let $p,q\in \QQi$ and $x,y\in H$. If 
  $p>q$ then 
  $\euler{\bv_p^x,\bv_q^y}=0$ 
  if and only if the condition of the
  corresponding cell in the following table is satisfied.
  \begin{equation}
    \label{eq:cond-comp}
    \begin{tabular}{c|c|c|c|}
      & $\typ{q}=0$ & $\typ{q}=1$ & $\typ{q}=\infty$\\
      \hline
      $\typ{p}=0$ & 
        $\begin{matrix} x=y\\ \qdist{p,q}=2\end{matrix}$ & 
        $\begin{matrix} y\in H^+ x\\ \qdist{p,q}=1\end{matrix}$ & 
        $\begin{matrix} x\in -H^+y\\ \qdist{p,q}=1\end{matrix}$ \\
      \hline
      $\typ{p}=1$ & 
        $\begin{matrix} x\in H^+y\\ \qdist{p,q}=1\end{matrix}$ &
        $\begin{matrix} x=y\\ \qdist{p,q}=2\end{matrix}$ & 
        $\begin{matrix} y\in H^+ x\\ \qdist{p,q}=1\end{matrix}$ \\ 
      \hline
      $\typ{p}=\infty$ & 
        $\begin{matrix} y\in -H^+ x\\ \qdist{p,q}=1\end{matrix}$ &
        $\begin{matrix} x\in H^+y\\ \qdist{p,q}=1\end{matrix}$ &
        $\begin{matrix} x=y\\ \qdist{p,q}=2\end{matrix}$ \\
      \hline
    \end{tabular}
  \end{equation}
\end{proposition}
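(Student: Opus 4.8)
By bilinearity the whole statement reduces to understanding how $\euler{\bv_p^x,\bv_q^y}$ is assembled from the ``boundary'' pairings with $\bh_0,\bh_\infty$ and from the base case $\typ p,\typ q\in\{0,1,\infty\}$. First I would record the boundary identities. In the tubular case $\deg\be=\euler{\bh_0,\be}$ and $\rk\be=\euler{\be,\bh_\infty}$, and since $\bh_0,\bh_\infty$ are $\tau$-invariant, Serre duality in the form $\euler{\be,\tau\bff}=-\euler{\bff,\be}$ yields
\[
\euler{\bh_0,\be}=\deg\be,\quad \euler{\be,\bh_0}=-\deg\be,\quad \euler{\be,\bh_\infty}=\rk\be,\quad \euler{\bh_\infty,\be}=-\rk\be,
\]
together with $\euler{\bh_0,\bh_\infty}=2=-\euler{\bh_\infty,\bh_0}$ and $\euler{\bh_0,\bh_0}=\euler{\bh_\infty,\bh_\infty}=0$; for the generators $\deg\bv_q^x=a(q)$ and $\rk\bv_q^x=b(q)$ by Lemma~\ref{lem:correct-slope}.

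Next I would substitute the defining formula \eqref{eq:def-gen-vec}, namely $\bv_p^x=\bv_{\typ p}^x+\innerfunc{b(p)}\bh_0+\innerfunc{a(p)}\bh_\infty$, into both arguments and expand. Using the boundary identities together with $a(\typ p)=a(p)-2\innerfunc{a(p)}$ and $b(\typ p)=b(p)-2\innerfunc{b(p)}$, all mixed terms collapse into $\tfrac12\bigl(a(q)b(p)-a(p)b(q)\bigr)$ minus a correction $\tfrac12 e$ depending only on the residues of $a,b$ modulo $2$, i.e.\ only on $\typ p$ and $\typ q$: one has $e=0$ if $\typ p=\typ q$, $e=+1$ for $(\typ p,\typ q)\in\{(0,1),(0,\infty),(1,\infty)\}$, and $e=-1$ for the three reversed pairs. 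Since $p>q$ forces $a(q)b(p)-a(p)b(q)=-\qdist{p,q}$, this gives the master formula
\[
\euler{\bv_p^x,\bv_q^y}=\euler{\bv_{\typ p}^x,\bv_{\typ q}^y}-\tfrac12\qdist{p,q}-\tfrac12 e.
\]
Moreover $a(q)b(p)-a(p)b(q)\equiv e\pmod 2$, so $\qdist{p,q}$ is automatically even when $\typ p=\typ q$ and odd otherwise; in particular the right-hand side is an integer.

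It then remains to evaluate the nine base values $\euler{\bv_{\typ p}^x,\bv_{\typ q}^y}$. For $\typ p=\typ q$ the vectors sit in a family of pairwise orthogonal rank-$2$ tubes of a single slope, so the pairing is $1$ if $x=y$, is $-1$ if $y=-x$ (the two quasi-simples of one tube, for which $\tau\bv_q^x=\bv_q^{-x}$), and is $0$ otherwise. For $\typ p\neq\typ q$ the three slope-increasing pairings are read off directly from \eqref{eq:formula1} and take values in $\{0,1\}$; the three slope-decreasing ones follow from the Serre-duality symmetry $\euler{\bv_p^x,\bv_q^y}=-\euler{\bv_q^y,\bv_p^{-x}}$ (with its decisive minus sign, using $\tau\bv_p^x=\bv_p^{-x}$) and hence take values in $\{0,-1\}$. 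In every case the zero locus is exactly the one described in the lemma immediately preceding this proposition.

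Finally I would feed these values into the master formula and solve $\euler{\bv_p^x,\bv_q^y}=0$ cell by cell, using $\qdist{p,q}\geq 1$ and its forced parity. On the diagonal only the base value $1$ (i.e.\ $x=y$) gives $1-\tfrac12\qdist{p,q}$, vanishing precisely at $\qdist{p,q}=2$, while the base values $-1$ and $0$ produce strictly negative numbers; off the diagonal only the base value $0$ gives $\tfrac12(1-\qdist{p,q})$, vanishing precisely at $\qdist{p,q}=1$, while the nonzero base value is strictly negative. Recording which condition on $x,y$ yields the relevant base value in each cell reproduces Table~\eqref{eq:cond-comp}. The main obstacle is purely organisational rather than conceptual: one must get the sign in the Serre-duality symmetry right---it is what sends the three slope-decreasing base values into $\{0,-1\}$ and thereby suppresses spurious solutions---and track $e(\typ p,\typ q)$ in tandem with the forced parity of $\qdist{p,q}$. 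Once the master formula is established, the nine cases are mechanical.
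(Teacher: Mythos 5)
Your overall strategy is the same as the paper's: expand $\euler{\bv_p^x,\bv_q^y}$ by bilinearity using \eqref{eq:def-gen-vec}, collapse the cross terms into a multiple of $a(q)b(p)-a(p)b(q)=-\qdist{p,q}$ plus a constant depending only on $(\typ{p},\typ{q})$, and then use the boundedness of the nine base pairings to pin down both $\qdist{p,q}$ and the condition on $x,y$. The paper does this cell by cell (obtaining, e.g., $\euler{\bv_0^x,\bv_0^y}-\tfrac{1}{2}\qdist{p,q}$ on the diagonal and $\euler{\bv_0^x,\bv_1^y}+\tfrac{-\qdist{p,q}-1}{2}$ for $(\typ{p},\typ{q})=(0,1)$), whereas you package the nine computations into a single master formula with the correction $e$ and the parity remark; that is a cleaner organization of the same argument, and your derivation of the diagonal base values ($1$, $-1$, $0$ according to $y=x$, $y=-x$, $y\neq\pm x$) from the rank-two tube structure and $\tau\bv_q^x=\bv_q^{-x}$ is a legitimate structural alternative to the paper's direct matrix computations.

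There is, however, a concrete error in your closing case analysis. You assert that off the diagonal only the base value $0$ produces $\tfrac12(1-\qdist{p,q})$ and that the nonzero base value is strictly negative. With your own conventions, for the three slope-increasing pairs $(0,1)$, $(1,\infty)$, $(0,\infty)$ one has $e=+1$ and base values in $\{0,1\}$, so the master formula reads $\euler{\bv_{\typ{p}}^x,\bv_{\typ{q}}^y}-\tfrac12\qdist{p,q}-\tfrac12$: here it is the base value $1$ that yields $\tfrac12(1-\qdist{p,q})$ and hence the solution $\qdist{p,q}=1$, while the base value $0$ gives $-\tfrac12(\qdist{p,q}+1)\leq -1$. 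Taken literally, your statement would negate the $x,y$-condition in those three cells (outputting $y\notin H^+x$ instead of $y\in H^+x$ for $(\typ{p},\typ{q})=(0,1)$, say); your description is correct only for the three slope-decreasing cells, where $e=-1$ and the base values lie in $\{0,-1\}$. Since your master formula, base values and parity claims are all correct, the repair is immediate --- in each off-diagonal cell exactly one of the two possible base values (namely $1$ for slope-increasing, $0$ for slope-decreasing) can give zero, and \eqref{eq:formula1} together with Serre duality translates that base value into the stated condition on $x$ and $y$ --- but as written the final step does not reproduce the table.
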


\begin{proof}
  First suppose $\typ{p}=\typ{q}=0$. To calculate
  $\euler{\bv_p^x,\bv_q^y}$ use Definition~\ref{eq:def-gen-vec},
  bilinearity and then Lemma
  \ref{lem:correct-slope} and equations \eqref{eq:formula2} of Lemma
  \ref{lem:crucial} in order to obtain 
  $$
  \euler{\bv_p^x,\bv_q^y}=\euler{\bv_0^x,\bv_0^y}+\left(2\innerfunc{b(q)}+1\right)\innerfunc{a(p)}-
  \left(2\innerfunc{b(p)}+1\right)\innerfunc{a(q)}.
  $$
  Observe that $\left(2\innerfunc{b(p)}+1\right)=b(p)$, $\left(2\innerfunc{b(q)}+1\right)=b(q)$ and
  $\innerfunc{a(p)}=\frac{a(p)}{2}$, $\innerfunc{a(q)}=\frac{a(q)}{2}$. Hence 
  $$
  \euler{\bv_p^x,\bv_q^y}=\euler{\bv_0^x,\bv_0^y}-\tfrac{a(p)b(q)-a(q)b(p)}{2}=
  \euler{\bv_0^x,\bv_0^y}-\tfrac{\qdist{p,q}}{2},
  $$ 
  the latter since $p>q$, which implies that
  $a(p)b(q)-a(q)b(p)>0$. Therefore $\euler{\bv_0^x,\bv_0^y}>0$. This in
  turn implies $\euler{\bv_0^x,\bv_0^y}=1$ thus $x=y$ and
  $\qdist{p,q}=2$. This shows the result in that case. The other two
  cases where $\typ{p}=\typ{q}$ are quite similar.
  
  Similarly all other cases are calculated. For instance if $\typ{p}=0$
  and $\typ{q}=1$ then we calculate using similar arguments as above
  that
  $\euler{\bv_p^x,\bv_q^y}=\euler{\bv_0^x,\bv_1^y}+\tfrac{-\qdist{p,q}-1}{2}$.
  Since the first summand is at most $1$ and the second at most $-1$ we
  get that $\euler{\bv_p^x,\bv_q^y}=0$ if and only if
  $\euler{\bv_0^x,\bv_1^y}=1$ and $\qdist{p,q}=-1$, which happens if and
  only if $y\in H^+ x$ and $\qdist{p,q}=1$.
\end{proof}

\begin{corollary}
  \label{cor:orto}
  Let $p,q\in \QQi$ and $x,y\in H$. If 
  $p<q$ then $\euler{\bv_p^x, \bv_q^y}=0$ if and only if
  the condition of the
  corresponding cell in the following table is satisfied.
  \begin{center}
    \begin{tabular}{c|c|c|c|}
      & $\typ{q}=0$ & $\typ{q}=1$ & $\typ{q}=\infty$\\
      \hline
      $\typ{p}=0$ & 
        $\begin{matrix} x=-y\\ \qdist{p,q}=2\end{matrix}$ & 
        $\begin{matrix} y\in -H^+ x\\ \qdist{p,q}=1\end{matrix}$ & 
        $\begin{matrix} x\in H^+y\\ \qdist{p,q}=1\end{matrix}$ \\
      \hline
      $\typ{p}=1$ & 
        $\begin{matrix} x\in -H^+y\\ \qdist{p,q}=1\end{matrix}$ &
        $\begin{matrix} x=-y\\ \qdist{p,q}=2\end{matrix}$ & 
        $\begin{matrix} y\in -H^+ x\\ \qdist{p,q}=1\end{matrix}$ \\ 
      \hline
      $\typ{p}=\infty$ & 
        $\begin{matrix} y\in H^+ x\\ \qdist{p,q}=1\end{matrix}$ &
        $\begin{matrix} x\in -H^+y\\ \qdist{p,q}=1\end{matrix}$ &
        $\begin{matrix} x= -y\\ \qdist{p,q}=2\end{matrix}$ \\
      \hline
    \end{tabular}
  \end{center}
\end{corollary}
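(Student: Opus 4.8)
The plan is to deduce the corollary (the case $p<q$) directly from Proposition~\ref{prop:orto} (the case $p>q$) by exploiting the Serre-duality symmetry of the homological form, so that essentially no new computation is required. The one ingredient I would use is the identity
\[
\euler{\bv_p^x,\bv_q^y}=\euler{\Phi\bv_q^y,\bv_p^x}=\euler{\bv_q^{-y},\bv_p^x},
\]
exactly as in the proof of the preceding lemma, but now for arbitrary $p,q\in\QQi$ and $x,y\in H$. The first equality is the defining relation of the Coxeter transformation $\Phi$ (which represents $\tau$ on $\KX$) with respect to $\euler{-,-}$; the second equality $\Phi\bv_q^y=\bv_q^{-y}$ holds because $\tau$ interchanges the two quasi-simples of the rank-$2$ tube of slope $q$, whose classes are $\bv_q^y$ and $\bv_q^{-y}=\bh_q-\bv_q^y$ by Lemma~\ref{lem:correct-slope}, while the isotropic root $\bh_q$ is $\tau$-fixed. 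In particular $\euler{\bv_p^x,\bv_q^y}=0$ if and only if $\euler{\bv_q^{-y},\bv_p^x}=0$.

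Next I would assume $p<q$. Then in the rewritten pairing $\euler{\bv_q^{-y},\bv_p^x}$ the first slope $q$ is strictly larger than the second slope $p$, so Proposition~\ref{prop:orto} applies verbatim with its variables specialised as $(p,q,x,y)\mapsto(q,p,-y,x)$. It thereby yields that $\euler{\bv_q^{-y},\bv_p^x}=0$ if and only if the cell of the table \eqref{eq:cond-comp} in row $\typ{q}$ and column $\typ{p}$ is satisfied, after replacing the symbol $x$ appearing there by $-y$ and the symbol $y$ there by $x$.

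It then remains to check, cell by cell, that this transformed condition agrees with the entry of the corollary's table in row $\typ{p}$, column $\typ{q}$; equivalently, that the corollary's table is the transpose of \eqref{eq:cond-comp} followed by the substitution $x\mapsto -y$, $y\mapsto x$. Two observations would make this immediate. First, $\qdist{p,q}=\qdist{q,p}$, since $\qdist{-,-}$ is the absolute value of an antisymmetric expression, so every distance condition transfers unchanged. Second, the membership conditions transform by the rules: $x=y$ becomes $x=-y$; $x\in H^+y$ becomes $y\in -H^+x$; and $x\in -H^+y$ becomes $y\in H^+x$. For instance the corollary's $(\typ{p}=0,\typ{q}=1)$-cell reads ``$y\in -H^+x$, $\qdist{p,q}=1$'', which is precisely what the $(\typ{p}=1,\typ{q}=0)$-entry ``$x\in H^+y$, $\qdist{p,q}=1$'' of Proposition~\ref{prop:orto} becomes under $x\mapsto -y$, $y\mapsto x$. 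The main obstacle will be the purely clerical one of keeping the three $\pm H^+$-conditions straight through the substitution across all nine cells; once the three transformation rules above are recorded, the nine verifications are mechanical and the symmetry of $\qdist{-,-}$ settles the distance part at once.
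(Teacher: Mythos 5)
Your argument is correct and is essentially the paper's own proof: the paper likewise reduces the case $p<q$ to Proposition~\ref{prop:orto} via Serre duality together with the relation $\tau\bv_q^y=\bv_q^{-y}$, leaving the cell-by-cell bookkeeping implicit. The only (harmless) slip is the sign --- the correct identity is $\euler{\bv_p^x,\bv_q^y}=-\euler{\bv_q^{-y},\bv_p^x}$ rather than $+\euler{\bv_q^{-y},\bv_p^x}$ --- but since only the vanishing locus is used, this does not affect the conclusion.
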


\begin{proof}
  Observe that $\euler{\bv_p^x, \bv_q^y}=-\euler{\Phi
  \bv_q^y,\bv_p^x}=-\euler{\bv_q^{-y}, \bv_p^x}$ and therefore the result
  follows from the previous proposition.
\end{proof}

\begin{lemma}\label{lem:orto}
  We have
  $\euler{\bv_p^x,\bv_p^y}=0$ if and only if $x\neq - y$.
\end{lemma}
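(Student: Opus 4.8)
The plan is to reduce everything to the three base slopes and then to split according to the relation between $x$ and $y$. First I would observe that since both arguments have the same slope $p$ we have $\qdist{p,p}=0$, so the bilinearity computation carried out in the proof of Proposition~\ref{prop:orto} applies verbatim: expanding $\bv_p^x$ and $\bv_p^y$ via Definition~\eqref{eq:def-gen-vec} and using equations~\eqref{eq:formula2} of Lemma~\ref{lem:crucial}, all correction terms involving $\bh_0$ and $\bh_\infty$ cancel because $a(p)=a(q)$ and $b(p)=b(q)$ (equivalently, the term proportional to $\qdist{p,p}$ vanishes). This leaves
\[
\euler{\bv_p^x,\bv_p^y}=\euler{\bv_{\typ{p}}^x,\bv_{\typ{p}}^y},
\]
so it suffices to prove the statement for a base slope $t:=\typ{p}\in\{0,1,\infty\}$.

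Next I would dispose of the two ``diagonal'' cases. Since $\bv_t^x$ is a real positive Schur root (Proposition~\ref{prop:exceptional-to-vectors}), we have $\euler{\bv_t^x,\bv_t^x}=1$. Combining $\bv_t^{-x}=\bh_t-\bv_t^x$ with $\euler{\bv_t^x,\bh_t}=0$ (both from Lemma~\ref{lem:correct-slope}) gives
\[
\euler{\bv_t^x,\bv_t^{-x}}=\euler{\bv_t^x,\bh_t}-\euler{\bv_t^x,\bv_t^x}=-1 .
\]
Thus $\euler{\bv_t^x,\bv_t^y}\neq 0$ whenever $y\in\{x,-x\}$; in particular $x=-y$ forces a nonzero value, which already yields the ``only if'' direction.

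The remaining, and main, point is to show $\euler{\bv_t^x,\bv_t^y}=0$ for $y\notin\{x,-x\}$. The cleanest route is structural: by Proposition~\ref{prp:SchurRt} the eight vectors $\bv_t^x$, $x\in H$, are exactly the classes of the indecomposable rigid sheaves of slope $t$, i.e.\ the quasi-simples of the four rank-$2$ tubes making up $\cC_t\simeq\cC_\infty$ (Proposition~\ref{coh:prp0} and Theorem~\ref{coh:thm-tub}(a)). The identity $\bv_t^x+\bv_t^{-x}=\bh_t$ shows that the two classes indexed by $x$ and $-x$ sum to the isotropic root $\bh_t$, hence are precisely the two quasi-simples of one common tube; consequently $y\notin\{x,-x\}$ means $\bv_t^x$ and $\bv_t^y$ lie in \emph{distinct} tubes. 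Since distinct standard tubes are pairwise $\Hom$- and $\Ext^1$-orthogonal, both summands of $\euler{\bv_t^x,\bv_t^y}$ vanish and $\euler{\bv_t^x,\bv_t^y}=0$.

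Putting the three cases together yields $\euler{\bv_p^x,\bv_p^y}=0$ exactly when $y\neq x$ and $y\neq -x$; for two \emph{distinct} basic roots (the only relevant case, since $\bv_p^x=\bv_p^y$ forces $x=y$ by Lemma~\ref{lem:correct-slope}) this is precisely the condition $x\neq -y$, proving the lemma. I expect the off-pair case to be the only genuine obstacle: one must be certain that the pairing $x\leftrightarrow -x$ really is the within-tube pairing, and this is exactly what the relation $\bv_t^x+\bv_t^{-x}=\bh_t$ guarantees. If one prefers to avoid the categorical input, the same conclusion follows from a direct finite evaluation of the three Gram matrices $\bigl(\euler{\bv_t^x,\bv_t^y}\bigr)_{x,y\in H}$ at $t=0,1,\infty$ using the explicit homological form, entirely in the spirit of Lemma~\ref{lem:crucial}.
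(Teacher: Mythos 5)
Your proof is correct, and its second half takes a genuinely different route from the paper's. The first step — reducing to the base slopes via $\euler{\bv_p^x,\bv_p^y}=\euler{\bv_{\typ{p}}^x,\bv_{\typ{p}}^y}$ — is exactly the paper's argument ("the other summands cancel each other"); the cancellation works because $\deg\bv_{\typ{p}}^x$ and $\rk\bv_{\typ{p}}^x$ do not depend on $x$ and the remaining $\bh_0,\bh_\infty$ terms vanish by isotropy and $\euler{\bh_0,\bh_\infty}=-\euler{\bh_\infty,\bh_0}$. For the base slopes, however, the paper just says "direct calculations", i.e.\ the finite check of the three Gram matrices that you relegate to a remark at the end. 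Your main route instead identifies the eight $\bv_t^x$ with the quasi-simples of the four rank-$2$ tubes of slope $t$, uses $\bv_t^x+\bv_t^{-x}=\bh_t$ together with the uniqueness of an indecomposable with a given real class (Theorem~\ref{coh:thm-tub}(d)) to recognize $x\leftrightarrow -x$ as the $\tau$-pairing within a tube, and then invokes orthogonality of distinct tubes plus the two explicit values $\euler{\bv_t^x,\bv_t^x}=1$ and $\euler{\bv_t^x,\bv_t^{-x}}=-1$. This is sound (no circularity: Propositions~\ref{prop:exceptional-to-vectors} and~\ref{prp:SchurRt} precede the lemma) and it buys a conceptual explanation of the pattern, at the cost of importing the categorical input of Section~2 where the paper's check is self-contained linear algebra. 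You are also right to restrict to distinct roots at the end: taken literally the lemma fails for $x=y$, where the form equals $1$ although $x\neq -y$; the intended reading, and the only one used later in Theorem~\ref{thm:comp}, is the compatibility criterion for basic roots of equal slope, where the diagonal case is self-compatibility.
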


\begin{proof}
  This follows by direct calculations for $p=0,1,\infty$ and in the
  general case using \eqref{eq:def-gen-vec} from
  $\euler{\bv_p^x,\bv_p^y}=\euler{\bv_{\typ{p}}^x,\bv_{\typ{p}}^y}$ since the 
  other summands cancel each other.
\end{proof}

\begin{proposition}
  Let $p,q\in \QQi$ and $x,y\in H$. If 
  $\euler{\bv_p^x, \bv_q^y}=0=\euler{\bv_q^y, \bv_p^x}$ then
  $p=q$ and $x\neq \pm y$.
\end{proposition}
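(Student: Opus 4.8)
The plan is to reduce the two hypotheses to a pair of one-sided orthogonality conditions and then read off the conclusion from the tables already established. First I would use the identity $\euler{\be,\bff}=-\euler{\Phi\bff,\be}$ together with $\Phi\bv_r^z=\bv_r^{-z}$ (exactly as in the proof of Corollary~\ref{cor:orto}) to rewrite the second hypothesis: taking $\be=\bv_q^y$ and $\bff=\bv_p^x$ gives
$$\euler{\bv_q^y,\bv_p^x}=-\euler{\bv_p^{-x},\bv_q^y}.$$
Thus the assumption $\euler{\bv_p^x,\bv_q^y}=0=\euler{\bv_q^y,\bv_p^x}$ is equivalent to the pair
$$\euler{\bv_p^x,\bv_q^y}=0\qquad\text{and}\qquad\euler{\bv_p^{-x},\bv_q^y}=0,$$
both of which now carry $\bv_q^y$ as their second argument.

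Next I would rule out $p\neq q$. Assuming $p>q$ (the case $p<q$ is identical, with Corollary~\ref{cor:orto} replacing Proposition~\ref{prop:orto}), Proposition~\ref{prop:orto} forces each of the two vanishing conditions to put its pair of $H$-elements into the relation prescribed by the corresponding cell of table~\eqref{eq:cond-comp}; the distance requirement $\qdist{p,q}\in\{1,2\}$ is shared by both and plays no role. In every cell the relation between the first index $\xi$ and $y$ has one of the shapes $\xi=y$, $y\in\pm H^+\xi$, or $\xi\in\pm H^+y$. The decisive point is that replacing $\xi=x$ by $\xi=-x$ always negates such a relation: since $-1$ is central and $H=H^+\sqcup(-H^+)$, multiplication by $-1$ interchanges $H^+\xi$ with $-H^+\xi$, so the condition imposed by $\euler{\bv_p^x,\bv_q^y}=0$ and that imposed by $\euler{\bv_p^{-x},\bv_q^y}=0$ are mutually exclusive (for instance $x=y$ forbids $-x=y$, as no element of $H$ equals its own negative, and $y\in H^+x$ forbids $y\in -H^+x$). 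Hence the two reduced conditions cannot hold together, a contradiction, so $p=q$.

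Finally, with $p=q$ I would apply the forward implication of Lemma~\ref{lem:orto} to each reduced condition: $\euler{\bv_p^x,\bv_p^y}=0$ gives $x\neq -y$, while $\euler{\bv_p^{-x},\bv_p^y}=0$ gives $-x\neq -y$, that is $x\neq y$. Together these yield $x\neq\pm y$, completing the proof.

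The step I expect to be the main obstacle is the uniform ``negation'' across all nine type-combinations: one must be sure that, whatever $\typ{p}$ and $\typ{q}$ are, the cell condition for $\bv_p^x$ and the cell condition for $\bv_p^{-x}$ are genuinely incompatible. What keeps this short, rather than a nine-fold case check, is the single structural fact that $H=H^+\sqcup(-H^+)$ with $-1$ acting freely, which simultaneously rules out $x=-x$ and forces $H^+\xi$ and $-H^+\xi$ to be disjoint.
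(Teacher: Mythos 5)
Your proof is correct and follows essentially the same route as the paper: Proposition~\ref{prop:orto} and Corollary~\ref{cor:orto} force $p=q$, and Lemma~\ref{lem:orto} then yields $x\neq\pm y$. Your one variation --- rewriting $\euler{\bv_q^y,\bv_p^x}$ as $-\euler{\bv_p^{-x},\bv_q^y}$ so that a single table suffices and the incompatibility of the cell conditions for $x$ and $-x$ follows uniformly from $H=H^+\sqcup(-H^+)$ --- is a clean repackaging of the same comparison, with the small bonus that $x\neq y$ comes directly from the forward implication of the lemma rather than from observing $\euler{\bv_p^x,\bv_p^x}=1$ separately.
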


\begin{proof}
  It follows from Proposition \ref{prop:orto} and Corollary
  \ref{cor:orto} that $p=q$. Then the result follows from the previous lemma.
\end{proof}

\section{Triangulations of the sphere with 4 punctures}

\subsection{Definition of triangulated surfaces}
\label{subsec:def-surf-marked}
In \cite{FST}, Fomin, Shapiro and Thurston established a connection
between cluster algebras and triangulated surfaces. We will use
this approach for tubular cluster algebras of type $(2,2,2,2)$.

Let $\surf$ be an oriented 2-dimensional Riemann surface (possibly
with boundary) and $\marked$ a finite set of points in the closure of
$\surf$.  An \emph{arc} in $(\surf,\marked)$ is (the homotopy class in
$\surf\setminus\marked$ of) a curve without self-intersections
connecting two marked points which is not contractible in
$\surf\setminus\marked$ nor deformable into the boundary of
$\surf$. Also a curve and its inverse will be identified. 
Two arcs are \emph{compatible} if they contain curves which
do not intersect in $\surf\setminus\marked$. Each arc is compatible
with itself. A maximal collection of pairwise compatible curves is
called an \emph{ideal triangulation} of $(\surf,\marked)$.

Some of these arcs (namely those which are not loops enclosing a
  single marked point) are then enhanced to \emph{tagged arcs}, that
is, to each of its two endpoints one of the two labels ``plain'' or
``notched'', is attached, see \cite{FST} for details. Tagged arcs
should be seen as a generalization of ``plain'' arcs. If $\beta$
is a tagged arc we denote by $\beta^\circ$ the ``underlying''
untagged arc obtained from $\beta$ by removing the labels on the endpoints.  
Recall from \cite[Definition 7.4]{FST} that if
$\beta^\circ\neq\gamma^\circ$ then $\beta$ and $\gamma$ are
compatible if and only if the tags on common endpoints are the same
and $\beta^\circ$ and $\gamma^\circ$ are compatible; and if
$\beta^\circ=\gamma^\circ$ then then $\beta$ and $\gamma$ are
compatible if and only if they share the same tag on at least one
endpoint.
A \emph{tagged triangulation} is, by definition, a maximal
collection of pairwise compatible tagged arcs. The advantage of the
tags is that for each tagged triangulation $T$ and each arc $\gamma$
there exists a unique tagged arc $\gamma'\neq \gamma$ such that
$\mu_\gamma(T)=T\setminus\{\gamma\}\cup\{\gamma'\}$ is again a tagged 
triangulation, called the \emph{flip of $T$ along $\gamma$}.

To each tagged triangulation $T$ of a marked surface $(\surf,\marked)$
an integer square matrix $B(T)$, indexed by the arcs of $T$, is
assigned in such a way that the matrix mutation $\mu_k$, as defined by
Fomin-Zelevinsky in \cite{FZ1}, corresponds to the \emph{flip} of
tagged triangulation, that is, $B(\mu_\gamma(T))=\mu_\gamma(B(T))$.
By \cite{FST}, the set of matrices $\mSet(\surf,\marked)$ of matrices
$B(T)$ as $T$ varies through the tagged triangulation of
$(\surf,\marked)$, is a single mutation class of matrices.

Denote by $\tAC(\surf,\marked)$ the \emph{tagged arc complex}, that
is, the clique complex on the set of tagged arcs given by the
compatibility relation. The \emph{tagged exchange graph}
$\tEG(\surf,\marked)$ is by definition the dual
graph of $\tAC(\surf,\marked)$. We quote part of the main result,
Theorem 7.11, of \cite{FST}.

\begin{theorem}[Fomin, Shapiro, Thurston]
  If $\mathcal{A}$ is a cluster algebra whose set of exchange matrices
  is $\mSet(\surf,\marked)$ for some marked surface $(\surf,\marked)$
  then the cluster complex of $\mathcal{A}$ is isomorphic to
  $\tAC(\surf,\marked)$ and the exchange graph of $\mathcal{A}$ is
  isomorphic to $\tEG(\surf,\marked)$. 
\end{theorem}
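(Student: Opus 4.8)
The plan is to build an explicit comparison between the combinatorics of tagged triangulations and the seed pattern of $\mathcal{A}$, and to deduce the exchange-graph isomorphism from the cluster-complex isomorphism by passing to dual graphs. Since the cluster complex and the exchange graph of $\mathcal{A}$ are governed by its seed pattern, and by hypothesis the exchange matrices of $\mathcal{A}$ form the mutation class $\mSet(\surf,\marked)$, it suffices to match seeds with tagged triangulations. First I would fix once and for all a reference tagged triangulation $T_0$ of $(\surf,\marked)$. As $B(T_0)\in\mSet(\surf,\marked)$ is an exchange matrix of $\mathcal{A}$, there is a seed of $\mathcal{A}$ with exchange matrix $B(T_0)$; I identify its cluster variables with the tagged arcs of $T_0$ via the common indexing set. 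The flip--mutation compatibility $B(\mu_\gamma T)=\mu_\gamma B(T)$ then propagates this identification: along any finite sequence of flips from $T_0$ to a triangulation $T$ there is a matching sequence of mutations producing a seed $\Sigma(T)$, and each arc created by a flip is sent to the cluster variable created by the corresponding mutation. This assigns a cluster variable to each tagged arc and a cluster to each tagged triangulation.

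The first substantive point is that this assignment is a \emph{local isomorphism}. On the surface side each tagged triangulation has exactly $n$ arcs (the rank), and by the uniqueness of flips recalled above each $(n-1)$-element compatible set of tagged arcs lies in exactly two tagged triangulations; thus $\tAC(\surf,\marked)$ is a pure, connected pseudomanifold, connectedness being the topological statement that any two tagged triangulations of $(\surf,\marked)$ are joined by flips. On the cluster side the cluster complex of $\mathcal{A}$ has the same local structure: each cluster has exactly $n$ neighbours and every almost-complete cluster has precisely two completions. Because flip corresponds to mutation, the comparison map carries the two flips at an arc to the two mutations at the corresponding variable, so it is a bijection on the stars of matching facets. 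A local isomorphism of this kind between connected pseudomanifolds is a covering map of the associated dual graphs.

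It remains to show that this covering is trivial, equivalently that the comparison map is injective: distinct tagged arcs must give distinct cluster variables and distinct tagged triangulations distinct clusters. Here one shows that every cycle in the tagged flip graph is generated by the elementary $2$-cells --- the squares coming from flips (mutations) at arcs with no arrow between the corresponding indices, and the pentagons coming from flip relations inside a subsurface that is a pentagon or a once-punctured triangle --- and that these same relations hold in the exchange graph of $\mathcal{A}$. Consequently closed flip-paths lift to closed mutation-paths, the seed $\Sigma(T)$ depends only on $T$, and the map is well defined and injective. Combined with surjectivity, which follows from connectedness of both sides, this yields bijections on vertices and on facets, hence a simplicial isomorphism between the cluster complex of $\mathcal{A}$ and $\tAC(\surf,\marked)$; dualizing gives the exchange-graph isomorphism with $\tEG(\surf,\marked)$.

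I expect the injectivity step to be the main obstacle. The local and flip combinatorics alone only produce a covering, and ruling out nontrivial coincidences --- two different tagged arcs representing the same cluster variable, or two triangulations giving the same seed --- requires genuinely finer input. One must certify that the surface really parametrizes the cluster variables, for instance through explicit coordinates such as the shear coordinates of laminations, or through the structural results of Fomin--Zelevinsky controlling when mutation returns a previously seen seed. Verifying that the only relations among seeds are the expected square and pentagon relations, and that no surface-level data is collapsed, is the technical heart of \cite{FST}.
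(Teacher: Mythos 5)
First, note that the paper does not prove this statement at all: it is quoted verbatim as part of Theorem~7.11 of \cite{FST} (``We quote part of the main result\ldots''), so there is no internal proof to compare yours against. Judged on its own terms, your outline reproduces the standard strategy behind the result: anchor a seed of $\mathcal{A}$ at a reference tagged triangulation $T_0$, propagate along flips via $B(\mu_\gamma(T))=\mu_\gamma(B(T))$, and use the facts that both complexes are pure of the same dimension, that every almost-complete compatible collection (respectively almost-complete cluster) has exactly two completions, and that both flip graphs are connected, to obtain a covering-type comparison map. That much is sound.

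The genuine gap is the one you flag yourself but do not close: injectivity. Your sentence ``closed flip-paths lift to closed mutation-paths, the seed $\Sigma(T)$ depends only on $T$, and the map is well defined and injective'' is a non sequitur --- well-definedness of $T\mapsto\Sigma(T)$ does not rule out two distinct tagged arcs yielding the same cluster variable, nor two distinct triangulations yielding the same unlabelled cluster. Moreover the input you invoke for well-definedness, namely that every cycle in the tagged flip graph is generated by square and pentagon cells, is itself a nontrivial topological assertion about $\tAC(\surf,\marked)$ that you state without proof. Closing the injectivity step requires a genuinely finer invariant separating arcs --- in \cite{FST} and its sequel this is done via denominators/shear coordinates of laminations, or via the Fomin--Zelevinsky structural results on when mutation revisits a seed --- and none of that machinery is supplied here. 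As written, your argument establishes at most a well-defined surjection from tagged arcs onto cluster variables, not the claimed isomorphisms of the cluster complex with $\tAC(\surf,\marked)$ and of the exchange graph with $\tEG(\surf,\marked)$.
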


As a consequence, in the situation of the previous result, the cluster
variables of $\mathcal{A}$ are in bijection with the tagged arcs of
$(\surf,\marked)$. 

\begin{example}
  Let $(\surf,\marked)$ be the 2-sphere with 4 punctures called
  $L,U,D$ and $O$. We visualize the situation as a disk with 3 punctures
  $L,U,D$, and represent the fourth puncture $O$ by its boundary. Let
  $T$ be the tagged triangulation (all tags are plain) showed in the
  picture below. The matrix $B(T)$ is shown on the right hand
  side. Notice that the associated quiver is exactly the one given in
  \eqref{eq:quiver1}.
  
  \begin{center}
    \includegraphics[scale=1,viewport=140 585 470 710,clip]{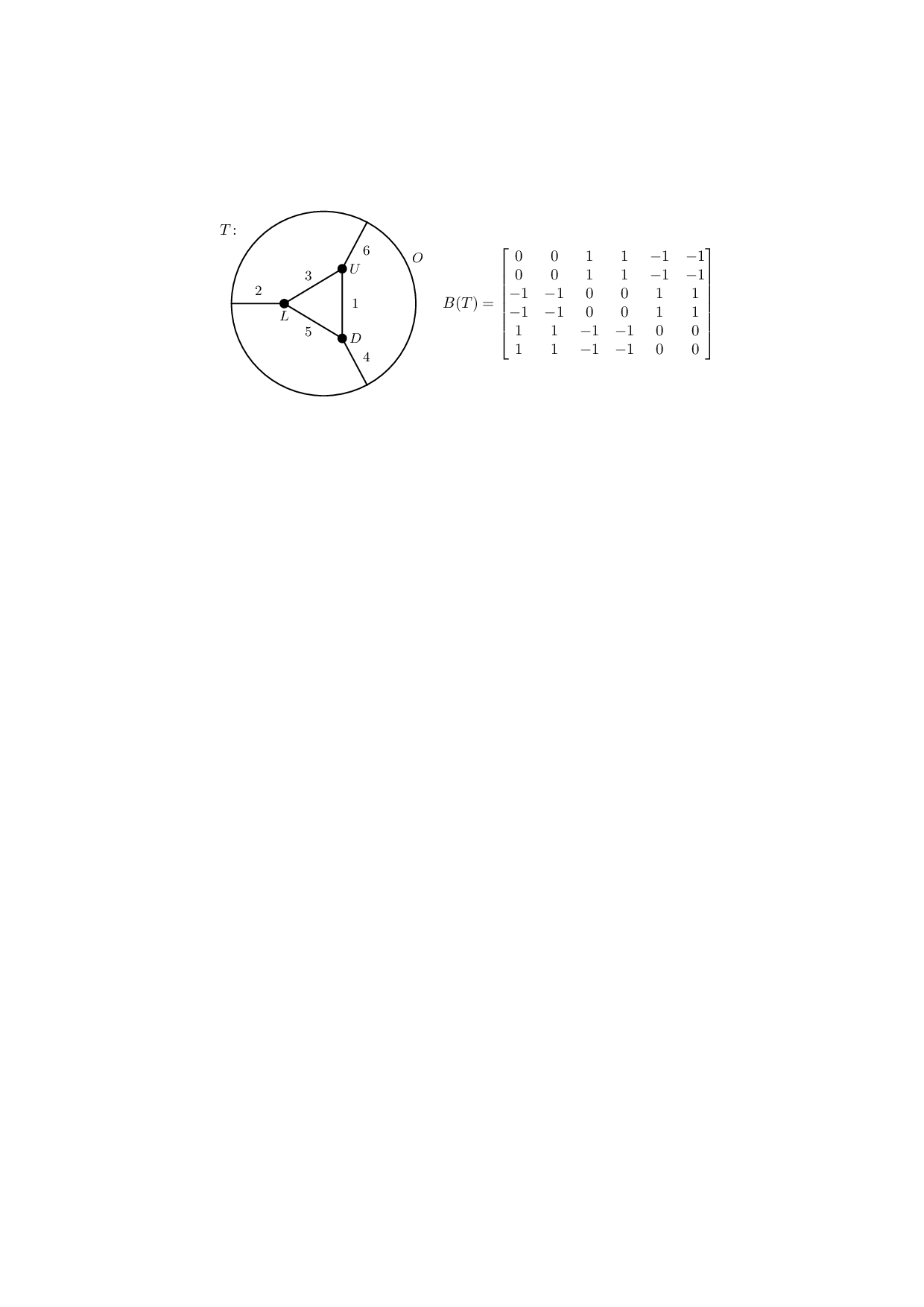}
  \end{center}
\end{example}

\subsection{Special untagged arcs}

We now want to associate with each rational number $p$ two untagged
arcs $\alpha_{q}^\Inner$ and $\alpha_{q}^\Outer$, namely an \emph{inner} arc
$\alpha_{q}^{\Inner}$, connecting two of the three vertices $L,U,D$, and
an \emph{outer} arc $\alpha_{q}^{\Outer}$, connecting one of the
vertices $L,U,D$ with $O$.
We start by doing so for
${q}=-1,0$ and $\infty$.

\begin{definition}
\label{def:low-compl-arcs}
We define $\alpha_q^\Inner$ and $\alpha_q^\Outer$ for $q=-1,0,\infty$
as shown in Figure~\ref{fig:sepc-arcs}.
\end{definition}

\begin{figure}[!h]
  \begin{center}
    \includegraphics[scale=1,viewport=130 598 257 717,clip]{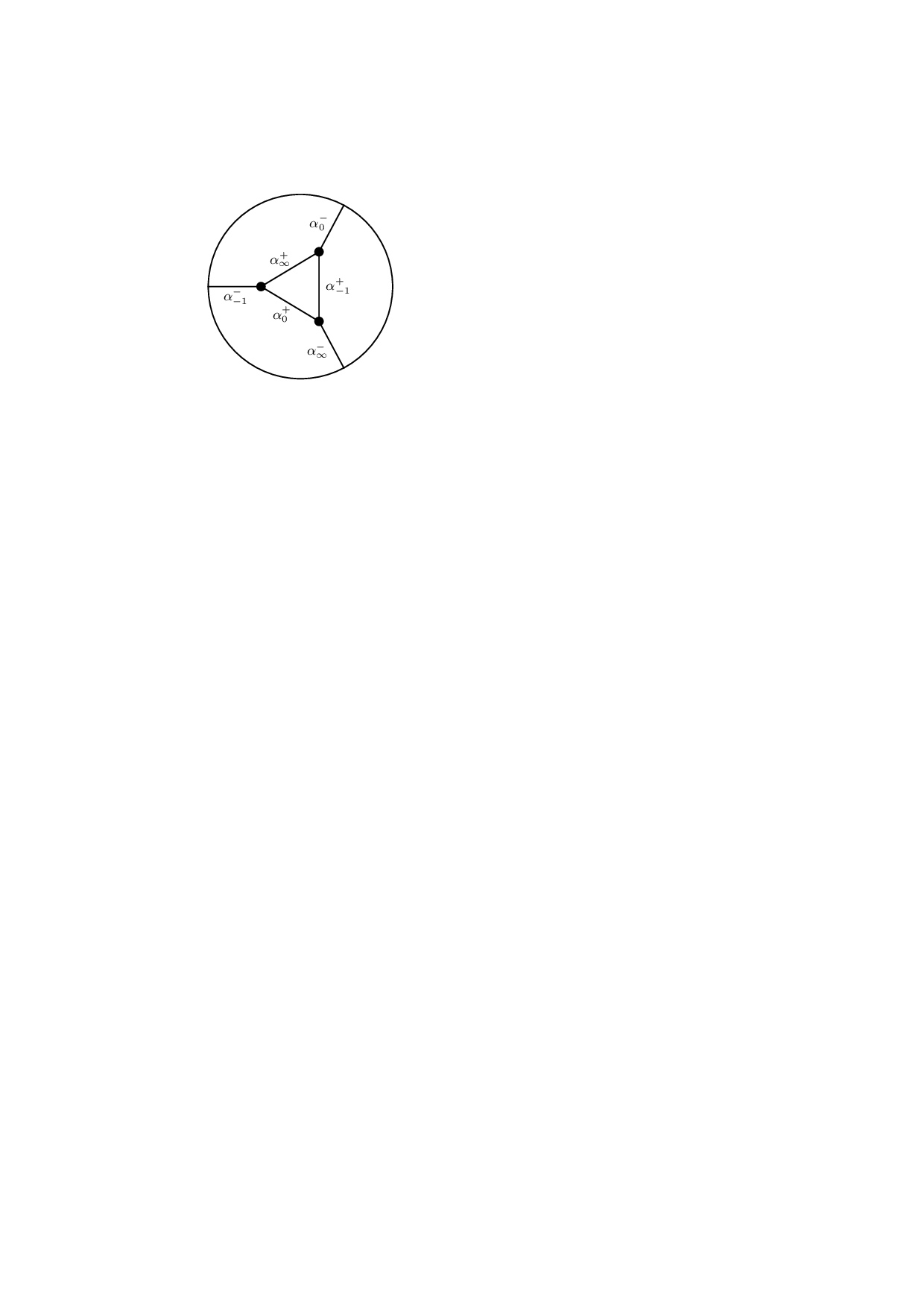}
  \end{center}
\caption{Untagged arcs of low complexity}
\label{fig:sepc-arcs}
\end{figure}

To simplify notations we redefine here the \emph{type} of $q\in\QQi$ as follows:
$$
\typc{q}=
\begin{cases}
  0,&\text{if $a(q)\equiv 0$, $b(q)\equiv 1 \mod 2$},\\
  -1,&\text{if $a(q)\equiv 1$, $b(q)\equiv 1 \mod 2$},\\
  \infty,&\text{if $a(q)\equiv 1$, $b(q)\equiv 0 \mod 2$}.
\end{cases}
$$Notice that $\typc{q}=-\typ{q}$.  

We start by describing first the arc $\alpha_p^{\Inner}$ in the case
where $p=\frac{r}{s}$ with $r,s>0$ are coprime. In the first step we
draw $\innerfunc{r}$ different semicircles with center $U$ and $\innerfunc{s}$
different semicircles with center $D$, all of them open to the left and
mutually non-intersecting. We also draw $\innerfunckl{r+s}$ different
semicircles with center $L$ open to the right. 

\begin{figure}[!h]
  \begin{center}
    \includegraphics[scale=1,viewport=130 557 475 717,clip]{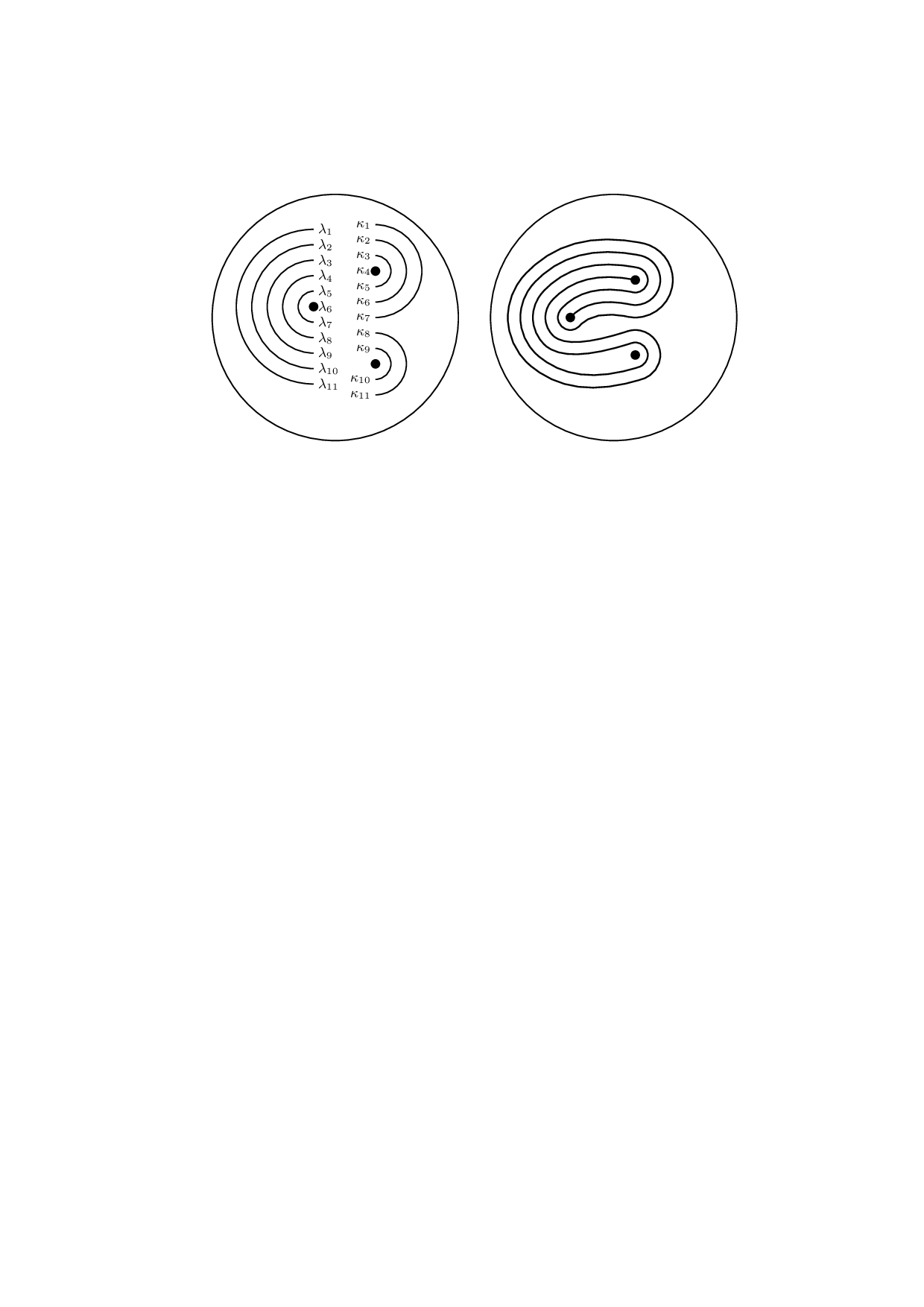}
  \end{center}
\caption{Construction of $\alpha_p^{\Inner}$ for $p=\frac{7}{4}$}
\label{fig:def-arc}
\end{figure}

The endpoints of the semicircles around $U$ and $D$ together with $U$
(resp. $D$) in case $r$ (resp. $s$) is odd, define $r+s$ points
$\kappa_1,\ldots,\kappa_{r+s}$, enumerated consecutively from the top
to the bottom. Similarly, the endpoints of the semicircles around $L$
together with $L$ in case $r+s$ is odd, define $r+s$ endpoints
$\lambda_1,\ldots,\lambda_{r+s}$, enumerated consecutively from the
top to the bottom. In the second step, for each $i$, the point
$\kappa_i$ is joined with $\lambda_i$ by a segment. 
We illustrate this construction in Figure~\ref{fig:def-arc}.

We will show in section \ref{subsec:unfolding} below that each such arc is 
connected.

To define the outer arcs, it will be convenient to use the abbreviation $\outerfunc{n}=\innerfunc{(n-1)}$ for positive $n$, that is,
  $$
  \outerfunc{n}=\begin{cases}
  0,&\text{ if $n=0$,}\\
  \tfrac{n-2}{2},&\text{ if $n>0$ is even,}\\
  \tfrac{n-1}{2},&\text{ if $n$ is odd.}
  \end{cases}
  $$  

Notice that for each arc connecting two different marked points there
exists a unique compatible non-tagged arc connecting the other two vertices. In
this way $\alpha_p^{\Inner}$ defines $\alpha_p^{\Outer}$ in case
$p=\frac{r}{s}$ with $r,s>0$. However, we shall give a more explicit
construction, again in two steps. In the first step we draw
$\outerfunc{r}$ (resp. $\outerfunc{s}$) semicircles around $U$
(resp. $D$). Their endpoints define (possibly together with $U$, resp
$D$) $r-1$ (resp. $s-1$) points denoted by
$\kappa_1,\ldots,\kappa_{r-1}$
(resp. $\kappa_{r+1},\ldots,\kappa_{r+s-1}$). An additional point
$\kappa_r$ is introduced vertically between $\kappa_{r-1}$ and
$\kappa_{r+1}$. Also $\outerfunckl{r+s}$ semicircles are drawn around $L$ with
endpoints $\lam_1,\ldots,\lam_{r+s-1}$. Then $\lam_i$ is joined with
$\kappa_i$ for all $i$.

Additionally $\kappa_r$ is joined with $O$ by a horizontal segment with left 
endpoint $\kappa_r$. The resulting picture is the arc $\alpha_p^{\Outer}$. 
We illustrate this construction in Figure~\ref{fig:outer-arc}.

\begin{figure}[!h]
  \begin{center}
    \includegraphics[scale=1,viewport=130 557 475 717,clip]{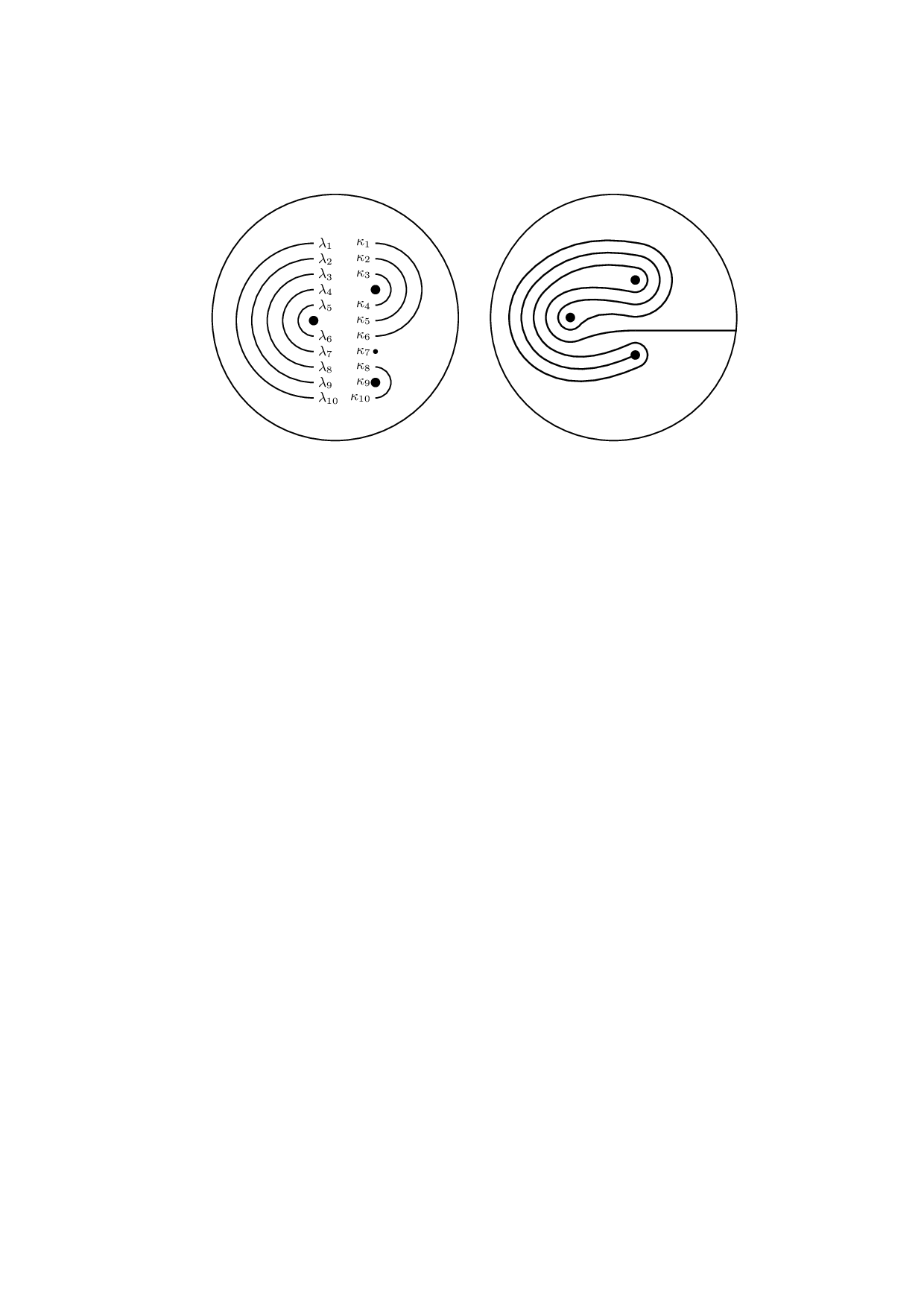}
  \end{center}
\caption{Construction of $\alpha_p^{\Outer}$ for $p=\frac{7}{4}$}
\label{fig:outer-arc}
\end{figure}
 
To define $\alpha_p^{\Inner}$ and $\alpha_p^{\Outer}$ for negative $p$ we 
generalize the above construction. In general, to construct $\alpha_p^{\Inner}$
let $u=\innerfunc{\abs{r}}$, $d=\innerfunc{\abs{s}}$ and 
$l=\innerfunc{\abs{r+s}}$. 
Take the two smallest values among $u,d,l$ and draw as many semicircles around 
the two corresponding marked points. These are then joined around the third 
marked point. Similarly the construction of $\alpha_p^{\Outer}$ is generalized, see Figure~\ref{fig:other-inner}.

\begin{figure}[!h]
  \begin{center}
    \includegraphics[scale=1,viewport=130 638 475 717,clip]{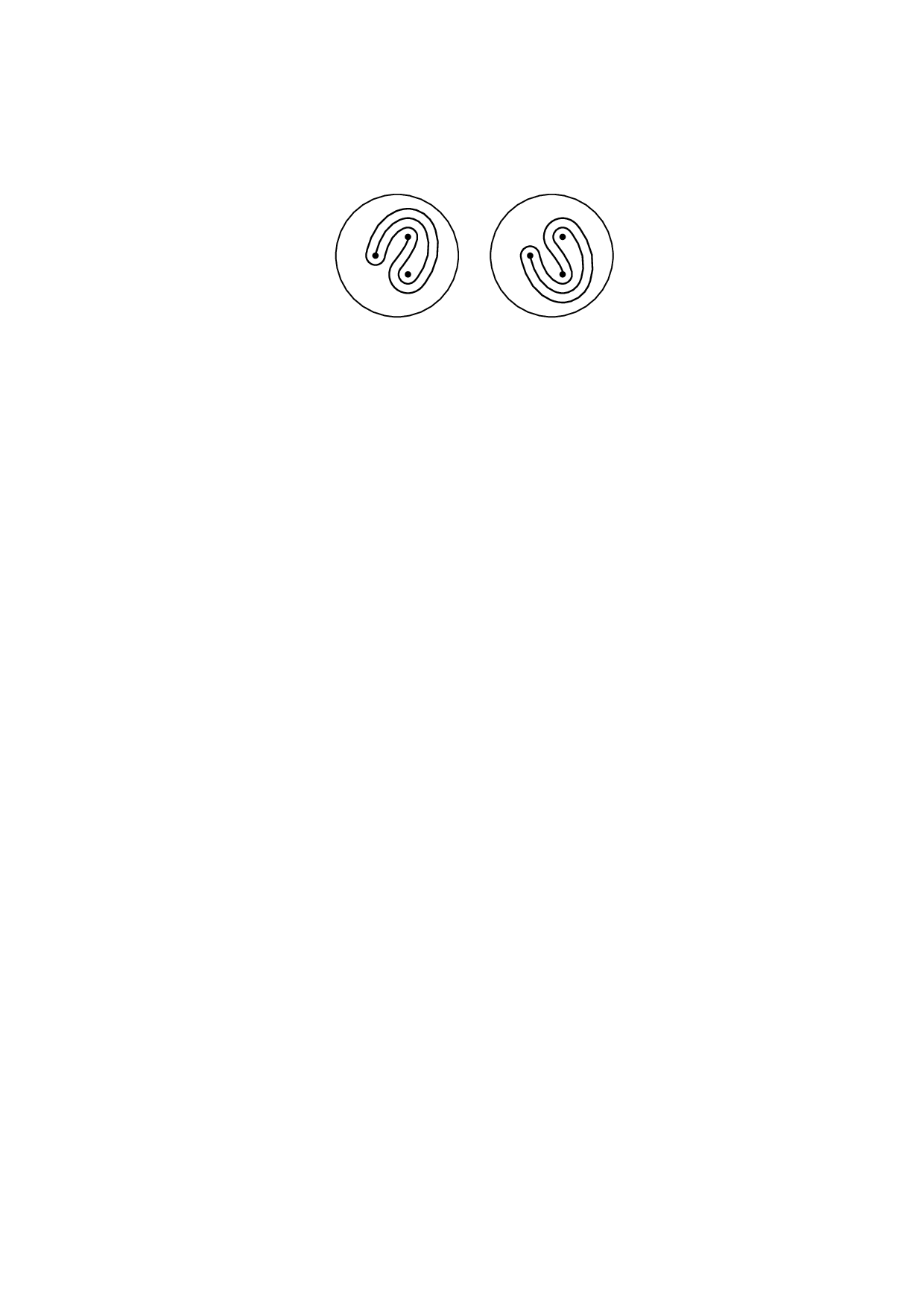}
  \end{center}
\caption{$\alpha_p^{\Inner}$ for $p=\frac{-7}{4}$ (left) and $p=\frac{-4}{7}$ (right)}
\label{fig:other-inner}
\end{figure}

Figure \ref{fig:bigpicture}  
shows how all
inner arcs can be displayed nicely in the plane.

\begin{figure}[!h]
\begin{picture}(300,420)
\put(-60,0){
\includegraphics[scale=.93,viewport=70 300 540 740,clip]{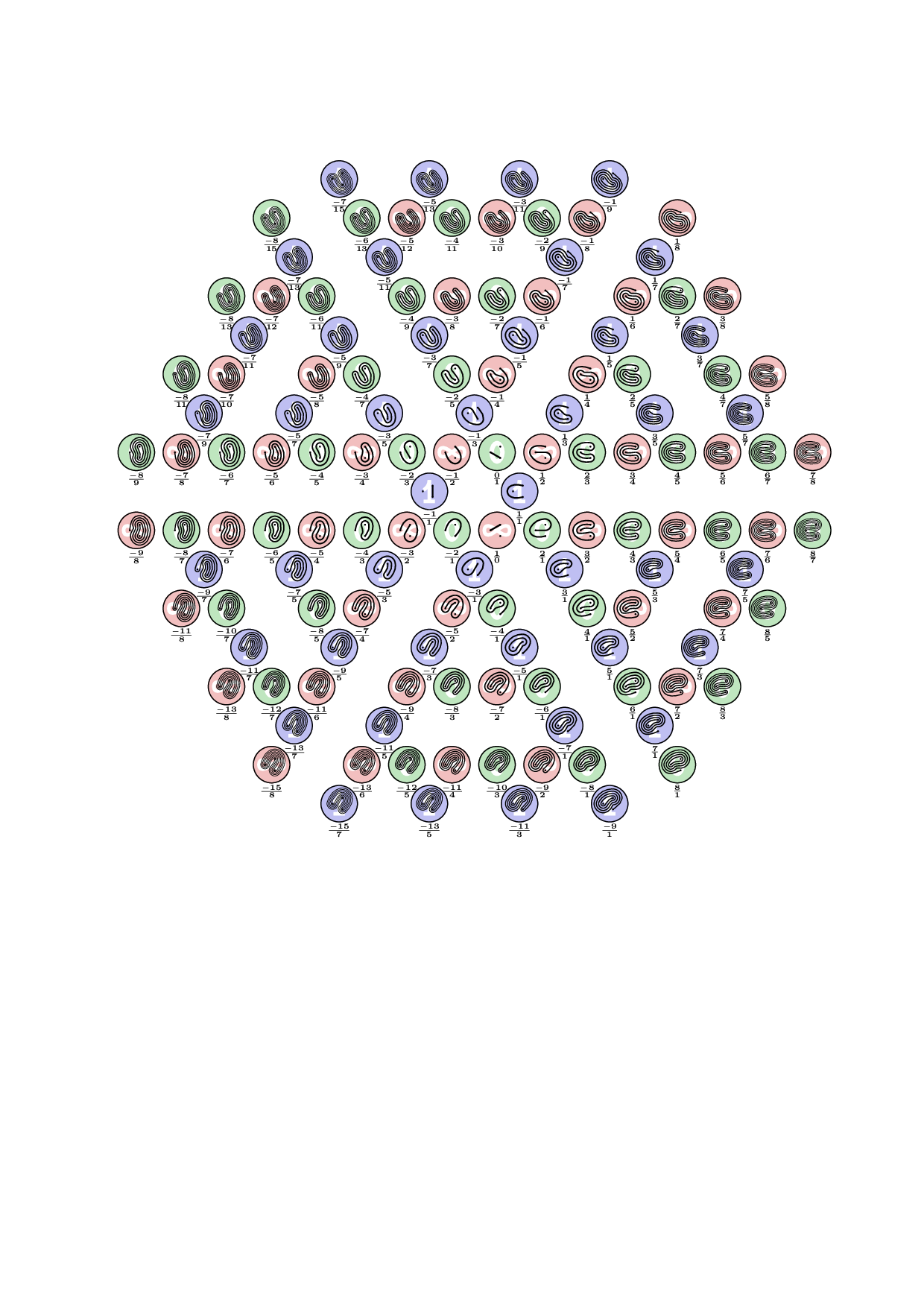}
}
\end{picture}
\caption{Display of inner arcs $\alpha_p^{\Inner}$ for $p\in\QQi$}
\label{fig:bigpicture}
\end{figure}

\subsection{Unfolding of arcs}
\label{subsec:unfolding}

We now describe three procedures $\unfold{D}$, $\unfold{U}$ and
$\unfold{L}$ to modify an arc. They will be called \emph{unfolding
  maps}.  For this, let $D'$ (resp. $U'$, $L'$) be the reflection of
$D$ (resp. $U$, $L$) on the line $LU$ (resp. $LD$, $UD$).

\begin{figure}[!h] \label{fig:unfoldL}
  \begin{center}
    \includegraphics[scale=1,viewport=130 615 475 717,clip]{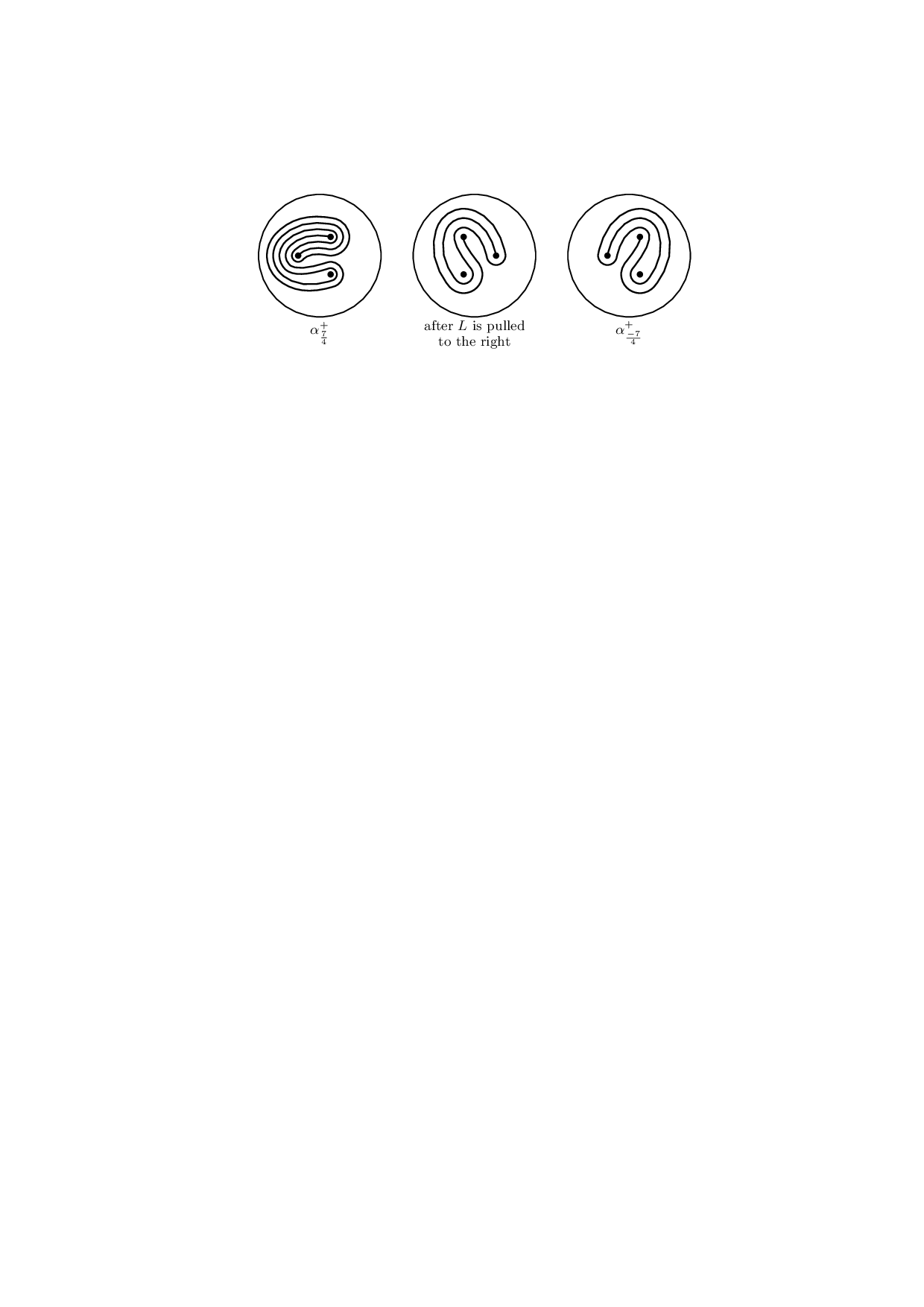}
  \end{center}
\caption{Effect of $\unfold{L}$ on the arc $\alpha_{\frac{7}{4}}^\Inner$.}
\end{figure}

The procedure $\unfold{L}$ consists in pulling the point $L$ 
together with the semicircles
surrounding it between the other two points and place it at the position
$L'$ and then reflect the whole situation on the line $UD$ such that 
$L'$ is reflected back to the position $L$. See Figure~\ref{fig:unfoldL}
for an example.

The other two procedures $\mu_U$ and $\mu_D$ are defined similarly.

\begin{definition}
For each $\frac{r}{s}\in \QQi$ we define its \emph{complexity} by
$$
\complexity{\tfrac{r}{s}}=\abs{r}+\abs{s}+\abs{r+s}.
$$ 
We shall also say that the arc $\alpha_p^\varepsilon$ \emph{has
  complexity} $\complexity{p}$. The lowest complexity is $2$ and occurs if and only if $\frac{r}{s}\in\{-1,0,\infty\}$. 
\end{definition}
\pagebreak[3]

\begin{lemma}
  Let $\alpha_p^{\varepsilon}$ be an arc. 
  \begin{itemize}
  \item[{\rm (a)}] We have
    $\unfold{L}(\alpha_p^\varepsilon)=\alpha_{p'}^\varepsilon$ where
    $p'=-p$. Suppose that $0\leq p$. Then, 
    $\complexity{p}\leq 2$ implies $\complexity{p'}\leq 2$ and if
    $\complexity{p}>2$ then $\complexity{p'}<\complexity{p}$.
  \item[{\rm (b)}] We have
    $\unfold{D}(\alpha_p^\varepsilon)=\alpha_{p'}^\varepsilon$ where
    $p'=\frac{-p}{2p+1}$. Suppose that $-1\leq p\leq 0$. Then, 
    $\complexity{p}\leq 2$ implies $\complexity{p'}\leq 2$ and if
    $\complexity{p}>2$ then $\complexity{p'}<\complexity{p}$.
  \item[{\rm (c)}] We have
    $\unfold{U}(\alpha_p^\varepsilon)=\alpha_{p'}^\varepsilon$ where
    $p'=2-p$. Suppose that $p\leq -2$. Then, 
    $\complexity{p}\leq 2$ implies $\complexity{p'}\leq 2$ and if
    $\complexity{p}>2$ then $\complexity{p'}<\complexity{p}$.
  \end{itemize}
  Hence, for each arc $\alpha_p^\varepsilon$ with $\complexity{p}>2$
  there exists an unfolding map $\unfold{}$ such that
  $\unfold{}(\alpha_p^\varepsilon)=\alpha_{p'}^\varepsilon$ with
  $\complexity{p'}<\complexity{p}$.
\end{lemma}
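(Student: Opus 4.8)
The statement separates into two essentially independent layers, and the plan is to treat them in turn. The first, topological, layer is the assertion that each unfolding map sends $\alpha_p^\varepsilon$ to an arc of the form $\alpha_{p'}^\varepsilon$ with $p'$ given by the indicated transformation; the second, arithmetic, layer is the comparison of complexities together with the concluding covering claim. I would establish the topological layer first, since it carries the genuine content, and then reduce every complexity inequality to one elementary identity.

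For the topological layer I would work directly with the semicircle data of the construction in Section~\ref{subsec:unfolding}. Writing $p=\frac{r}{s}$, the arc $\alpha_p^\varepsilon$ is governed by the counts $u=\innerfunc{\abs{r}}$, $d=\innerfunc{\abs{s}}$, $l=\innerfunc{\abs{r+s}}$ of semicircles around $U,D,L$ (with the adjustment by $\outerfunc{\ }$ in the outer case $\varepsilon=\Outer$). The map $\unfold{L}$ pulls $L$ and its $l$ semicircles across the segment $UD$ and reflects, reattaching the connecting segments so that the new counts are those of $p'=-p$, the label $\varepsilon$ being preserved; the analogous bookkeeping for $\unfold{D}$ gives $p'=\frac{-p}{2p+1}$, and likewise for $\unfold{U}$. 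The delicate points are the parity case-distinctions hidden in $\innerfunc{\ }$ and $\outerfunc{\ }$ and the verification that the resulting curve is again connected (promised in Section~\ref{subsec:unfolding}); this is where I expect the principal technical effort, so I would isolate it as a separate lemma identifying each $\unfold{\bullet}$ with its linear-fractional action on $p$, checking en route that each map is an involution, as befits a reflection.

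For the arithmetic layer the key observation is the identity
\[
\complexity{\tfrac{r}{s}}=\abs{r}+\abs{s}+\abs{r+s}=2\max\!\big(\abs{r},\abs{s},\abs{r+s}\big),
\]
valid because $r+s+\big(-(r+s)\big)=0$, so among $\abs{r},\abs{s},\abs{r+s}$ the largest equals the sum of the other two. With this, (a) and (b) are immediate. For $0\le p$ one has $\complexity{p}=2(r+s)$ while $\complexity{-p}=2\max(r,s)$, so $\complexity{-p}\le\complexity{p}$ with equality exactly when $r=0$ or $s=0$, i.e.\ $p\in\{0,\infty\}$; since $\complexity{p}\le 2$ forces $p\in\{0,\infty\}$, both clauses of (a) follow. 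For $-1\le p\le 0$ one gets $\complexity{p}=2\abs{s}$ and, from $(r',s')=(-r,2r+s)$, that $\complexity{p'}=2\max(\abs{r},\abs{2r+s},\abs{r+s})\le 2\abs{s}$ with equality exactly at $p\in\{-1,0\}$, the only slopes of complexity $\le 2$ in this range; this is (b).

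Part (c) is the point where the statement as worded must be scrutinized, and I flag it as the main obstacle. For $p\le-2$, writing $p=\frac{r}{s}$ with $s\ge 1$ and $r\le-2s$, the prescribed $p'=2-p=\frac{2s-r}{s}$ gives
\[
\complexity{p'}=(2s-r)+s+(3s-r)=6s-2r,\qquad \complexity{p}=-2r,
\]
whence $\complexity{p'}-\complexity{p}=6s>0$: the complexity \emph{increases}, and moreover the stated domains of (a), (b), (c) leave the interval $(-2,-1)$ uncovered, so the concluding covering assertion cannot be deduced. The computation itself indicates the remedy: the three reflections should fix the special pairs $\{0,\infty\}$ for $\unfold{L}$, $\{0,-1\}$ for $\unfold{D}$ and $\{-1,\infty\}$ for $\unfold{U}$, and the unique involution fixing the last pair is $p\mapsto -p-2$ (the value $2-p$ fixes $\{1,\infty\}$ instead). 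With $p'=-p-2$ on the range $p\le-1$ the same $2\max$-computation yields $\complexity{p'}=2(-r-s)<-2r=\complexity{p}$ for $p<-1$, and the three ranges $[0,\infty]$, $[-1,0]$, $(-\infty,-1]$ then cover $\QQi$, so that whenever $\complexity{p}>2$ at least one unfolding map strictly decreases the complexity. I would therefore carry out (c) and the final clause with this corrected transformation and domain, explicitly recording the discrepancy with the printed $p'=2-p$.
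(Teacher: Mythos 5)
Your two-layer organization mirrors the paper's own proof: the paper identifies the slope transformation by exactly the semicircle bookkeeping you sketch (it carries this out only for $\unfold{L}$ --- the $\min(r,s)$ semicircles around $L$ that cross between $U$ and $D$ disappear after pulling $L$ through, the counts at $U$ and $D$ are unchanged, and involutivity handles $p<0$ --- and then declares (b) and (c) ``completely similar''), and the complexity comparison is elementary arithmetic in both treatments. Your identity $\complexity{\frac{r}{s}}=2\max\left(\abs{r},\abs{s},\abs{r+s}\right)$ is a tidier packaging of the paper's direct estimate $\abs{r}+\abs{s}+\abs{s-r}<\abs{r}+\abs{s}+\abs{s+r}$, and your topological layer, though only a plan, is no thinner than what the paper actually prints for (b) and (c).

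The substantive contribution is your flag on part (c), and it is correct: as printed, with $p'=2-p$ and $p\le -2$, the complexity increases by $6s$ exactly as you compute (test $p=-2$: $\complexity{-2}=4$ but $\complexity{4}=10$), and the hypotheses of (a)--(c) leave the interval $(-2,-1)$ uncovered, so the concluding covering clause fails; note that (c) is false on its entire domain, since $\complexity{p}=-2r\ge 4s>2$ for all $p\le-2$. Your repair $p'=-2-p$ on $p\le -1$ is the geometrically forced one: $\unfold{U}$ moves only $U$, hence must preserve the semicircle counts at $D$ and at $L$, i.e.\ $\abs{s}$ and $\abs{r+s}$; the map $(r,s)\mapsto(-r-2s,s)$ does this ($\abs{r'+s'}=\abs{r+s}$), whereas $2-p$ would alter the count at $L$, which $\unfold{U}$ never touches. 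It is also the version consistent with the paper's own use of the lemma --- the proof of Proposition~\ref{prop:alpha-connected} applies $\unfold{U}$ precisely when $p<-1$ --- and with the fact that the three reflections $-p$, $\frac{-p}{2p+1}$, $-2-p$ fix the pairs $\{0,\infty\}$, $\{0,-1\}$, $\{-1,\infty\}$ of low-complexity slopes, the edges of the base triangle. One small slip in your verification: the closed formula $\complexity{p'}=2(-r-s)$ holds only for $p\le-2$; for $-2<p<-1$ one has instead $\complexity{p'}=2s$ (e.g.\ $p=-\tfrac{3}{2}$, $p'=-\tfrac{1}{2}$, $\complexity{p'}=4$). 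But since each of $\abs{r+2s}$, $s$, $\abs{r+s}$ is strictly smaller than $\abs{r}$ whenever $p<-1$, your conclusion $\complexity{p'}<\complexity{p}=2\abs{r}$ stands on the whole corrected range, and with it the repaired lemma and the covering claim.
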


\begin{proof}
  We shall give the arguments only for the case (a) since 
  it is completely similar for (b) and (c). 
  We start by considering first an
  inner arc $\alpha_p^\Inner$ for some $p=\frac{r}{s}\geq 0$.  Notice
  that there are $m=\min(r,s)$ semicircles around $L$ which connect
  the points $\kappa_1,\ldots,\kappa_m$ above $U$ with the points
  $\kappa_{r+s-m+1},\ldots,\kappa_{r+s}$ below $D$, 
  see Figure~\ref{fig:red-cpxty}.

  After pulling $L$ between $U$ and $D$ to the position $L'$ only
  $\max(r,s)-\min(r,s)$ semicircles remain around $L'$, whereas the
  number of semicircles around $U$ and $D$ remains unchanged. Hence
  the resulting situation is obtained by our construction
  $\alpha_{p'}^\Inner$ where $p'=\frac{-r}{s}$. It is also clear that
  the construction is involutive, hence
  $\unfold{L}(\alpha_p^\Inner)=\alpha_{p'}^\Inner$ even if $p<0$. The
  argument is completely similar for outer arcs, that is
  $\unfold{L}(\alpha_p^\Outer)=\alpha_{-p}^\Outer$.
  
  Observe that $\complexity{p}= 2$ implies for $p\geq 0$ that $p=0$
  or $p=\infty$. Hence $\unfold{L}(\alpha_p^\varphi)=\alpha_p^\varphi$
  for such values.  Now assume that $p=\frac{r}{s}>0$ and
  $\unfold{L}(\alpha_p^\varepsilon)=\alpha_{p'}^\varepsilon$. Then we
  have by construction
  $\complexity{p'}=\abs{r}+\abs{s}+\abs{s-r}<\abs{r}+\abs{s}+\abs{s+r}=\complexity{p}$.
\end{proof}

\begin{figure}[!h]
  \begin{center}
    \includegraphics[scale=1,viewport=185 623 440 715,clip]{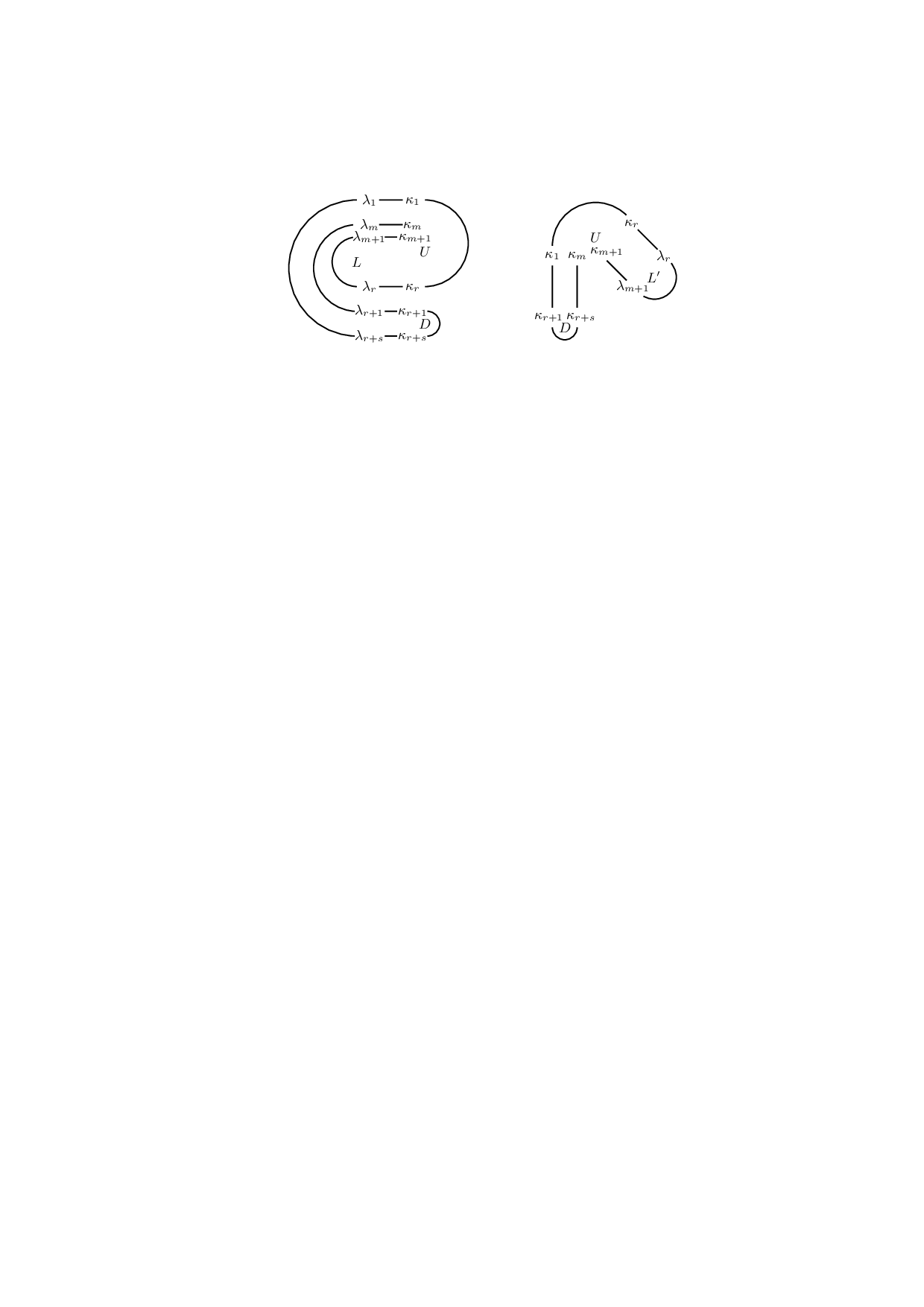}
  \end{center}
\caption{Reducing complexity (assume $r>s$)}
\label{fig:red-cpxty}
\end{figure}

\begin{proposition}
\label{prop:alpha-connected}
For each $p\in\QQi$ the arc $\alpha_p^\varepsilon$ 
is a connected curve which connects the same
endpoints than $\alpha_{\typc{p}}^{\varepsilon}$ as defined in
Definition \eqref{def:low-compl-arcs}.
\end{proposition}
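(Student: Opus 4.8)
The plan is to run an induction on the complexity $\complexity{p}$, using the three unfolding maps of the preceding lemma to descend to the base arcs. Recall that $\complexity{p}=2$ holds precisely for $p\in\{-1,0,\infty\}$, and that for exactly these three values one has $\typc{p}=p$ (e.g. $p=0$ has $a(p)=0$, $b(p)=1$, so $\typc{0}=0$, and similarly for $-1,\infty$). Hence the base case is immediate: when $\complexity{p}=2$ the arc $\alpha_p^\varepsilon=\alpha_{\typc{p}}^\varepsilon$ is, by Definition~\ref{def:low-compl-arcs}, literally the connected curve exhibited in Figure~\ref{fig:sepc-arcs}, which has the asserted endpoints.

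For the inductive step, suppose $\complexity{p}>2$. By the preceding lemma there is an unfolding map $\mu\in\{\unfold{L},\unfold{D},\unfold{U}\}$ with $\mu(\alpha_p^\varepsilon)=\alpha_{p'}^\varepsilon$ and $\complexity{p'}<\complexity{p}$. The observation I would make precise is that each unfolding map is a self-homeomorphism of the marked surface $(\surf,\marked)$ which fixes each of the four marked points $L,U,D,O$ and which is an involution; both facts are read off from the geometric description (for $\unfold{L}$, one drags $L$ across the line $UD$ to $L'$ and then reflects $L'$ back, fixing $U$, $D$ and the boundary puncture $O$, and the two reflections cancel). Since $\mu$ is then an involution, $\alpha_p^\varepsilon=\mu(\alpha_{p'}^\varepsilon)$, so $\alpha_p^\varepsilon$ is the image under a homeomorphism of the curve $\alpha_{p'}^\varepsilon$, which is connected by the inductive hypothesis; hence $\alpha_p^\varepsilon$ is connected. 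Moreover, because $\mu$ fixes the marked points, $\alpha_p^\varepsilon$ shares the endpoints of $\alpha_{p'}^\varepsilon$, which by induction are those of $\alpha_{\typc{p'}}^\varepsilon$.

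It then remains to identify $\typc{p'}$ with $\typc{p}$, and here I would record the routine mod-$2$ computation showing that each of the substitutions $p\mapsto -p$, $p\mapsto\tfrac{-p}{2p+1}$, $p\mapsto 2-p$ preserves the class of $(a(p),b(p))$ modulo $2$ and hence the type: for instance $p\mapsto 2-p$ sends $(a,b)$ to $(2b-a,b)\equiv(a,b)$, while $p\mapsto -p$ sends it to $(-a,b)\equiv(a,b)$ and $p\mapsto\tfrac{-p}{2p+1}$ sends it to $(-a,2a+b)\equiv(a,b)$ (sign- and reduction-normalization does not affect the parities, since $\gcd$ is preserved). Thus every unfolding map preserves the type, so $\typc{p'}=\typc{p}$, and the endpoints of $\alpha_p^\varepsilon$ are exactly those of $\alpha_{\typc{p}}^\varepsilon$, closing the induction.

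I expect the only genuine subtlety to be the observation singled out above, namely that the three pictorially-defined unfolding procedures assemble into marked-point-preserving involutive homeomorphisms (equivalently, mapping classes) of the sphere with four punctures. Once this is granted, both connectedness and the endpoint assignment are transported from the base arcs purely formally, since the counting identities behind the complexity drop and the equality $\mu(\alpha_p^\varepsilon)=\alpha_{p'}^\varepsilon$ are already furnished by the preceding lemma, so no further bookkeeping with the semicircles is required.
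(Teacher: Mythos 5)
Your proof is correct and follows essentially the same route as the paper's: induction on the complexity $\complexity{p}$, descending via the unfolding maps of the preceding lemma, with connectedness and endpoints transported because unfolding does not change them. Your only additions are details the paper leaves implicit (the explicit involutive-homeomorphism framing and the mod-$2$ check that each unfolding substitution preserves $\typc{p}$), which tighten but do not alter the argument.
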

 
\begin{proof}
  The proof is done by induction on the complexity of $p$. Clearly, if
  $\complexity{p}= 2$ then $p\in\{-1,0,\infty\}$ and the arc
  $\alpha_p^{\varepsilon}$ is connected. Otherwise we apply the
  unfolding map $\unfold{L}$, $\unfold{D}$ or $\unfold{U}$ depending whether 
  $0< p$, $-1<p<0$ or $p<-1$ respectively. By the preceding
  Lemma the resulting arc has lower complexity and is therefore
  connected by induction. But it is clear from the definition of the
  unfolding maps that they do not change the connectivity of the
  arcs nor the endpoints. Hence the result.
\end{proof}

\subsection{Compatibility of untagged arcs}

We now study when two arcs $\alpha_p^\varepsilon$ and
$\alpha_q^\varphi$ are compatible. For this we start with the simple
case when $q=-1,0,\infty$.

\begin{lemma}
\label{lem:intersection-nr}
  The untagged arc $\alpha_p^\varepsilon$ intersects the untagged arc
  $\alpha_q^\varphi$ (for $q=-1,0,\infty$) precisely
  \begin{equation}
    \label{eq:intersection-nr}
    (\alpha_p^\varepsilon\mid\alpha_q^\varphi)=
    \iofunc{-\varphi\varepsilon}{\qdist{p,q}}
  \end{equation}
  times.  In particular the function $(\alpha_p^\varepsilon \mid -)$ on arcs of low complexity is shown by
  writing the values on $\alpha_q^\varphi$ close to these arcs in the
  following pictures. For this let $p=\frac{r}{s}$ with $r,s$ coprime
  and $s>0$.  
  \begin{center}
    \includegraphics[scale=1,viewport=130 565 470 718,clip]{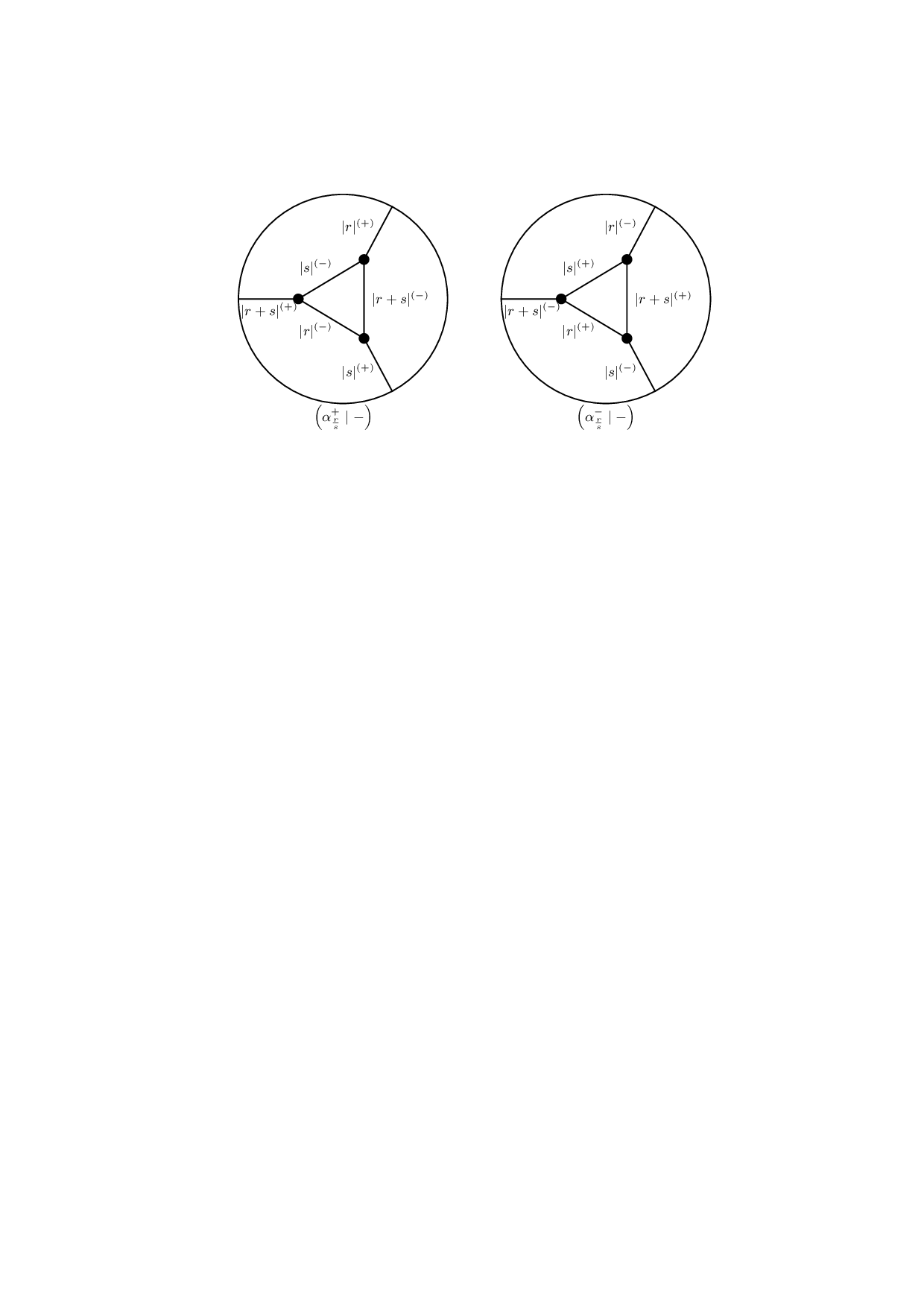}
  \end{center}
\end{lemma}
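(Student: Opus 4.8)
The plan is to read off the intersection numbers directly from the explicit construction of $\alpha_p^\varepsilon$, after first simplifying the distance $\qdist{p,q}$ for the three low-complexity slopes. Writing $p=\frac{r}{s}$ with $r,s$ coprime and $s>0$ and substituting $(a(q),b(q))=(0,1),(1,0),(-1,1)$ for $q=0,\infty,-1$ into $\qdist{p,q}=\abs{a(q)b(p)-a(p)b(q)}$ gives
\[
\qdist{p,0}=\abs{r},\qquad \qdist{p,\infty}=\abs{s},\qquad \qdist{p,-1}=\abs{r+s}.
\]
These are exactly the three integers that drive the construction: $\alpha_p^\varepsilon$ is assembled from three families of nested semicircles, of sizes $\iofunc{\varepsilon}{\abs{r}}$, $\iofunc{\varepsilon}{\abs{s}}$ and $\iofunckl{\varepsilon}{\abs{r+s}}$, centred at $U$, $D$ and $L$ respectively, together with the radial segments joining them (and, in the outer case, the segment to $O$). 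Thus to each low-complexity $q$ there corresponds a single distinguished puncture $P_q$, namely $P_0=U$, $P_\infty=D$, $P_{-1}=L$, and the claimed value is $\iofunc{-\varphi\varepsilon}{N_q}$ with $N_q:=\qdist{p,q}$ equal to the number governing the semicircles of $\alpha_p^\varepsilon$ about $P_q$.

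First I would localize the crossings. Each arc $\alpha_q^\varphi$ of Definition~\ref{def:low-compl-arcs} is one of the six simple curves of Figure~\ref{fig:sepc-arcs}, and up to isotopy it can be pushed so that it meets $\alpha_p^\varepsilon$ only among the semicircles centred at the single puncture $P_q$; the radial segments of $\alpha_p^\varepsilon$ and its semicircles about the two other punctures can be made disjoint from $\alpha_q^\varphi$. Hence the entire count is a local matter near $P_q$, controlled by $N_q$ together with the two types $\varepsilon,\varphi$. This reduces the statement to a local crossing count between a nested family of $\iofunc{\varepsilon}{N_q}$ semicircles about $P_q$ and the portion of $\alpha_q^\varphi$ passing near $P_q$.

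The sign $-\varphi\varepsilon$ then records how that portion of $\alpha_q^\varphi$ sits relative to the nest. When $\alpha_q^\varphi$ is outer ($\varphi=\Outer$) it runs out to $O$ and crosses each of the $\iofunc{\varepsilon}{N_q}$ semicircles about $P_q$ exactly once, so the count is $\iofunc{\varepsilon}{N_q}=\iofunc{-\varphi\varepsilon}{N_q}$. When $\alpha_q^\varphi$ is inner ($\varphi=\Inner$) it runs between the two punctures other than $P_q$, and the count becomes the opposite-type value $\iofunc{-\varepsilon}{N_q}=\iofunc{-\varphi\varepsilon}{N_q}$, which differs from the number of semicircles by exactly one when $N_q$ is even. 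The same local analysis applies verbatim for negative $p$: there the construction uses the ``two smallest values'' rule, but the three numbers $\abs{r},\abs{s},\abs{r+s}$, and hence $N_0,N_\infty,N_{-1}$, are unchanged.

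The main obstacle is precisely this last bookkeeping: verifying, across the four combinations of $(\varepsilon,\varphi)$ and the three punctures $P_q$, that the local count is $\iofunc{\varepsilon}{N_q}$ for outer $\alpha_q^\varphi$ and the opposite-type value $\iofunc{-\varepsilon}{N_q}$ for inner $\alpha_q^\varphi$, so that in every case it equals $\iofunc{-\varphi\varepsilon}{N_q}$. This is a finite inspection of the configurations in Figures~\ref{fig:sepc-arcs}, \ref{fig:def-arc} and~\ref{fig:outer-arc}, routine but error-prone because of the $\pm1$ discrepancy $\outerfunc{n}=\innerfunckl{n-1}$ between the two functions on even inputs. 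As a consistency check one can note that both sides of \eqref{eq:intersection-nr} are invariant under each unfolding map $\unfold{L},\unfold{U},\unfold{D}$: these are homeomorphisms of the punctured sphere, hence preserve geometric intersection numbers and the types $\varepsilon,\varphi$, while on the coprime pairs $(a(p),b(p))$ they act by integer matrices of determinant $-1$, so they change $a(p)b(q)-a(q)b(p)$ only by sign and thereby preserve $\qdist{p,q}$. This invariance reduces each verification to a single representative in its unfolding orbit.
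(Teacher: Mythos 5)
Your overall strategy coincides with the paper's: one computes $\qdist{p,0}=\abs{r}$, $\qdist{p,\infty}=\abs{s}$, $\qdist{p,-1}=\abs{r+s}$ and then reads the intersection numbers off the explicit construction of $\alpha_p^\varepsilon$, and for the outer arcs $\alpha_q^\Outer$ the paper argues exactly as you do (the arc from $P_q$ to $O$ crosses each of the semicircles about $P_q$ once). The gap is in your treatment of the inner arcs $\alpha_q^\Inner$. Such an arc joins the two punctures \emph{other} than $P_q$ (for instance $\alpha_\infty^\Inner$ is the arc $LU$, while $P_\infty=D$), so it does not pass near $P_q$ at all, and its intersections with $\alpha_p^\varepsilon$ cannot be localized to the semicircles about $P_q$: they occur along the joining segments $\kappa_i\lambda_i$. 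The paper counts them there: a segment $\kappa_i\lambda_i$ of $\alpha_p^\Inner$ (for $p=\tfrac{r}{s}>0$) crosses $LU$ if and only if $\lambda_i$ lies above $L$ and $\kappa_i$ lies below $U$, i.e.\ if and only if $r-\innerfunc{r}\le i\le \innerfunckl{r+s}$, and the number of such indices is then checked, case by case on $\typc{p}$, to equal $\outerfunc{\abs{s}}=\outerfunc{\qdist{p,\infty}}$. This count involves the semicircle data at $U$ and at $L$ simultaneously; that it collapses to a function of $\abs{s}$ alone is precisely the content of the lemma in this case and is not visible locally at $D$. So the ``$\pm 1$ discrepancy'' you flag as error-prone bookkeeping is in fact the one step that genuinely requires a computation, and your localization heuristic gives no way to carry it out.

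A secondary point: the unfolding maps cannot serve as a reduction within the scope of this lemma, because $\unfold{L}$ sends $q=-1$ to $q=1$, whose complexity is $4$, so the transformed pair leaves the hypothesis $q\in\{-1,0,\infty\}$. The paper invokes the invariance of intersection numbers under unfolding only afterwards, in the proof of Proposition~\ref{prop:comp-untagged}, where $q$ is arbitrary and only the vanishing of the intersection number is at stake. As a consistency check your observation is fine, but it does not shrink the list of cases to be verified here.
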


\begin{proof}
  Let $\beta=\alpha_p^\Inner$ and assume that $p>0$. Then, clearly
  $(\beta\mid \alpha_0^\Outer)$ equals the number of semicircles drawn around the
  point $U$, that is $(\beta\mid\alpha_0^\Outer)= \innerfunc{r}=
  \innerfunc{\qdist{p,0}}$.  Similarly
  $(\beta\mid\alpha_\infty^\Outer)= \innerfunc{s}=
  \innerfunc{\qdist{p,\infty}}$ and since there are
  $\innerfunckl{r+s}$ semicircles around $L$ we have also
  $(\beta\mid\alpha_{-1}^\Outer)= \innerfunckl{r+s}=
  \innerfunc{\qdist{p,-1}}$.  For the following, we use the
  notations as introduced in Figure~\ref{fig:def-arc}.  The segments
  $\kappa_i\lambda_i$ intersect $LU$ if and only if $\lambda_i$ lies
  above $L$ and $\kappa_i$ lies below $U$, that is, if and only if
  $i\leq \innerfunckl{r+s}$ and $i\geq r-\innerfunc{r}$. Therefore, we
  have $(\beta\mid
  \alpha_\infty^\Inner)=\innerfunckl{r+s}-r+\innerfunc{r}\geq 0$ if
  such an index exists and $(\beta\mid \alpha_\infty^\Outer)=0$ otherwise,
  when $\innerfunckl{r+s}-r+\innerfunc{r}<0$ . Thus, in case
  $\typc{p}=0$, we have
  $$
  (\beta\mid \alpha_{\infty}^\Inner)=
  \max(0,\tfrac{r+s-1}{2}-r+\tfrac{r-1}{2})=\max(0, \tfrac{s}{2}-1)=
  \outerfunc{s}=
  \outerfunc{\qdist{p,\infty}}
  $$
  Similarly, in case $\typc{p}=-1$ or $\typc{p}=\infty$, we have
  $$ 
  (\beta\mid \alpha_{\infty}^\Inner)=
  \max(0,\tfrac{r+s}{2}-r+\tfrac{r-1}{2})=\max(0,\tfrac{s-1}{2}-1)=
  \outerfunc{s}=
  \outerfunc{\qdist{p,\infty}}.
  $$ 
  Similarly one verifies that
  $(\beta\mid\alpha_0^\Inner)=\outerfunc{\qdist{p,0}}$.
  This shows that $(\alpha_p^\Inner\mid\alpha_q^\varphi)=\iofunc{-\varphi}{\qdist{p,q}}$
  whenever $p>0$.
  The cases where $\beta=\alpha_p^\Inner$ with $p<0$ are dealt with similarly. Also, the case where
  $\beta=\alpha_p^\Outer$ can be solved in much the same way. This shows
  the claim.
\end{proof}

\begin{proposition}
  \label{prop:comp-untagged}
  Two untagged arcs $\alpha_p^\varepsilon$ and $\alpha_q^\varphi$ are 
  compatible if and only if 
  $$
  \qdist{p,q}\leq \begin{cases}2,&\text{ if $\varepsilon=\varphi$,}\\
    1,&\text{ if $\varepsilon\neq\varphi$.}
    \end{cases}
  $$
\end{proposition}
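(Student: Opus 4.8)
The plan is to reduce the general statement to the special slopes $q\in\{-1,0,\infty\}$, where the intersection numbers have already been computed, and to carry out that reduction with the unfolding maps. Throughout I use that two untagged arcs are compatible precisely when their geometric (minimal) intersection number vanishes, and that the explicitly drawn curves of Lemma~\ref{lem:intersection-nr} realise this minimal number. \emph{Base case.} Fix $q\in\{-1,0,\infty\}$ and an arbitrary $p\in\QQi$. Lemma~\ref{lem:intersection-nr} gives $(\alpha_p^\varepsilon\mid\alpha_q^\varphi)=\iofunc{-\varphi\varepsilon}{\qdist{p,q}}$. When $\varepsilon=\varphi$ the exponent is $-\varphi\varepsilon=\Outer$, and $\outerfunc{n}=0$ exactly for $n\leq 2$, so the two arcs are compatible iff $\qdist{p,q}\leq 2$; when $\varepsilon\neq\varphi$ the exponent is $-\varphi\varepsilon=\Inner$, and $\innerfunc{n}=0$ exactly for $n\leq 1$, so they are compatible iff $\qdist{p,q}\leq 1$. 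This is precisely the asserted dichotomy in these cases.

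Next I record two invariances of the unfolding maps. First, each of $\unfold{L},\unfold{D},\unfold{U}$ is realised by a single homeomorphism of the marked surface $(\surf,\marked)$ — the composite of a point-pushing isotopy carrying one puncture to its mirror position with the reflection across the opposite side — which sends $\alpha_p^\varepsilon$ to $\alpha_{p'}^\varepsilon$ for every $p$ at once, where $p'=-p$, $p'=\tfrac{-p}{2p+1}$, $p'=2-p$ respectively. Being a homeomorphism it preserves geometric intersection numbers, hence compatibility: $\alpha_p^\varepsilon\compat\alpha_q^\varphi$ if and only if $\alpha_{p'}^\varepsilon\compat\alpha_{q'}^\varphi$, and it leaves the superscripts $\varepsilon,\varphi$ unchanged. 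Second, each slope substitution $p\mapsto p'$ is the M\"obius action of an integer matrix of determinant $\pm 1$ on the column $(a(p),b(p))^\transp$; since $\qdist{p,q}=\abs{a(q)b(p)-a(p)b(q)}$ is the absolute value of the $2\times 2$ determinant with columns $(a(p),b(p))^\transp$ and $(a(q),b(q))^\transp$, it is merely multiplied by $\det=\pm 1$ and therefore satisfies $\qdist{p',q'}=\qdist{p,q}$.

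The proposition now follows by induction on $\complexity{q}$. If $\complexity{q}=2$ then $q\in\{-1,0,\infty\}$ and we are in the base case. If $\complexity{q}>2$, the complexity-reduction lemma (as used in the proof of Proposition~\ref{prop:alpha-connected}) supplies an unfolding map $\mu$ with $\mu(\alpha_q^\varphi)=\alpha_{q'}^\varphi$ and $\complexity{q'}<\complexity{q}$; write also $\mu(\alpha_p^\varepsilon)=\alpha_{p'}^\varepsilon$. By the induction hypothesis applied to the pair $(p',q')$, the arcs $\alpha_{p'}^\varepsilon$ and $\alpha_{q'}^\varphi$ are compatible iff $\qdist{p',q'}\leq 2$ (when $\varepsilon=\varphi$) or $\qdist{p',q'}\leq 1$ (when $\varepsilon\neq\varphi$). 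Transporting the compatibility statement back along the homeomorphism $\mu$ and using $\qdist{p',q'}=\qdist{p,q}$ yields exactly the claim for $(p,q)$.

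The genuine difficulty here is geometric rather than combinatorial: one must verify that each unfolding procedure is indeed a single homeomorphism of $(\surf,\marked)$ acting uniformly on all arcs $\alpha_p^\varepsilon$, so that it may be applied simultaneously to $\alpha_p^\varepsilon$ and $\alpha_q^\varphi$ and hence preserve their compatibility; this is exactly what legitimises the two invariances above. Once that is in place, the determinant computation for $\qdist{}$ and the complexity induction are routine. A secondary point worth confirming is that the curves drawn in Lemma~\ref{lem:intersection-nr} admit no bigons, so that the displayed counts really are the minimal intersection numbers and ``count zero'' coincides with ``compatible''.
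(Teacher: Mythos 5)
Your proof is correct and follows essentially the same route as the paper's: the base case $q\in\{-1,0,\infty\}$ is read off from Lemma~\ref{lem:intersection-nr} exactly as there, and the general case is handled by induction on $\complexity{q}$ via the unfolding maps. The only difference is that you make explicit two facts the paper uses tacitly --- that each unfolding is a homeomorphism of $(\surf,\marked)$ acting simultaneously on both arcs, and that $\qdist{p,q}$ is preserved because the slope substitutions are integral M\"obius transformations of determinant $\pm 1$ --- which is a welcome tightening rather than a new approach.
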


\begin{proof}
  Let $p=\frac{r}{s}$.  The proof is done by induction on the
  complexity $\complexity{q}$. Assume first that $\complexity{q}=2$,
  that is, $q=-1,0$ or $\infty$ and let  $x:=\qdist{p,q}=\abs{r+s}$, $\abs{r}$ or $\abs{s}$
  respectively.

  By definition, $\alpha_p^\varepsilon$ is compatible with
  $\alpha_q^\varphi$ if and only if
  $(\alpha_p^\varepsilon\mid\alpha_q^\varphi)=0$, that is by Lemma
  \ref{lem:intersection-nr}, if and only if
  $\iofunc{-\varphi\varepsilon}{x}=0$.

  In case $\varphi=\varepsilon$, this is equivalent to
  $\outerfunc{x}=0$, which happens if and only if $x=0$ or $x>0$ is
  even and $x\leq 2$ or $x$ is odd and $x\leq 1$, that is, if and only
  if $x\leq 2$.  Similarly, if $\varphi\neq \varepsilon$, then the two
  arcs are compatible if and only if $\innerfunc{x}=0$, that is, if
  and only if $x\leq 1$.

  If $\complexity{q}>2$ then we apply an appropriate unfolding map to
  both arcs to get $\alpha_{p'}^\varepsilon$ and $\alpha_{q'}^\varphi$
  with $\complexity{q'}<\complexity{q}$ and the result follows since
  the unfolding does not change the number of intersections.
\end{proof}

\subsection{Tagged arcs}

We are now able to define a tagged arc $\alpha_{p,x}$ for each
$p\in\QQi$ and each $x\in H$. First we define such arcs
for slopes of
low complexity, that is for $p=-1,0,\infty$ as shown in 
Figure~\ref{fig:LowCpx}.

\begin{figure}[th] 
  \begin{center}
    \includegraphics[scale=1,viewport=130 596 470 718,clip]{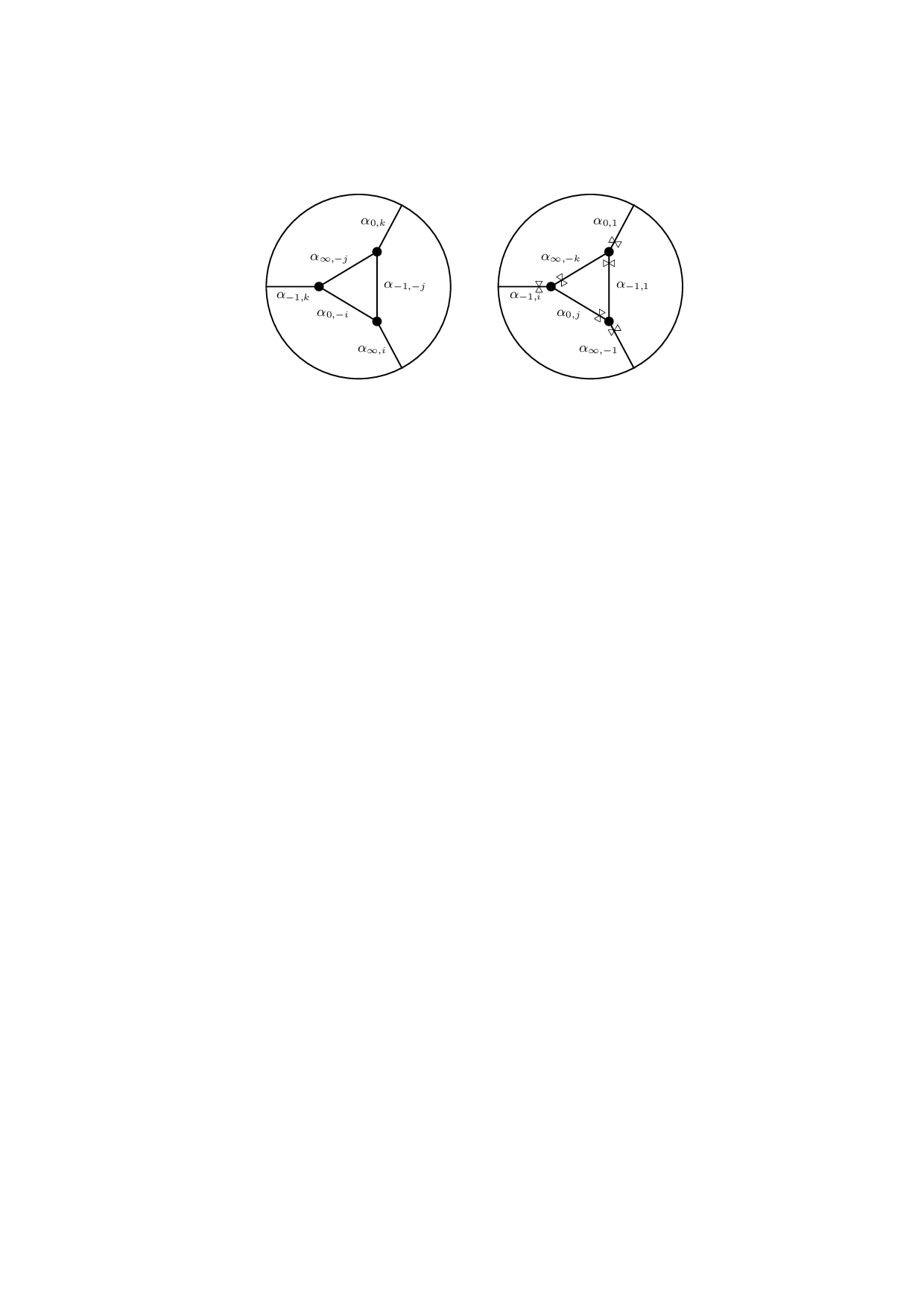}
  \end{center}
\caption{Tagged arcs of low complexity} \label{fig:LowCpx}
\end{figure}
Furthermore, for $p=0,\ -1,\ \infty$, we define $\alpha_{p,-x}$ to be
the arc obtained from $\alpha_{p,x}$ by switching the tags on both
ends from plain to notched and vice versa. In this way we have defined
24 arcs $\alpha_{p,x}$ for $p=0,\ -1,\ \infty$ and $x\in H$. We shall
identify the untagged arcs for these slopes in the obvious way, that
is, $\alpha_{-1}^{\Inner}=\alpha_{-1,-j}$ and
$\alpha_{\infty}^{\Outer}=\alpha_{\infty,i}$, for instance.

\begin{definition} \label{def:tarc}
For each $p\in\QQi$ and 
$x\in H$  define the arc $\alpha_{p,x}$ to be the arc connecting the same vertices as $\alpha_{\typc{p},x}$ 
having the same tags at the endpoints as $\alpha_{\typc{p},x}$, and the untagged 
version of $\alpha_{p,x}$ is either $\alpha_p^{\Inner}$ or $\alpha_p^{\Outer}$. More precisely, 
the untagged version $\alpha_{p,x}^\circ$ of $\alpha_{p,x}$ is as follows:
\begin{equation*}
\alpha_{p,x}^\circ=
\begin{cases}
\alpha_p^{\Inner},&\text{if }(\typc{p},x)\in\{(0,\pm i),(0,\pm j),(-1,\pm 1), (-1,\pm j),(\infty,\pm j),(\infty,\pm k)\},\\
\alpha_p^{\Outer},&\text{else.} 
\end{cases}
\end{equation*}
Note, that two tagged arcs $\alpha_{p,x}$ and $\alpha_{q,y}$ are
isotopic if and only if $(p,x)=(q,y)$.
\end{definition}

\begin{theorem}
  \label{thm:comp}
   Let $p,q\in\QQi$ and $x,y\in H$. Then the two arcs $\alpha_{p,x}$
   and $\alpha_{q,y}$ are compatible if and only if the vectors
   $\bv_p^x$ and $\bv_q^y$ are compatible.
\end{theorem}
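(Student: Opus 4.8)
The plan is to match two fully explicit combinatorial criteria that have already been assembled in the preceding sections: the criterion for compatibility of the tagged arcs $\alpha_{p,x}$, and the criterion for Ext-orthogonality of the real Schur roots $\bv_p^x$. On the sheaf side the decisive simplification is that, since $\XX$ is of type $(2,2,2,2)$, every positive real Schur root has quasi-length $1$ (Proposition~\ref{prp:SchurRt}), so the general Ext-orthogonality proposition collapses. If $p\neq q$, say $p>q$, then $\Hom$ from the higher to the lower slope vanishes by stability (Proposition~\ref{coh:prp-stab}(b)), hence one of the two $\Ext^1$-spaces is automatically zero and $\bv_p^x,\bv_q^y$ are compatible if and only if $\euler{\bv_p^x,\bv_q^y}=0$; this is exactly what Proposition~\ref{prop:orto} and Corollary~\ref{cor:orto} evaluate in terms of $\qdist{p,q}$ and $H$. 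If $p=q$, the eight roots of slope $q$ are the quasi-simples of four mutually orthogonal rank-$2$ tubes, the two quasi-simples of one tube being the pair $\bv_q^x,\bv_q^{-x}$ (they sum to $\bh_q$ by Lemma~\ref{lem:correct-slope}); hence $\bv_q^x,\bv_q^y$ are compatible if and only if $x\neq -y$, which is precisely Lemma~\ref{lem:orto}. Thus the root side is entirely governed by Proposition~\ref{prop:orto}, Corollary~\ref{cor:orto} and Lemma~\ref{lem:orto}.

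On the arc side I would first record the dictionary $x\mapsto(\text{vertices},\text{tags},\text{inner/outer})$ at the base slopes $p\in\{-1,0,\infty\}$ read off Figure~\ref{fig:LowCpx}, together with the two structural rules that $\alpha_{p,-x}$ is obtained from $\alpha_{p,x}$ by switching both tags, and that the underlying untagged arc $\alpha_{p,x}^\circ\in\{\alpha_p^{\Inner},\alpha_p^{\Outer}\}$ is dictated by $(\typc{p},x)$ through the case list in the definition of $\alpha_{p,x}$. Combining Proposition~\ref{prop:comp-untagged} with the Fomin--Shapiro--Thurston tagged compatibility rule then yields a purely combinatorial test: when the two underlying arcs differ the pair is compatible iff those underlying arcs are compatible (governed by $\qdist{p,q}\leq 2$ or $\leq 1$ according to whether the inner/outer labels agree) \emph{and} the tags coincide on every common endpoint; when the underlying arcs coincide the pair is compatible iff they share a tag on at least one endpoint, that is, iff they are not the full double tag-switch.

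The theorem is the assertion that these two tests agree for all $(p,x),(q,y)$, and I would organise the verification by the relation between the slopes. For $p=q$ it is immediate from $\qdist{p,p}=0$: the arcs $\alpha_{p,x}$ and $\alpha_{p,-x}$ have the same underlying arc (each inner/outer class being closed under $x\mapsto -x$) with opposite tags at both ends, so they are incompatible, while for $y\neq -x$ the two arcs either have distinct, automatically compatible underlying arcs with matching tags on common endpoints, or share the same underlying arc without being the full switch and hence share a tag; either way they are compatible, matching $x\neq -y$. For $p\neq q$ one runs through the nine type-pairs $(\typc{p},\typc{q})$ of Proposition~\ref{prop:orto} and Corollary~\ref{cor:orto}, checking that the displayed value of $\qdist{p,q}$ ($=2$ for equal types, $=1$ otherwise) together with the displayed $H$-relation (one of $x=y$, $y\in H^+x$, $y\in -H^+x$, $x\in H^+y$, $x\in -H^+y$) reproduces exactly the inner/outer-shifted distance bound of Proposition~\ref{prop:comp-untagged} and the tag-agreement condition. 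To keep this finite I would reduce the complexity of $p$ and $q$ by simultaneously applying the unfolding maps $\unfold{L},\unfold{D},\unfold{U}$: by the proof of Proposition~\ref{prop:comp-untagged} these preserve arc intersection numbers, hence tagged compatibility, and they act on slopes as in the unfolding lemma while permuting the $H$-labels, so it suffices to treat $q\in\{-1,0,\infty\}$, where Lemma~\ref{lem:intersection-nr} supplies the intersection numbers in closed form.

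The main obstacle I anticipate is exactly this last matching: verifying that under the dictionary the \emph{quaternionic} distinction between $y\in H^+x$ and $y\in -H^+x$ encodes precisely the \emph{geometric} distinction between ``tags agree on the common endpoint'' and ``tags disagree'', coherently across all inner/outer cases and sign conventions, and that the unfolding maps act on the $H$-labels compatibly with the isometry they induce on $\KX$ (so that reducing to base slopes is legitimate simultaneously on both sides). This is a finite but delicate bookkeeping problem; once the correspondence at the base slopes is fixed so that a single tag-switch corresponds to $x\mapsto -x$ and the admissible changes of common endpoint correspond to left multiplication by the generators $i,j,k$ of $H^+$, the nine tables fall into line and the stated equivalence follows.
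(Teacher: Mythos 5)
Your overall strategy coincides with the paper's: split into $p=q$ and $p\neq q$; settle $p=q$ by observing that same-slope underlying arcs are always compatible, so only the tag rule matters, and match this against Lemma~\ref{lem:orto}; settle the base case $p,q\in\{-1,0,\infty\}$ by direct inspection of Figure~\ref{fig:LowCpx} against Proposition~\ref{prop:orto} and Corollary~\ref{cor:orto}; and treat general distinct slopes by combining Proposition~\ref{prop:comp-untagged} with the Fomin--Shapiro--Thurston tag rule. Your sheaf-side reduction (quasi-length one, vanishing of one $\Ext^1$ across distinct slopes by stability and Serre duality) is also exactly what the paper's Ext-orthogonality discussion provides. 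The one place you genuinely diverge is the final reduction: you propose to apply the unfolding maps $\unfold{L},\unfold{D},\unfold{U}$ to the \emph{tagged} arcs and to track an induced permutation of the quaternion labels, and you rightly flag this bookkeeping --- and its compatibility with a corresponding isometry of $\KX$ --- as the main obstacle. The paper shows this detour is unnecessary: unfolding has already been consumed in the proof of Proposition~\ref{prop:comp-untagged}, which resolves compatibility of the \emph{untagged} arcs for arbitrary slopes purely in terms of $\qdist{p,q}$ and the inner/outer labels, while the endpoints and tags of $\alpha_{p,x}$ are \emph{by definition} those of the base arc $\alpha_{\typc{p},x}$. Hence for $p\neq q$ no further reduction is needed: either $\qdist{p,q}=2$, in which case the types and inner/outer labels must agree, both endpoints are common, and tag agreement at both ends is exactly $x=y$; or $\qdist{p,q}=1$ and compatibility is exactly the base-type condition of table~\eqref{eq:comp-spec-tagged-arcs}; both alternatives are then read off against Proposition~\ref{prop:orto} and Corollary~\ref{cor:orto}. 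If you reorganize your last step this way, the ``delicate bookkeeping problem'' you anticipate disappears and your argument closes without any statement about how unfolding acts on $H$-labels.
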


\begin{proof}
Let $\beta$ and $\gamma$ be two tagged arcs with underlying
  non-tagged arcs $\beta^\circ$ and $\gamma^\circ$.

  First we study the case where $p=q$. Then the underlying untagged
  arcs are always compatible and hence we only have to care for the
  condition on the tags, which can already be done by looking at the
  case where $p,q\in\{-1,0,\infty\}$. Hence $\alpha_{p,x}$ is
  compatible with $\alpha_{p,y}$ if and only if $x\neq -y$, that is,
  if and only if $\bv_p^x$ is compatible with $\bv_{p}^y$ by Lemma
  \ref{lem:orto}.

  Now, we assume that $p\neq q$ and deal first with the case where
  $p,q\in\{-1,0,\infty\}$. By direct inspection of
Figure~\ref{fig:LowCpx} 
we get that $\alpha_{p,x}$ is compatible with
  $\alpha_{q,y}$ if and only if the condition of the corresponding
  cell in the following table is satisfied.
  \begin{equation}
    \label{eq:comp-spec-tagged-arcs}
    \begin{tabular}{c|c|c|c|}
      & $q=-1$ & $q=0$ & $q=\infty$\\
      \hline
      $p=-1$ & $x\neq -y$ & $x\in H^+ y$ & $y\in H^+ x$ \\
      \hline
      $p=0$ & $y\in H^+ x$  & $x\neq -y$ & $x\in - H^+ y$ \\ 
      \hline
      $p=\infty$& $x\in H^+ y$ & $y\in - H^+ x$ & $x\neq -y$\\
      \hline
    \end{tabular}
  \end{equation}
  Since $\qdist{p,q}\leq 2$ in any such case we get by Proposition
  \ref{prop:orto} that this happens if and only if $\bv_p^x$ is
  compatible with $\bv_q^y$.

  Now, let $p,q\in \QQi$ be arbitrary with $p\neq q$. Notice that
  therefore $\alpha_{p,x}^\circ\neq \alpha_{q,y}^\circ$. Observe also
  that $\typc{p}=\typc{q}$ holds if and only if $\qdist{p,q}$ is even.
  Therefore the two arcs $\alpha_{p,x}$ and $\alpha_{q,y}$ are
  compatible by Proposition \ref{prop:comp-untagged} if and only if
  either $\qdist{p,q}=2$ and the tags on both ends coincide, that is
  $x=y$, or $\qdist{p,q}=1$ and the arcs $\alpha_{\typc{p},x}$ and
  $\alpha_{\typc{q},y}$ are compatible, which happens if and only if
  the condition in \eqref{eq:comp-spec-tagged-arcs} is satisfied. In
  both cases, this is equivalent to the condition that $\bv_p^x$ and
  $\bv_q^y$ are compatible by Proposition~\ref{eq:cond-comp} 
  and Corollary~\ref{cor:orto}.
  \end{proof}

\subsection{Connection to real Schur roots}

We can find by Proposition~\ref{prop:exceptional-to-vectors} and 
Lemma~\ref{lem:correct-slope} for each $(q,x)\in \QQi\times H$ 
up to isomorphism a unique $E^x_q\in\Exc$ such that $\dimv E^x_q =\bv_q^x$.
In fact, we obtain a bijection $(q,x)\mapsto [E^x_q]$ from $\QQi\times H$ 
to the isoclasses of $\Exc$.

\begin{theorem} \label{thm:bijection}
  The map $\Psi:\alpha_{p,x}\mapsto [E^x_p]$ is a bijection between
  the set of all tagged arcs of the $2$-sphere with 4 punctures
  $(\surf,\marked)$ to the set of isoclasses of indecomposable rigid
  objects in
  $\coh \XX$, where $\XX$ has weight type $(2,2,2,2)$.  Moreover,
  two tagged arcs are compatible if and only if their images under 
  $\Psi$ are compatible. Therefore $\Psi$ induces an isomorphism between 
  the exchange graph
  $\tEG(\surf,\marked)$ and the mutation-exchange graph of tilting
  objects in $\coh \XX$. 
\end{theorem}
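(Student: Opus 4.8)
The plan is to read off all three assertions from the factorization $\Psi(\alpha_{p,x})=\Phi(\bv_p^x)$, so that $\Psi$ is the composite of $\Phi$ with the two parametrizations $(p,x)\mapsto\bv_p^x$ and $\alpha_{p,x}\mapsto(p,x)$. By Proposition~\ref{prop:exceptional-to-vectors} together with Proposition~\ref{prop:real-pos-Schur}, the map $\Phi$ is a bijection from $\{\bv_p^x\mid p\in\QQi,\,x\in H\}$ onto $\Exc$; and by Lemma~\ref{lem:correct-slope} the assignment $(p,x)\mapsto\bv_p^x$ is injective, while Proposition~\ref{prop:exceptional-to-vectors} identifies its image with the full set of positive real Schur roots, that is, with the domain of $\Phi$. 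Hence bijectivity of $\Psi$ reduces entirely to the combinatorial claim that $(p,x)\mapsto\alpha_{p,x}$ is a bijection from $\QQi\times H$ onto the set of \emph{all} tagged arcs of $(\surf,\marked)$; note that injectivity of this arc parametrization is needed already to see that $\Psi$ is well defined on tagged arcs.

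For the injectivity I would use the intersection counts of Lemma~\ref{lem:intersection-nr}: the numbers $(\alpha_{p,x}^\circ\mid\alpha_q^\varphi)$ for $q\in\{-1,0,\infty\}$ recover $\qdist{p,-1}$, $\qdist{p,0}$ and $\qdist{p,\infty}$, and from these three distances one recovers the coprime pair $(a(p),b(p))$, hence $p$; once $p$ is fixed, the eight arcs $\alpha_{p,x}$ are separated by their endpoints, their tags and the inner/outer alternative, so $(p,x)$ is determined by $\alpha_{p,x}$. Surjectivity --- that every tagged arc actually occurs in the list $\{\alpha_{p,x}\}$ --- is where the real work lies, and it is the step I expect to be the main obstacle. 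The idea is to realize the three unfolding operations $\unfold{L},\unfold{D},\unfold{U}$ as mapping classes of the $4$-punctured sphere, to assign to an \emph{arbitrary} tagged arc a complexity (for instance its total geometric intersection number with the base arcs $\alpha_q^\varphi$, $q\in\{-1,0,\infty\}$, of Definition~\ref{def:low-compl-arcs}), and to show that whenever this complexity exceeds the minimal value one of the unfolding maps strictly decreases it while carrying tagged arcs to tagged arcs. Since the arcs of minimal complexity are exactly the $24$ explicitly drawn arcs $\alpha_{p,x}$ with $p\in\{-1,0,\infty\}$, and since the unfolding maps permute the family $\{\alpha_{p,x}\}$ among itself by the complexity-reduction lemma and Proposition~\ref{prop:alpha-connected}, an induction on complexity then forces every tagged arc to be some $\alpha_{p,x}$.

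The compatibility assertion requires essentially no new argument. By the definition of compatibility for indecomposable rigid sheaves, two vectors $\bv_p^x$ and $\bv_q^y$ are compatible precisely when $\Phi(\bv_p^x)\oplus\Phi(\bv_q^y)$ is rigid, that is, when $\Psi(\alpha_{p,x})$ and $\Psi(\alpha_{q,y})$ are compatible. Theorem~\ref{thm:comp} states exactly that $\alpha_{p,x}$ and $\alpha_{q,y}$ are compatible if and only if $\bv_p^x$ and $\bv_q^y$ are, so $\Psi$ transports the compatibility relation on tagged arcs to the compatibility relation on $\Exc$ in both directions.

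Finally I would deduce the exchange-graph isomorphism formally from the previous two points. The complex $\tAC(\surf,\marked)$ is by definition the clique complex of the compatibility relation on tagged arcs, and a compatibility-preserving bijection induces an isomorphism of clique complexes; in particular it matches maximal cliques with maximal cliques. On the sheaf side the additivity $\Ext^1_\XX(\bigoplus_i X_i,\bigoplus_j X_j)=\bigoplus_{i,j}\Ext^1_\XX(X_i,X_j)$ shows that a direct sum of indecomposable rigid sheaves is rigid if and only if its summands are pairwise compatible, so the tilting (= maximal rigid) objects are exactly the maximal cliques of the compatibility relation on $\Exc$, all of cardinality $n=6$. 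Thus $\Psi$ identifies tagged triangulations with tilting objects; and since flips of tagged triangulations and mutations of tilting objects are both the passage between two facets sharing a codimension-one face --- the two complements of an almost complete tilting object from Remark~\ref{rem:tilt-clutilt} matching the unique flip of a tagged triangulation --- the induced map is an isomorphism between $\tEG(\surf,\marked)$ and the mutation-exchange graph of tilting objects in $\coh\XX$.
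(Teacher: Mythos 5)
Your treatment of the compatibility statement and of the passage to the exchange graphs is correct and essentially the argument the paper intends: compatibility of arcs is transported to Ext-orthogonality by Theorem~\ref{thm:comp}, maximal cliques on both sides are the tagged triangulations resp.\ the tilting objects (all of cardinality $6$), and flips match mutations via the unique-flip property of \cite{FST} on one side and the two-complements property from Remark~\ref{rem:tilt-clutilt} on the other. Your explicit verification that $(p,x)\mapsto\alpha_{p,x}$ is injective (recovering $p$ from the intersection numbers with the base arcs via Lemma~\ref{lem:intersection-nr}, then $x$ from endpoints, tags and the inner/outer alternative) is also sound, and is in fact more careful than the paper, which asserts the bijectivity of $\Psi'$ onto the indecomposable rigid objects without comment.

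The genuine gap is exactly where you predicted it: surjectivity of $(p,x)\mapsto\alpha_{p,x}$ onto \emph{all} tagged arcs. Your proposed route --- promote $\unfold{L},\unfold{D},\unfold{U}$ to mapping classes, define a complexity for an arbitrary tagged arc by intersection with the base arcs, and show some unfolding strictly decreases it above the minimum --- is plausible but is only a plan. The paper's complexity-reduction lemma is proved only for arcs already of the form $\alpha_p^\varepsilon$, by explicit inspection of their construction; extending it to an arbitrary tagged arc is essentially the classification of simple arcs on the $4$-punctured sphere by a Euclidean-algorithm/Farey argument, and none of the paper's lemmas supply it. You would also need to enumerate the minimal-complexity arcs and check there are exactly $24$ of them. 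The paper closes this gap by an entirely different, representation-theoretic argument that you should compare with: complete an arbitrary tagged arc $\beta$ to a tagged triangulation $T$, use connectedness of $\tEG(\surf,\marked)$ (\cite{FST}) to choose a mutation path from $T$ to the explicit triangulation $T'=\{\alpha_q^\varepsilon\mid q=-1,0,\infty\}$, and then run the inverse mutation sequence on the tilting object $\Psi'(T')$ in $\coh\XX$. Since every tilting object decomposes into indecomposable rigid summands, all of which lie in the image of $\Psi'$, and since at each step the flipped arc and the mutated summand are forced to correspond (each being the unique second complement), one concludes $T$ itself consists of arcs $\alpha_{p,x}$, hence so does $\beta$. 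This trades your topological induction for the connectivity of the exchange graph plus \cite[Prop.~5.14]{Hueb96}, and is the step you would need to adopt (or else fully carry out the mapping-class argument) to make your proof complete.
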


\begin{proof}
Using the above remark and Definition~\ref{def:tarc} we can define
 a bijection $\Psi'$ from the set of tagged arcs 
$\{\alpha_{p,x}\mid (p,x) \in\QQi\times H\}$
to the  isomorphism classes of indecomposable rigid objects in 
$\coh \XX$ by taking 
$\Psi'(\alpha_{p,x}):=[E^x_p]$ for all $(p,x)\in\QQi\times H$.

By Theorem~\ref{thm:comp} two tagged arcs are
  compatible if and only if their images under $\Psi'$ are Ext-orthogonal 
in $\coh\XX$. It remains to see that each tagged arc of $(\surf,\marked)$ is
  of the form $\alpha_{p,x}$ for some $p\in\QQi$ and some $x\in H$.
  By \cite{FST}, the exchange graph $\tEG(\surf,\marked)$ is connected
  and each tagged triangulation consists of precisely $6$ tagged
  arcs. Consequently if $\beta$ is any tagged arc, we can complete
  $\beta$ to a tagged  triangulation $T$ of $(\surf,\marked)$. Let
  $\mu_{i_1},\ldots,\mu_{i_L}$ be a sequence of mutations such that
  $\mu_{i_L}\cdots\mu_{i_1}(T)=\{\alpha_q^\varepsilon\mid
  q=-1,0,\infty;\ \  \varepsilon=\pm\}=\colon T'$.  Then
  $\mu_{i_1}\cdots\mu_{i_L}(\Psi'(T'))$ is a tilting object in $\coh
  \XX$ and therefore of the form $\Psi'(T'')$ with
  $T''=\mu_{i_1}\cdots\mu_{i_L}(T')=T$ and consequently
  $\beta=\alpha_{p,x}$ for some $p\in\QQ_i$ and $x\in H$.
\end{proof}


\end{document}